\documentclass[11pt]{article}
\usepackage[T1]{fontenc}
\usepackage{lmodern}
\usepackage[margin=1in]{geometry}
\usepackage{amsmath,amssymb,amsthm,mathtools}
\usepackage{microtype}
\usepackage{enumitem}
\usepackage[colorlinks=true,linkcolor=blue,citecolor=blue,urlcolor=blue]{hyperref}
\usepackage{fancyhdr}
\newtheorem{theorem}{Theorem}[section]
\newtheorem{lemma}[theorem]{Lemma}
\newtheorem{proposition}[theorem]{Proposition}
\newtheorem{corollary}[theorem]{Corollary}
\theoremstyle{definition}
\newtheorem{remark}[theorem]{Remark}

\newcommand{\FPD}{\operatorname{FPD}}
\newcommand{\FFD}{\operatorname{FFD}}
\newcommand{\pd}{\operatorname{pd}}
\newcommand{\fd}{\operatorname{fd}}
\newcommand{\wgd}{\operatorname{w.gl.dim}}
\newcommand{\gld}{\operatorname{gl.dim}}
\newcommand{\Ann}{\operatorname{Ann}}
\newcommand{\Jac}{\operatorname{Jac}}
\newcommand{\Spec}{\operatorname{Spec}}
\newcommand{\Max}{\operatorname{Max}}
\newcommand{\depth}{\operatorname{depth}}
\newcommand{\Tor}{\operatorname{Tor}}
\newcommand{\Ext}{\operatorname{Ext}}
\newcommand{\N}{\mathbb N}
\newcommand{\Q}{\mathbb Q}
\newcommand{\F}{\mathbb F}
\newcommand{\R}{\mathbb R}
\newcommand{\ideal}{\ltimes}
\title{Finitistic Dimensions of Formal Power Series Rings}
\author{Xiaolei Zhang \thanks{School of Mathematics and Statistics, Tianshui Normal University, 741001 Tianshui, P. R. China} \and Weiheng Xia \thanks{School of Mathematical Sciences, Capital Normal University, 100048 Beijing, P. R. China}}
\date{}
\begin{document}
	\maketitle
	\vspace{-2.5em}
	\begin{abstract}
		We consider the big finitistic projective and flat dimensions of a commutative ring and of its formal power series ring. When the ring is Noetherian, both invariants increase by one. Coherence of the power series ring does not force either equality to hold. For every \(n\ge0\) and every \(d\) with \(n+1\le d\le\infty\), the flat pair \((n,d)\) can be realized. The finite projective pairs are realized when \(n\ge1\), and also for \((0,1)\). For every \(n\ge0\), one family realizes \((n,\infty)\) for both invariants at the same time. The seed is a zero-dimensional perfect local ring. Its power series ring has countably presented modules whose flat dimension is \(r\) and whose projective dimension lies between \(r\) and \(r+1\), for every \(r\ge1\). The remaining finite projective case is controlled by a dichotomy for perfect coefficient rings. If the Jacobson radical is nilpotent, the power series value is one; if it is not nilpotent, there is a module of projective dimension exactly three. In particular, the pair \((0,2)\) is impossible. For one fixed countable perfect ring, the power series ring has finitistic flat dimension one. Its finitistic projective dimension is \(m+2\) when \(2^{\aleph_0}=\aleph_m\) with \(1\le m<\infty\), and is infinite when \(2^{\aleph_0}>\aleph_\omega\).
	\end{abstract}
	
	\noindent\textit{2020 Mathematics Subject Classification.}  13D05, 13F25.
	
	\noindent\textit{Keywords.} Finitistic dimension; formal power series; coherence; idealization; perfect ring.
	
	\section{Introduction}
	Throughout, all rings are commutative rings with identity, and all modules are unital. We work with all modules, not only finitely generated or finitely presented ones. We write
	\[
	\FPD(T)=\sup\{\pd_T M:\pd_T M<\infty\},\qquad
	\FFD(T)=\sup\{\fd_T M:\fd_T M<\infty\}.
	\]
	We use \(\N_0=\{0,1,2,\ldots\}\) and the convention \(\infty+1=\infty\).
	
	For \(A=R[[x]]\), multiplication by \(x\) is injective and \(A/xA=R\). Hence the regular-element change-of-rings theorem gives
	\begin{equation}\label{intro-lower}
		\FPD(A)\ge\FPD(R)+1,\qquad \FFD(A)\ge\FFD(R)+1.
	\end{equation}
	The reverse inequalities are more delicate. They depend on information that is lost when one passes to the coefficient quotient.
	
	Our first result says that both lower bounds in \eqref{intro-lower} are equalities in the Noetherian case.
	\begin{theorem}\label{main-noeth}
		If \(R\) is Noetherian, then
		\[
		\FPD(R[[x]])=\FPD(R)+1,\qquad
		\FFD(R[[x]])=\FFD(R)+1.
		\]
	\end{theorem}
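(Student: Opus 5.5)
The lower bounds are already in \eqref{intro-lower}, so only the upper bounds $\FPD(A)\le\FPD(R)+1$ and $\FFD(A)\le\FFD(R)+1$ are needed, where $A=R[[x]]$. Since $R$ is Noetherian, so is $A$. I will use the classical descriptions of both invariants for a commutative Noetherian ring $T$. First, by Gruson--Raynaud and Jensen, $\FPD(T)=\dim T$; in particular $\FPD(T)=\infty$ exactly when $\dim T=\infty$. Second, by Auslander--Buchsbaum/Bass, $\FFD(T)=\sup_{\mathfrak p\in\Spec T}\depth T_{\mathfrak p}$. This is the $T$-depth of $T$ in the sense of Bass, and it also equals the small finitistic dimension. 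Both results concern arbitrary modules, as the definitions above require.

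For the projective invariant the argument is then immediate. For Noetherian $R$ one has $\dim R[[x]]=\dim R+1$, including the case $\dim R=\infty$. Hence $\FPD(A)=\dim R+1=\FPD(R)+1$.

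For the flat invariant I must show $\depth A_{\mathfrak P}\le \sup_{\mathfrak p}\depth R_{\mathfrak p}+1$ for every prime $\mathfrak P$ of $A$. Let $\mathfrak p=\mathfrak P\cap R$. The map $R_{\mathfrak p}\to A_{\mathfrak P}$ is a flat local homomorphism with closed fibre $F=A_{\mathfrak P}/\mathfrak pA_{\mathfrak P}$. The depth formula for flat local maps gives
\[
\depth A_{\mathfrak P}=\depth R_{\mathfrak p}+\depth F .
\]
So it suffices to show $\depth F\le \dim F\le 1$.

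Now $F$ is a localization of $A/\mathfrak pA=(R/\mathfrak p)[[x]]$ at a prime lying over $0$ in the domain $D=R/\mathfrak p$. Equivalently, $F$ is a localization of $D[[x]]\otimes_D K$ with $K=\operatorname{Frac}D$. I claim the fibre over the generic point has dimension at most $1$. The quick route is to localize: replace $R$ by $R_{\mathfrak p}$, noting that $A_{\mathfrak P}$ is a localization of $R_{\mathfrak p}[[x]]$ when $x\in\mathfrak P$ is handled suitably. Then one is reduced to the statement that in a Noetherian local ring the fibres of $R\to R[[x]]$ have dimension $\le1$. This follows from $\dim R[[x]]_{\mathfrak Q}\le\dim R_{\mathfrak q}+1$ for $\mathfrak q=\mathfrak Q\cap R$. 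That inequality is the standard bound $\operatorname{ht}\mathfrak Q\le\operatorname{ht}\mathfrak q+\dim(\text{fibre})$ applied to $\mathfrak Q=\mathfrak qA+(x)$ and primes contained in it. It follows from the usual proof of $\dim R[[x]]=\dim R+1$ via a system of parameters of $R_{\mathfrak q}$ together with $x$.

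The main obstacle is exactly this fibre-dimension bound at primes $\mathfrak P$ not containing $x$, or not of the form $\mathfrak qA+(x)$. There, localization does not commute with taking power series, and $\dim F\le 1$ requires an argument. My first attempt would be Matsumura's theorem that $\operatorname{ht}\mathfrak P\le\operatorname{ht}(\mathfrak P\cap R)+1$ for $R\to R[[x]]$ with $R$ Noetherian; that theorem also underlies $\dim R[[x]]=\dim R+1$. Combined with $\dim F=\operatorname{ht}\mathfrak P-\operatorname{ht}\mathfrak p$, which holds by flatness, it gives $\dim F\le1$. Then
\[
\depth A_{\mathfrak P}\le\depth R_{\mathfrak p}+1\le\FFD(R)+1 ,
\]
and taking the supremum over $\mathfrak P$ finishes the proof.
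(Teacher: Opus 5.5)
Your projective half is fine and matches the paper ($\FPD=\dim$ together with $\dim R[[x]]=\dim R+1$). The flat half has a genuine gap, at exactly the step you flagged. The bound $\operatorname{ht}\mathfrak P\le\operatorname{ht}(\mathfrak P\cap R)+1$ holds for polynomial extensions, where the fibres are $k(\mathfrak p)[x]$. It fails for $R\to R[[x]]$ at primes not containing $x$. The standard proof of $\dim R[[x]]=\dim R+1$ avoids such primes, because it only uses the maximal ideals $(\mathfrak m,x)$.

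Here is a counterexample. Let $R=k[y,z]_{(y,z)}$, pick $\zeta\in tk[[t]]$ transcendental over $k(t)$, and let $\mathfrak P$ be the kernel of the surjection $R[[x]]\to k[[t]]$ given by $x\mapsto t$, $y\mapsto t$, $z\mapsto\zeta$. Then $\mathfrak P\cap R=0$ and $x\notin\mathfrak P$. Since $A=R[[x]]$ is a three-dimensional regular local ring and $\dim A/\mathfrak P=1$, we get $\operatorname{ht}\mathfrak P=2$. Hence $\depth A_{\mathfrak P}=2>\depth R_{(0)}+1=1$. So the inequality $\depth A_{\mathfrak P}\le\depth R_{\mathfrak P\cap R}+1$, on which your argument rests, is false, not merely unjustified. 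The theorem survives here only because $\FFD(R)=2$ exceeds $\depth R_{(0)}$.

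The missing idea is a way to avoid comparing primes of $A$ with their contractions. The paper never looks at primes without $x$. Flat dimension is local, so $\FFD(A)=\sup_{\mathfrak M}\FFD(A_{\mathfrak M})$ over maximal ideals, and all of these have the form $(\mathfrak m,x)$. It then uses the local formula: for Noetherian local $T$ of dimension $e$, $\FFD(T)=e$ if $T$ is Cohen--Macaulay and $e-1$ otherwise. For $T=A_{(\mathfrak m,x)}$, $x$ is regular with $T/xT\cong R_{\mathfrak m}$. Thus dimension and depth both rise by one, Cohen--Macaulayness is preserved, and $\FFD(T)=\FFD(R_{\mathfrak m})+1$.

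You can also repair your prime-by-prime approach with the same local input.
\begin{itemize}
\item If $x\in\mathfrak P$, your argument already works, since $A_{\mathfrak P}/xA_{\mathfrak P}\cong R_{\mathfrak p}$.
\item If $x\notin\mathfrak P\subsetneq\mathfrak M=(\mathfrak m,x)$, use $\depth A_{\mathfrak P}\le\operatorname{ht}\mathfrak P\le\operatorname{ht}\mathfrak M-1=\dim R_{\mathfrak m}\le\FFD(R_{\mathfrak m})+1\le\FFD(R)+1$. The third inequality is the lower half $\FFD(V)\ge\dim V-1$ of that dichotomy for Noetherian local $V$.
\end{itemize}

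Separately, your remark that $\sup_{\mathfrak p}\depth T_{\mathfrak p}$ equals the small finitistic dimension is wrong, though you do not use it. For example, $k[[u,v,w]]/(w^2,uw,vw)$ has depth $0$, while its localizations at height-one primes are discrete valuation rings, so its flat invariant is $1$.
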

	The proof uses the classical formulas for big finitistic dimensions. It does not use the finitely generated Auslander--Buchsbaum formula for arbitrary modules. The theorem also covers Noetherian rings of infinite Krull dimension.
	
	Coherence is a natural weaker condition. J{\o}ndrup and Small proved the weak global dimension formula for the case when the power series ring is coherent \cite{JS}. Thus, if \(R[[x]]\) is coherent and \(\wgd R\) is finite, the flat finitistic equality follows. If \(\wgd R\) is not assumed finite, coherence of the target ring alone is not enough.
	\begin{theorem}\label{main-coherent}
		There is a commutative ring \(R\) such that \(R[[x]]\) is coherent and
		\[
		\FFD(R)=0,\qquad \FFD(R[[x]])=2,
		\qquad \FPD(R)=1,\qquad \FPD(R[[x]])\ge3.
		\]
	\end{theorem}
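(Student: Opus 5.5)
We need a ring R with R[[x]] coherent, FFD(R)=0, FPD(R)=1, FFD(R[[x]])=2, FPD(R[[x]])\ge 3. The abstract points to the seed: a zero-dimensional perfect local ring whose Jacobson radical is not nilpotent. Perfect rings have FFD zero (flat modules are projective, and in a zero-dimensional local ring every flat dimension is finite only if 0, by Bass). So I would take \(R\) to be a zero-dimensional local perfect ring with T-nilpotent but non-nilpotent maximal ideal, concretely an idealization or a monoid-type ring such as \(K[\{y_i\}]/(y_iy_j, y_i^{i+1})\)-style construction, chosen so that \(R[[x]]\) is coherent (e.g.\ \(R\) built so that every finitely generated ideal of \(R[[x]]\) is finitely presented; a natural choice is \(R=K\ltimes V\) with \(V\) a suitable infinite-dimensional module, or a valuation-like ring).

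Plan in order. First, compute \(\FFD(R)=0\): over a perfect ring flat modules are projective, and for a zero-dimensional local ring Bass's theorem gives \(\FPD(R)=\) sup of depths \(=0\)... but the statement wants \(\FPD(R)=1\), so \(R\) must be perfect-type with \(\FPD=1\); I would use Bass's characterization: a module of finite projective dimension over a perfect ring relates to vanishing of \(\Tor\) with simples, and construct explicitly a module of projective dimension one (a countably generated nonprojective flat-like quotient is impossible, so instead use that perfect rings with non-nilpotent radical admit modules of pd 1 via a Bass-type telescope) and show no module of pd 2 via \(\Ext\) vanishing on syzygies. Second, establish coherence of \(R[[x]]\) by showing finitely generated ideals are finitely presented, likely reducing via the grading/idealization to coefficientwise annihilator computations which are finitely generated.

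Third, lower bounds for \(A=R[[x]]\): by \eqref{intro-lower} we already get \(\FFD(A)\ge1\), \(\FPD(A)\ge2\). To get one more, I would use the non-nilpotence of the radical: pick \(a_n\in\mathfrak m\) with \(a_1\cdots a_n\neq0\) for all \(n\), and form the element or countably presented module built from \(\sum a_1\cdots a_n x^n\); the quotient by the ideal generated by \(x\)-adic combinations gives a module \(N\) with \(\fd_A N=2\) (checked by computing \(\Tor^A(N,R)\) through the exact sequence for \(x\)) and with \(\pd_A N=3\), the extra one coming from the countable presentation not splitting (a Mittag-Leffler failure, as in the abstract's ``projective dimension between \(r\) and \(r+1\)'' and the ``exactly three'' dichotomy). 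Fourth, the upper bound \(\FFD(A)\le2\): use coherence of \(A\) and a Jøndrup--Small style argument: for flat dimension, reduce to finitely presented modules (flat dimension is detected by \(\Tor\) against finitely presented modules over a coherent ring) and bound via \(A/xA=R\) and \(x\)-adic completeness.

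The main obstacle is the upper bound \(\FFD(A)\le 2\) together with exactness of \(\pd=3\): arbitrary modules over \(A\) need not be \(x\)-adically separated, so the change-of-rings reduction fails. I would first try to split a module \(M\) of finite flat dimension into its \(x\)-torsion and \(x\)-divisible parts, handling \(A[1/x]\)-modules (where \(A[1/x]\) should have small weak dimension given \(R\) perfect) and \(x\)-torsion modules via the Rees lemma separately.
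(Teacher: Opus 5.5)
\textbf{Gap: the proposed choice of $R$ cannot work.} For a commutative ring, $\FPD(R)=0$ holds exactly when $R$ is perfect (Bass). Directly: a local perfect ring has a nonzero $s$ with $s\mathfrak m=0$, and the minimal-resolution argument in Lemma~\ref{infinite-perfect} makes every module of finite projective dimension projective. So no perfect ring has $\FPD(R)=1$. You notice this (``Bass's theorem gives $\FPD(R)=0$'') but resolve it with a false claim, that a ``Bass-type telescope'' gives a module of projective dimension one. In a perfect local ring every element of $\mathfrak m$ is nilpotent, so $R_c=0$ for every $c\in\mathfrak m$, and the telescope produces nothing. The concrete fallback $K\ideal V$, with $V$ an infinite-dimensional vector space, also fails. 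It is perfect with nilpotent radical, so Theorem~\ref{main-perfect}(i) gives $\FFD(R[[x]])=\FPD(R[[x]])=1$. It is not coherent, since $\Ann(0,v)=\mathfrak m$ is not finitely generated. The perfect seed in the abstract serves the infinite-value and $(0,d)$ results, and Remark~\ref{infinite-structure} notes that those rings are not coherent.

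\textbf{No mechanism for coherence or $\FFD(R[[x]])\le2$.} For a perfect local ring that is not a field, $\wgd R=\infty$, so no J{\o}ndrup--Small type formula applies. The proposed torsion/divisible splitting is not an argument. Your witness ``built from $\sum a_1\cdots a_nx^n$'' is also unsupported. Telescopes of this kind over perfect coefficients give flat dimension one (Lemma~\ref{bass-mono}), and Theorem~\ref{main-cardinal} exhibits a perfect $R$ with $\FFD(R[[x]])=1$.

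\textbf{The missing idea.} The paper places coherence and finite weak dimension in a domain and passes to an idealization. It takes $R=V\ideal V/tV$, with $V$ the valuation ring of the full Hahn field $\Q((t^{\R}))$.
\begin{enumerate}[nosep]
\item Spherical completeness makes $A=V[[x]]$ coherent (Wang), and J{\o}ndrup--Small gives $\wgd A=2$.
\item $R[[x]]\cong A\ideal A/tA$ is coherent because it is finitely presented over $A$.
\item Theorem~\ref{idealization} identifies modules of finite flat or projective dimension over $R$ and $R[[x]]$ with $t$-regular modules over $V$ and $A$. Since $t$-regular $V$-modules are torsion-free, this gives $\FFD(R)=0$ (torsion-free means flat), $\FPD(R)\le1$ (torsion-free $V$-modules have projective dimension at most one), and $\FFD(R[[x]])\le2$.
\item $\FPD(R)=1$ because $R_t=K$ is not free over the local ring $R$.
\item For the lower bounds, explicit Hahn-coefficient series $f,h$ form an $A$-regular sequence with $(f,h)A_t$ proper. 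Then $N=A_t/(f,h)$ has flat dimension $2$.
\item For $C=A/(f,h)$ one has $\pd_C C_t=1$, and two applications of Lemma~\ref{shift} give $\pd_A N=3$, which transfers to $R[[x]]$.
\end{enumerate}
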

	Take \(R=V\ideal V/tV\), where \(V\) is the valuation ring of the full Hahn field \(\Q((t^{\R}))\). Wang's coherence theorem gives the structural input \cite{YWang}. The lower bounds come from two explicitly defined series with Hahn coefficients. We do not claim that the projective lower bound is exact.
	
	Next we consider prescribed pairs, including infinite power series values.
	\begin{theorem}\label{main-pairs}
		Let \(n\in\N_0\) and \(d\in\N\cup\{\infty\}\) satisfy \(d\ge n+1\).
		\begin{enumerate}[label=\textup{(\roman*)},nosep]
			\item There is a commutative ring \(S_{n,d}\) with
			\[
			\FFD(S_{n,d})=n,\qquad \FFD(S_{n,d}[[x]])=d.
			\]
			\item If \(n\ge1\), or if \((n,d)=(0,1)\), or if \(d=\infty\), there is a commutative ring \(R_{n,d}\) with
			\[
			\FPD(R_{n,d})=n,\qquad \FPD(R_{n,d}[[x]])=d.
			\]
			\item When \(d=\infty\), the same ring \(R_{n,\infty}=S_{n,\infty}\) can be chosen so that
			\[
			\FFD(R_{n,\infty})=\FPD(R_{n,\infty})=n,\qquad
			\FFD(R_{n,\infty}[[x]])=\FPD(R_{n,\infty}[[x]])=\infty.
			\]
		\end{enumerate}
	\end{theorem}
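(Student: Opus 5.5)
The construction starts from a seed ring and then shifts its dimensions. The abstract describes the seed: a zero-dimensional perfect local ring \(P\) (for instance a countable perfect local ring with non-nilpotent \(T\)-nilpotent maximal ideal). Over \(P\) every flat module is projective, so \(\FFD(P)=\FPD(P)=0\). The key claim is that for every \(r\ge1\) the ring \(P[[x]]\) has a countably presented module \(M_r\) with \(\fd M_r=r\) and \(r\le\pd M_r\le r+1\). Taking the supremum over \(r\) then gives \(\FFD(P[[x]])=\FPD(P[[x]])=\infty\). This settles \(n=0\), \(d=\infty\) for both invariants simultaneously, and already covers case (iii) at \(n=0\).

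To produce \(M_r\), I would first build \(M_1\) by hand. Choose elements \(a_1,a_2,\dots\) of the maximal ideal with \(a_1\cdots a_k\neq0\) for all \(k\); these exist because the radical is not nilpotent. Form a telescope-type series such as \(\sum_k a_1\cdots a_k x^k\), or a direct limit of copies of \(P[[x]]\) along multiplication by \(x-a_k\), so that \(M_1\) is a countable direct limit of free modules that is not projective. Then \(\fd M_1=1\), and \(\pd M_1\le 2\) follows from the countable presentation. Higher \(r\) would come from tensoring with a regular sequence in extra variables, or from iterated syzygy or extension tricks using Tor-rigidity over \(P[[x]]\). Finiteness of \(\fd M_r\) must be kept under control here, and this step is the main obstacle. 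The difficulty is that \(P[[x]]\) is not coherent, so flat dimension cannot be computed from finitely presented pieces. I would try to compute \(\Tor\) against \(P[[x]]/(x)=P\) and against the residue field, using \(x\)-regularity, and to show that the modules involved have flat dimension exactly \(r\) via the change-of-rings spectral sequence.

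For general \(n\), replace \(P\) by \(R_{n,\infty}=P[[y_1,\dots,y_n]]\), or by a polynomial extension of \(P\). Adjoining \(n\) regular variables raises both finitistic dimensions of the base from \(0\) to exactly \(n\). The upper bound \(\le n\) is the main point; I would get it by a Noetherian-style argument relative to \(P\), using that \(P\) is perfect and that the dimensions are computed from the regular sequence \(y_1,\dots,y_n\) by \eqref{intro-lower} together with its reverse for polynomial or power series extensions of perfect rings. The module \(M_r\), extended along the flat map to \(R_{n,\infty}[[x]]\), still has finite but unbounded flat and projective dimension, which gives (iii) and the \(d=\infty\) parts of (i) and (ii).

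For finite \(d\ge n+1\), I would use idealization or products. For flat dimension, take \(S_{n,d}\) to be a product, or idealization, of a Noetherian ring of finitistic dimension \(n\) and an auxiliary ring. By Theorem~\ref{main-noeth}, a Noetherian ring \(R\) with \(\FFD(R)=k\) gives \(\FFD(R[[x]])=k+1\). This already realizes \(d=n+1\). To get \(d>n+1\), take \(S=R\times Q\) with \(R\) Noetherian of finitistic dimension \(n\) and \(Q\) a ring whose power series ring has \(\FFD=d\) while \(\FFD(Q)\le n\); products satisfy \(\FFD\) equal to the maximum of the two factors, and \((R\times Q)[[x]]=R[[x]]\times Q[[x]]\). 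Suitable \(Q\) come from the seed \(P\) truncated in a controlled way, for instance a perfect ring whose radical has prescribed nilpotency behaviour relative to the variables, so that \(M_r\) exists only up to a fixed \(r\). The projective pairs with \(n\ge1\) would be built the same way, with \((0,1)\) given by a field. The exclusion of \((0,d)\) for \(1<d<\infty\) in the projective case is consistent with the dichotomy described in the abstract, so no construction is attempted there.
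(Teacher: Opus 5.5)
Your proposal has genuine gaps, starting with the infinite case. You state the key claim for an arbitrary countable perfect local ring $P$ with nonnilpotent $T$-nilpotent radical, and in that generality it is false. The ring $k\cdot1+\bigoplus_i t_ik[t_i]/(t_i^{i+1})$ of Theorem~\ref{main-cardinal} is such a ring, yet its power series ring has $\FFD=1$. So the seed has to be designed for the purpose, and the construction of $M_r$, which you leave as ``the main obstacle'', is exactly the missing step.

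Your sketch of $M_1$ fails twice. First, a sequence $a_1,a_2,\dots$ in the radical with every $a_1\cdots a_k\ne0$ is precisely what $T$-nilpotence forbids. Nonnilpotence only gives, for each $k$, some nonzero product of $k$ elements; this is why Lemma~\ref{bass-mono} spreads doubly indexed elements $a_{j,n}\in I_n$ over distinct $x$-degrees. Second, a direct limit of free modules is flat, so it has flat dimension $0$, not $1$.

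The paper's seed is $B_\infty=k\cdot1+\bigoplus_i\mathfrak m_i$, built from blocks $C_i=k[t_i,y_{1,i},\dots,y_{i,i}]/(t_i,y_{1,i},\dots,y_{i,i})^{i+1}$ with maximal ideals $\mathfrak m_i$. The essential feature is that the number of variables per block grows.
\begin{itemize}
\item With $A_\infty=B_\infty[[x]]$ and $f=\sum_it_ix^i$, one has $f\prod_iC_i[[x]]\subseteq A_\infty$, hence $(A_\infty)_f=\bigl(\prod_iC_i[[x]]\bigr)_f\ne0$.
\item In this localization, the elements $b_j=(y_{j,i})_{i\ge j}$ satisfy the uniform bound $f^j\Ann_{E_j}(b_j)=0$ on $E_j=\bigl(\prod_iC_i[[x]]\bigr)/(b_1,\dots,b_{j-1})$.
\item Hence $b_1,\dots,b_r$ is a regular sequence generating a proper ideal for every $r$. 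Its Koszul quotient has $\fd=r$ and $r\le\pd\le r+1$ over $A_\infty$ by Lemma~\ref{localization}.
\end{itemize}

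Your passage to general $n$ fails on its own terms. Your key claim gives $\FFD(P[[y_1]])=\infty$, and Theorem~\ref{main-perfect}(ii) gives $\FPD(P[[y_1]])\ge3$. By \eqref{intro-lower}, the ring $P[[y_1,\dots,y_n]]$ then has $\FFD=\infty$ and $\FPD\ge n+2$, not $n$. The reverse of \eqref{intro-lower} over perfect rings, which you invoke, is exactly what fails, and you give no argument for the polynomial variant. The paper instead uses the finite product $B_\infty\times k[z_1,\dots,z_n]$ and \eqref{product}.

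For finite $d$, your product framework $R\times Q$ agrees with the paper, but you never construct $Q$. A truncated perfect ring is a poor candidate for the projective pairs. By Theorem~\ref{main-perfect}, a perfect $Q$ has $\FPD(Q[[x]])=1$ or $\FPD(Q[[x]])\ge3$. Theorem~\ref{main-cardinal} shows that already with one variable per block the finite value depends on $2^{\aleph_0}$, so there is no evident ZFC route to a prescribed value. For flat values, the upper bound $\FFD(Q[[x]])\le d$ would be the hard step, and you give no argument for it.

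The paper's factor is the coefficient-filtered ring $T_r$ of Tran, Vo and Nguyen, whose power series ring becomes regular Noetherian of dimension $r$ after inverting $c$. Proposition~\ref{seed-values} gives $\FPD(T_r)=\FFD(T_r)=1$, $\FFD(T_r[[x]])=r$ and $\FPD(T_r[[x]])=r+1$. So $R_{n,d}=T_{d-1}\times k[z_1,\dots,z_n]$ works for $d\ge n+2$. The coefficient value $1$ of $T_r$ is why this construction needs $n\ge1$. For the flat pairs, the paper lowers the coefficient value to $0$ by the idealization $T_r\ideal T_r/cT_r$ (Theorem~\ref{idealization}, Corollary~\ref{ideal-bounds}, Proposition~\ref{zero-flat-seed}), which keeps the value $r$ for its power series ring. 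It then takes $S_{n,d}=(T_d\ideal T_d/cT_d)\times k[z_1,\dots,z_n]$.
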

	The finite construction is based on the coefficient filtration of Tran, Vo and Nguyen \cite{TVN}. We separate their structural theorem from the homological computations below. A cyclic idealization gives the finite flat seed of value zero. For infinite target values, we use a perfect local algebra formed by adding a common identity to a direct sum of radicals of truncated polynomial algebras. A series supported in successive blocks produces a localization that has regular sequences of every finite length. The uniform annihilator computation and a separate finite projective upper bound give both infinite finitistic values. Finally, finite products prescribe the coefficient value in both constructions.
	
	The finite projective case with coefficient value zero has a different answer. By Bass's theorem, a commutative ring satisfies \(\FPD(R)=0\) exactly when it is perfect \cite{Bass,BM}. A perfect commutative ring is a finite product of local rings whose maximal ideals are \(T\)-nilpotent. Recall that an ideal \(I\) is \(T\)-nilpotent if every sequence \(a_1,a_2,\ldots\in I\) has a zero initial product.
	\begin{theorem}\label{main-perfect}
		Let \(R\) be a nonzero commutative perfect ring.
		\begin{enumerate}[label=\textup{(\roman*)},nosep]
			\item If \(\Jac(R)\) is nilpotent, then \(\FPD(R[[x]])=\FFD(R[[x]])=1\).
			\item If \(\Jac(R)\) is not nilpotent, there is an \(R[[x]]\)-module of projective dimension exactly three. Consequently \(\FPD(R[[x]])\ge3\).
		\end{enumerate}
	\end{theorem}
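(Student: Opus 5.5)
Both parts start from the same reduction. A perfect ring is a finite product \(R=\prod_i R_i\) of local rings with \(T\)-nilpotent maximal ideals. Since \(R[[x]]=\prod_i R_i[[x]]\), and both \(\FPD\) and \(\FFD\) of a finite product are the maximum over the factors, I may assume \(R\) is local with maximal ideal \(\mathfrak m\). Note that \(\Jac(R)\) is nilpotent exactly when every \(\mathfrak m_i\) is nilpotent. The lower bound \(\ge1\) comes from \eqref{intro-lower}, since \(\FPD(R)=\FFD(R)=0\) by Bass's theorem. Then \(A=R[[x]]\) is local with maximal ideal \(\mathfrak m+xA\).

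\textbf{Part (i).} Assume \(\mathfrak m^k=0\). I would first show that every \(A\)-module \(M\) of finite flat dimension has \(\fd_A M\le 1\), and then that every module of finite projective dimension has \(\pd_A M\le1\). The key observation is that \(\mathfrak m A=\mathfrak m[[x]]\) is nilpotent, because \((\mathfrak m[[x]])^k\subseteq \mathfrak m^k[[x]]=0\). Hence \(A/\mathfrak mA=(R/\mathfrak m)[[x]]\) is a DVR, and \(A\) is a one-dimensional local ring with nilpotent ideal \(N=\mathfrak m[[x]]\) and \(A/N\) regular of dimension one.

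The plan for the flat bound is to reduce modulo \(x\). The element \(x\) is a nonzerodivisor on \(A\), so for a flat \(A\)-module \(F\) it is a nonzerodivisor on \(F\) as well. Take \(M\) with \(\fd_A M<\infty\) and let \(K\) be its first syzygy. Then \(K\) is \(x\)-torsion-free, being a submodule of a flat module. I claim \(K/xK\) has finite flat dimension over \(R=A/xA\), hence is flat by \(\FFD(R)=0\). This uses the standard change of rings for a regular element acting regularly on the module: \(\fd_{A/x}(K/xK)\le \fd_A K\).

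Next I need a flatness criterion: if \(x\) is regular on \(K\) and \(K/xK\) is \(R\)-flat, then \(K\) is \(A\)-flat. This is a local flatness criterion, which is valid when \(K\) is \(x\)-adically ideal-separated. It fails for arbitrary modules, so this is the main obstacle, and nilpotence of \(\mathfrak m\) is exactly what should rescue it. The alternative route I would try first is Bass-type. Over \(A\), \(\Tor^A_i(A/(\mathfrak m+xA),-)\) detects flat dimension, since the maximal ideal is T-nilpotent modulo \(x\)-adic issues. Filtering by the nilpotent ideal \(N\) reduces the question to modules over the DVR \(A/N\), whose weak global dimension is \(1\). Concretely, for \(\fd\) I would show that \(\Tor^A_2(A/I,M)=0\) for all ideals \(I\), by dévissage along \(N^j/N^{j+1}\).

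For \(\pd\), I would use the finitistic formula \(\FPD(A)\le\) the Krull dimension when \(A\) is "locally perfect modulo a regular element" (Raynaud–Gruson–Jensen style). Here \(\dim A=1\), and \(A\) is \(\mathfrak m A\)-nilpotent over a Noetherian quotient, so I would invoke the Gruson–Raynaud bound \(\FPD\le\dim\) for rings that are Noetherian modulo a nilpotent ideal. This step I expect to need a direct argument in the paper's style, lifting projective-dimension bounds along the nilpotent ideal \(N\).

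\textbf{Part (ii).} Suppose \(\mathfrak m\) is T-nilpotent but not nilpotent. Choose \(a_j\in\mathfrak m\) such that the partial products \(a_1\cdots a_j\) are nonzero. I would form the power series \(f=\sum_j a_1\cdots a_j x^j\), or a suitably spread-out series, and consider a module built from \(A/fA\) or from \(A\)-modules such as \(A[f^{-1}]\)-like countable colimits. A countably presented flat module of the form \(\varinjlim (A\xrightarrow{g}A\to\cdots)\) has \(\pd\le1\); combined with a regular element \(x\) and one more step, it yields projective dimension \(\le3\). Exactness (\(\ge3\)) would come from a non-vanishing \(\Ext^3\), computed via the Milnor \(\lim^1\) sequence. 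The nonzero partial products guarantee that \(\lim^1\) does not vanish, and T-nilpotence ensures that the relevant Mittag-Leffler condition fails in exactly one degree. This construction is the hard part, and I would model it on the Hahn-series lower bound used in Theorem \ref{main-coherent}.
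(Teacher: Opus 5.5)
The reduction to a local factor is fine, but Part~(ii) starts from a choice that cannot be made. Since \(\mathfrak m\) is \(T\)-nilpotent, every sequence \(a_1,a_2,\ldots\in\mathfrak m\) has a zero initial product. A sequence with all \(a_1\cdots a_j\neq0\) exists only when \(R\) is \emph{not} perfect (it is Bass's sequence). Non-nilpotence only gives, for each \(n\), a nonzero \(n\)-fold product whose factors depend on \(n\).

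The missing idea is Lemma~\ref{separation}: finitely generated ideals \(I_n\subseteq\mathfrak m\) with \(I_nI_m=0\) for \(n\neq m\) and \(I_n^{n+1}\neq0\). Doubly indexed elements \(a_{j,n}\in I_n\) then give two families of series. The series \(f_j=\sum_{n\ge j}a_{j,n}x^n\) have a telescope \(L\) that is a flat \(A_x\)-module. The series \(g_j=\sum_{n\ge j}a_{j,n}x^{n^2+n}\) define an injective endomorphism \(\gamma\) of \(L\) whose cokernel \(H\) has projective dimension \(2\) (Lemmas~\ref{bass-mono} and~\ref{third-ext}). Mutual annihilation of the \(I_n\) is what makes these identities and the injectivity work.

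The passage to \(3\) is not a countable \(\lim^1\) computation either. The paper lets \(E=B_*^{\N_{>0}}/B_*^{(\N_{>0})}\) act on \(H\); the functionals \(\ell_n\) need the injective-cogenerator property of a field or of \(\mathbb Z/p^e\mathbb Z\), which is why the characteristic is analyzed. It then takes \(\mathfrak c\) orthogonal idempotents from an almost disjoint family and forms \(V=H\otimes_E(E/P)\). Finally it shows \(\Ext^3_A(V,W)\neq0\) by comparing \(2^{\mathfrak c}\) partitions with \(\mathfrak c\) possible separators. None of these ingredients appears in your sketch, so (ii) is not proved.

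In Part~(i), the criterion ``\(x\) regular on \(K\) and \(K/xK\) flat over \(R\) imply \(K\) flat'' is false for this very \(A\), and nilpotence of \(\mathfrak m\) does not rescue it. If \(\mathfrak m\neq0\), the residue field \((R/\mathfrak m)((x))\) of \(A_x\) is \(x\)-regular with zero reduction but is not flat. You must also control \(K_x\). Since \(\mathfrak m[[x]]\) is nilpotent, \(A_x\) is a nilpotent extension of \((R/\mathfrak m)((x))\), hence perfect, so \(K_x\) is \(A_x\)-flat. Lemma~\ref{flat-gluing} (that is, Theorem~\ref{flat-formula}) then gives \(\FFD(A)=\max\{1,\FFD(A_x)\}=1\).

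Your Bass-type alternative does not repair this. The functor \(\Tor^A_{\ast}(A/(\mathfrak m+xA),-)\) vanishes on every \(A_x\)-module, so it cannot detect flat dimension. The d\'evissage to \(A/N\)-modules turns \(\wgd(A/N)=1\) into a statement about \(\Tor^A\) only if \(\Tor^A_i(A/N,M)=0\) for \(i>0\), which you do not establish.

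For the projective bound, there is no quotable theorem \(\FPD\le\dim\) for rings that are Noetherian modulo a nilpotent ideal. Here \(A\) is not Noetherian when \(\mathfrak m\) is not finitely generated, and such a statement is essentially what is being proved. Even a bound \(\pd\le1\) for flat modules together with \(\FFD(A)=1\) would give only \(\FPD(A)\le2\). The paper obtains \(1\) from the Bazzoni--Positselski implication \eqref{almost-perfect}, \(\fd\le1\Rightarrow\pd\le1\). It applies because \(A_x\) and every \(A/x^jA=R[x]/(x^j)\) are perfect.
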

	The second assertion is proved by separating mutually annihilating ideals, constructing an injective endomorphism of a flat Bass module, and then detecting a third Ext group. The argument also works in mixed characteristic. It rules out \((0,2)\) in the projective realization problem.
	
	Finally, a fixed countable perfect ring gives exact values under the stated cardinal assumptions.
	\begin{theorem}\label{main-cardinal}
		There is a fixed countable commutative perfect ring \(R\) such that
		\[
		\FFD(R[[x]])=1,\qquad
		\FPD(R[[x]])=
		\begin{cases}
			m+2,&2^{\aleph_0}=\aleph_m,\quad 1\le m<\infty,\\
			\infty,&2^{\aleph_0}>\aleph_\omega.
		\end{cases}
		\]
	\end{theorem}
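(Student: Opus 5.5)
We will look for a countable perfect local ring \(R\) whose Jacobson radical is \(T\)-nilpotent but not nilpotent, and for which the relevant modules over \(A=R[[x]]\) are governed by countable flat modules. The natural candidate is the perfect local algebra used for Theorem~\ref{main-pairs}: over a countable field \(k\) (say \(k=\F_2\) or \(\Q\)), take \(\mathfrak m=\bigoplus_{i\ge1}\mathfrak n_i\), where \(\mathfrak n_i\) is the radical of \(k[y]/(y^{i})\), and set \(R=k\oplus\mathfrak m\) with a common identity. Then \(R\) is countable, \(\mathfrak m\) is \(T\)-nilpotent, and \(\mathfrak m\) is not nilpotent. The plan has three parts: an upper bound \(\FFD(A)=1\), a reduction of projective dimension to flat Mittag-Leffler-type data of countable or \(\aleph_1\)-generated flat modules, and a set-theoretic computation of that projective dimension.

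\textbf{Step 1 (the flat value).} The lower bound \(\FFD(A)\ge1\) comes from \eqref{intro-lower}, since \(\FFD(R)=0\) for perfect \(R\). For the upper bound, let \(M\) have finite flat dimension. We reduce modulo \(x\) by the change-of-rings long exact sequence for \(\Tor\). Since \(R\) is perfect, every flat \(R\)-module is projective, and the \(\Tor^A_i(M,k)\) computations reduce to \(\Tor^R\) over a perfect ring. We then show that a second syzygy-type module \(\Omega^1M\) which is flat modulo \(x\) and \(x\)-torsion-free is flat. For this we use the explicit block structure: every element of \(\mathfrak m[[x]]\) lies in a finite sum of blocks only up to \(x\)-adic limits. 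We expect this to match the ``uniform annihilator computation'' referred to in the introduction, so Step~1 would cite the same lemma used for Theorem~\ref{main-pairs}.

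\textbf{Step 2 (reduction to flat modules).} Theorem~\ref{main-perfect}(ii) already produces a module of projective dimension \(3\) via an injective endomorphism of a flat Bass module \(B=\varinjlim(A\xrightarrow{a_1}A\xrightarrow{a_2}\cdots)\), with \(a_i\in\mathfrak m[[x]]\) chosen blockwise. The key observation is that any module of finite projective dimension has finite flat dimension, hence flat dimension at most \(1\) by Step~1. So its first syzygy \(F\) is flat, and \(\FPD(A)=1+\sup\{\pd_A F: F\text{ flat}, \pd_A F<\infty\}\). Since \(A\) has cardinality \(2^{\aleph_0}\), the Jensen and Gruson--Raynaud bound \(\pd F\le n+1\) for \(\aleph_n\)-generated flat modules (over a ring with \(|A|\le\aleph_m\), flat modules have \(\pd\le m+1\) by Kaplansky dévissage) gives \(\FPD(A)\le m+2\) when \(2^{\aleph_0}=\aleph_m\). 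This is the upper bound.

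\textbf{Step 3 (the lower bound, the main obstacle).} We must exhibit a flat \(A\)-module of projective dimension exactly \(m+1\) when \(2^{\aleph_0}=\aleph_m\), and of arbitrarily large finite projective dimension when \(2^{\aleph_0}>\aleph_\omega\). Our plan is the Osofsky/Baer-type construction. First, find a family of \(2^{\aleph_0}\) independent Bass-type directed systems indexed by branches of the binary tree, using that \(\mathfrak m[[x]]\) has continuum many ``independent'' non-nilpotent sequences supported in disjoint infinite sets of blocks. Second, form the flat module \(F=\varinjlim\) over the directed set of finite subsets of \(\aleph_n\), with \(\aleph_n\le 2^{\aleph_0}\). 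Third, prove \(\pd F=n+1\) by an Osofsky-style induction on \(n\), computing \(\Ext^{n+1}(F,-)\) through a Mitchell/Roos \(\varprojlim^{(n+1)}\) of a nonvanishing inverse system. Nonvanishing requires that the transition maps fail the Mittag-Leffler condition uniformly, which is exactly where non-nilpotence of \(\mathfrak m\) (witnessed by products \(a_1\cdots a_j\neq0\) for all \(j\)) enters. When \(2^{\aleph_0}>\aleph_\omega\), every \(n\) is available, giving \(\FPD(A)=\infty\). Verifying that \(\varprojlim^{(n+1)}\neq0\) over this specific ring is the hard step. We would first try to transfer the known \(\aleph_n\)-indexed examples over a valuation-like ring via a ring map \(A\to\) (a localization of \(A\) at the block series). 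That localization has regular sequences of every length and a value group of rank one with continuum cofinal structure.
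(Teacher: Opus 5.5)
Your Step~2 matches the paper's upper bound: $R$ countable gives $|A|=\mathfrak c$, Simson's theorem bounds the projective dimension of flat $A$-modules by $m+1$ when $\mathfrak c=\aleph_m$, and $\FFD(A)=1$ makes first syzygies flat. But there are genuine gaps in Steps~1 and~3.

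\textbf{Step 1.} It does not prove $\FFD(A)=1$. By Theorem~\ref{flat-formula}, the entire content is $\FFD(A_x)\le1$. Modules over $D=A_x$ vanish modulo $x$, so reduction modulo $x$ says nothing about them. Your criterion ``flat modulo $x$ and $x$-torsion-free implies flat'' drops the localization hypothesis of Lemma~\ref{flat-gluing}, and it fails for this very ring. If $t\in\mathfrak m$ is nonzero with $t^2=0$, then $D/tD$ is $x$-torsion-free and zero modulo $x$. It is not flat, because a flat cyclic module $D/I$ forces $I=I^2$.

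The ``uniform annihilator computation'' you want to cite belongs to the many-variable seed $B_\infty$ of Theorem~\ref{main-pairs}, not to the one-variable ring you wrote down. There it proves regularity of long sequences and yields $\FFD(B_\infty[[x]])=\infty$. It is a lower-bound device and cannot give an upper bound.

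The paper needs new input here:
\begin{enumerate}[label=\textup{(\alph*)},nosep]
\item $P=\prod_iC_i[[x]]=U/\delta U$ with $U=\prod_ik[[t_i,x]]$ coherent of weak dimension at most two, so $\FFD(P)=1$ and $\FFD(P_x)\le1$.
\item The Cartesian square $D=Q\times_{Q/J}k((x))$, with $Q=P_x$ and $J=\mathfrak m[[x]][1/x]$.
\item Tor vanishing: $\Tor_j^D(k((x)),M)=\Tor_j^D(Q,M)=0$ for $D$-modules of finite flat dimension, obtained from $t_1D\cong k((x))$ and $\Ann_D(t_1)=J$.
\item A flat-dimension bound for such pullbacks, which together with (a)--(c) gives $\FFD(D)\le1$.
\end{enumerate}

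\textbf{Step 3.} This is a research plan: you leave the decisive nonvanishing open, and the features you intend to use are absent for this ring. Suppose a localization of $A$ had a proper regular sequence of length $r\ge2$. Its Koszul quotient would then have flat dimension $r$ over $A$ by Lemma~\ref{localization}, contradicting $\FFD(A)=1$. Long regular sequences again come from the many-variable seed, and there is no rank-one value group here.

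The target is also off by one. A flat module of projective dimension $m+1$ only gives $\FPD(A)\ge m+1$. Your identity $\FPD(A)=1+\sup\{\pd_AF\}$ holds in general only as ``$\le$''. For example, over a von Neumann regular ring of finite positive global dimension every module is flat, so $\FPD$ equals that supremum.

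The missing idea is to use idempotents instead of derived limits:
\begin{enumerate}[label=\textup{(\arabic*)},nosep]
\item With $f=\sum_it_ix^i$ one has $fP\subseteq A$, so $U=A_f=P_f$.
\item The element $g=(x^{i^2})_i$ is regular on $U$. In $V=U/gU$ the characteristic idempotent $e_S$ vanishes exactly when $S$ is finite, so $\mathcal P(\N)/\mathrm{Fin}$ embeds in the idempotents of $V$.
\item An independent family of size $\mathfrak c$ and Osofsky's theorem give, whenever $\aleph_m\le\mathfrak c$, an ideal $I_m$ with $\pd_VI_m=m$. Hence $\pd_V(V/I_m)=m+1$.
\item Lemma~\ref{shift} applied to $g$ gives projective dimension $m+2$ over $U$. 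Lemma~\ref{localization} then gives $\pd_A(V/I_m)\ge m+2$.
\end{enumerate}
Combined with the Simson bound, this settles both cases of the theorem.
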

	
	\section{Regular elements and idealizations}
	We collect the homological arguments that will be used in the constructions. A regular element is a nonzerodivisor of the ring; regularity on a module is specified separately.
	
	\begin{lemma}\label{shift}
		Let \(c\) be a regular nonunit of \(T\), put \(\overline T=T/cT\), and let \(M\ne0\) be an \(\overline T\)-module. If the dimension over \(\overline T\) is finite, then
		\[
		\pd_T M=\pd_{\overline T}M+1,\qquad
		\fd_T M=\fd_{\overline T}M+1,
		\]
		respectively.
	\end{lemma}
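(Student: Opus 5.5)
This is the classical first change-of-rings theorem (Kaplansky), proved for arbitrary modules. The plan is to obtain the inequality \(\pd_T M\le \pd_{\overline T}M+1\) by induction along an \(\overline T\)-projective resolution, and then to get equality by comparing Ext groups via a long exact sequence. The flat case runs in parallel with Tor in place of Ext.

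\emph{Step 1: \(\pd_T\overline T=1\) and \(\fd_T\overline T=1\).} Since \(c\) is regular, \(0\to T\xrightarrow{c}T\to\overline T\to0\) is a projective resolution, so \(\pd_T\overline T\le1\). Equality holds: otherwise \(\overline T\) would be \(T\)-projective, hence flat. Tensoring the sequence with \(\overline T\) gives \(\Tor_1^T(\overline T,\overline T)\cong\ker(\overline T\xrightarrow{c}\overline T)=\overline T\neq0\), using that \(c\) is a nonunit. The same computation gives \(\fd_T\overline T=1\).

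Consequently every free (hence every projective) \(\overline T\)-module has \(\pd_T\le1\) and \(\fd_T\le1\), and for nonzero free \(\overline T\)-modules both dimensions equal \(1\). Induction on \(n=\pd_{\overline T}M\), using a short exact sequence \(0\to K\to P\to M\to0\) with \(P\) projective over \(\overline T\), then gives \(\pd_T M\le n+1\). The same argument gives \(\fd_T M\le\fd_{\overline T}M+1\).

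\emph{Step 2: the reverse inequality.} This is the main point. Let \(n=\pd_{\overline T}M<\infty\) and choose an \(\overline T\)-module \(N\) with \(\Ext^n_{\overline T}(M,N)\neq0\). I use the standard isomorphism
\[
\Ext^{k+1}_T(M,N)\cong\Ext^k_{\overline T}(M,N)
\]
for \(\overline T\)-modules \(M,N\) with \(cN=0\), valid for all \(k\ge0\) (Rees' lemma). Taking \(k=n\) gives \(\Ext^{n+1}_T(M,N)\neq0\), hence \(\pd_T M\ge n+1\).

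To prove the isomorphism in the needed range I argue by dimension shifting. For \(k=0\), apply \(\operatorname{Hom}_T(-,N)\) to \(0\to T\xrightarrow{c}T\to\overline T\to0\). Since \(cN=0\), this gives \(\Ext^1_T(\overline T,N)\cong N\) and \(\Ext^i_T(\overline T,N)=0\) for \(i\ge2\). So for free \(\overline T\)-modules \(F\) we get \(\Ext^1_T(F,N)\cong\operatorname{Hom}_{\overline T}(F,N)\) naturally, and \(\Ext^{\ge2}_T(F,N)=0\). Products are fine here because \(\Ext\) converts direct sums in the first variable into products. Naturality then gives \(\Ext^1_T(M,N)\cong\operatorname{Hom}_{\overline T}(M,N)\) via a free presentation of \(M\), and the general case follows by shifting along \(0\to K\to F\to M\to0\) over \(\overline T\), comparing the two long exact sequences.

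\emph{Step 3: the flat case.} Dually, I use \(\Tor^T_{k+1}(M,N)\cong\Tor^{\overline T}_k(M,N)\) for \(\overline T\)-modules \(M\) and \(N\), obtained the same way from \(\Tor^T_1(\overline T,N)\cong N\) and \(\Tor^T_{\ge2}(\overline T,N)=0\). Here the modules may be arbitrary and Tor commutes with direct sums. If \(\fd_{\overline T}M=n\), I choose \(N\) with \(\Tor^{\overline T}_n(M,N)\neq0\), and the isomorphism gives \(\fd_T M\ge n+1\).

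The main obstacle is making the naturality in the dimension-shifting step rigorous for infinitely generated modules, and in particular the case \(n=0\). There \(M\) is a nonzero projective \(\overline T\)-module and we need \(\Ext^1_T(M,N)\neq0\) for some \(N\). I would take \(N=M\), since \(\operatorname{Hom}_{\overline T}(M,M)\neq0\). In the flat case with \(n=0\), I would take \(N=\overline T\), giving \(\Tor^T_1(M,\overline T)=M\neq0\) because \(cM=0\).
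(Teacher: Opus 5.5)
\textbf{There is a genuine gap in the lower bound.} Your projective upper bound and your two cases $n=0$ are fine. The lower bound, however, rests on an isomorphism that is false.

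\textbf{The false isomorphism.} It is not true that $\Ext^{k+1}_T(M,N)\cong\Ext^k_{\overline T}(M,N)$ for all $k\ge0$ and all $\overline T$-modules $M,N$. Already the case $k=0$, which you obtain ``via a free presentation of $M$'', fails. Take $T=\Q[x,y]$, $c=x$, $\overline T=\Q[y]$, and let $M=N=\Q$ be the residue field at $(x,y)$. The Koszul complex gives $\Ext^1_T(\Q,\Q)\cong\Q^2$, while $\operatorname{Hom}_{\overline T}(\Q,\Q)\cong\Q$. In the same way $\Tor_1^T(\Q,\Q)\cong\Q^2$, while $\Q\otimes_{\overline T}\Q\cong\Q$.

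Rees's lemma requires the second argument $L$ to be $c$-regular; it reads $\Ext^{k+1}_T(M,L)\cong\Ext^k_{\overline T}(M,L/cL)$. When $cN=0$ instead, both $\Ext^0_T(\overline T,N)$ and $\Ext^1_T(\overline T,N)$ equal $N$, and you only recorded the second. The change-of-rings spectral sequence therefore has two rows, which amounts to a long exact sequence
\[
\cdots\to\Ext^{k+1}_{\overline T}(M,N)\to\Ext^{k+1}_T(M,N)\to\Ext^{k}_{\overline T}(M,N)\to\Ext^{k+2}_{\overline T}(M,N)\to\cdots .
\]
So $\Ext^{k+1}_{\overline T}(M,N)$ also contributes to $\Ext^{k+1}_T(M,N)$. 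This is exactly where your dimension shift breaks. For non-projective $M$, the sequence of $0\to K\to F\to M\to0$ involves the unknown group $\Ext^1_T(K,N)$. It also contains the cokernel of $\operatorname{Hom}(F,N)\to\operatorname{Hom}(K,N)$, which is $\Ext^1_{\overline T}(M,N)$ and injects into $\Ext^1_T(M,N)$ as an extra piece.

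\textbf{The repair.} You only need the isomorphism in the top degree $k=n=\pd_{\overline T}M$. There it holds, because $\Ext^{n+1}_{\overline T}(M,N)=\Ext^{n+2}_{\overline T}(M,N)=0$. Run the shift only in that degree.
\begin{itemize}
\item If $M$ is $\overline T$-projective, then $\Ext^1_T(M,N)\cong\operatorname{Hom}_{\overline T}(M,N)$, naturally in $M$. This is your free-module computation passed to summands.
\item For $n\ge1$, take $0\to K\to F\to M\to0$ with $F$ free over $\overline T$, so that $\pd_{\overline T}K=n-1$.
\item For $n\ge2$, the vanishing $\Ext^{\ge2}_T(F,N)=0$ gives $\Ext^{n+1}_T(M,N)\cong\Ext^n_T(K,N)$, and induction applies.
\item For $n=1$, naturality of the projective case identifies $\Ext^2_T(M,N)$ with the cokernel of $\operatorname{Hom}_{\overline T}(F,N)\to\operatorname{Hom}_{\overline T}(K,N)$, which is $\Ext^1_{\overline T}(M,N)$.
\end{itemize}
This is what the paper does in compressed form. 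It uses the two-row spectral sequence $\Ext^p_{\overline T}(M,\Ext^j_T(\overline T,N))\Rightarrow\Ext^{p+j}_T(M,N)$ and observes that the corner term $(q,1)$, with $q=\pd_{\overline T}M$, survives.

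Alternatively, you can use the genuine Rees lemma with a free $T$-module as second argument. For $n\ge1$, the last term $P_n$ of a length-$n$ resolution satisfies $\Ext^n_{\overline T}(M,P_n)\ne0$, since otherwise $P_n\to P_{n-1}$ would split. Lift a free $\overline T$-module having $P_n$ as a summand to a free $T$-module; it is $c$-regular, and Rees's lemma then gives $\Ext^{n+1}_T(M,-)\ne0$ on it.

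\textbf{The Tor statements.} Your Tor isomorphism is repaired in the same way, again only in the top degree. Its base case is a $\overline T$-flat $M$, for which $\Tor_i^T(M,N)\cong M\otimes_{\overline T}\Tor_i^T(\overline T,N)$.

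That formula also closes a small gap in your flat upper bound. Its base case needs $\fd_T\le1$ for every flat $\overline T$-module, not only the projective ones. The paper handles this by writing flat modules as filtered colimits of finite free modules.
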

	\begin{proof}
		The length-one resolution of \(\overline T\) gives the upper bounds: replace the terms in a finite \(\overline T\)-resolution. For flat terms, use a filtered colimit of finite free \(\overline T\)-modules. After this replacement, the flat dimension over \(T\) is at most one.
		
		For the projective lower bound, let \(q=\pd_{\overline T}M\) and choose an \(\overline T\)-module \(N\) with \(\Ext^q_{\overline T}(M,N)\ne0\). The change-of-rings spectral sequence
		\[
		\Ext^p_{\overline T}(M,\Ext^j_T(\overline T,N))
		\Longrightarrow \Ext^{p+j}_T(M,N)
		\]
		has only rows \(j=0,1\), both with coefficient \(N\). The term \((q,1)\) has no incoming higher differential, and an outgoing differential has first index greater than \(q\). Thus it survives. The Tor spectral sequence gives the flat lower bound in the same way: choose \(N\) with \(\Tor_q^{\overline T}(N,M)\ne0\) and use its term \((q,1)\). This is the regular-element change-of-rings theorem \cite[Section 4.3]{Weibel}; no finite generation assumption is needed.
	\end{proof}
	
	\begin{lemma}\label{localization}
		For a localization \(U\) of \(T\) and a \(U\)-module \(M\),
		\(\fd_T M=\fd_U M\). If \(U=T_c\), then \(\pd_T U\le1\) and
		\[
		\pd_U M\le\pd_T M\le\pd_U M+1
		\]
		whenever \(\pd_U M<\infty\).
	\end{lemma}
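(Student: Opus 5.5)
The plan has three parts: the flat equality, the bound $\pd_T U\le1$, and the two-sided projective inequality.

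\emph{Flat equality.} Since $U=S^{-1}T$ is flat over $T$, we have $U\otimes_T U\cong U$. Also, for every $U$-module $N$, $N\otimes_T U\cong N$.

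To get $\fd_T M\le\fd_U M$, take a $U$-flat resolution of $M$. Each flat $U$-module is flat over $T$, because $-\otimes_T F\cong(-\otimes_T U)\otimes_U F$ is a composition of exact functors. So the same resolution is a $T$-flat resolution.

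To get $\fd_U M\le\fd_T M$, take a $T$-flat resolution $F_\bullet\to M$ and apply $U\otimes_T-$. This is exact by flatness of $U$, and the result is $U\otimes_T M\cong M$. Each $U\otimes_T F_i$ is $U$-flat, so this gives a $U$-flat resolution of the same length.

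\emph{Projective dimension of $T_c$.} We have $T_c\cong\varinjlim(T\xrightarrow{c}T\xrightarrow{c}\cdots)$, a countable direct limit of free modules. The standard telescope sequence
\[
0\to\textstyle\bigoplus_{i\ge0}T\xrightarrow{\ \phi\ }\bigoplus_{i\ge0}T\to T_c\to0,
\]
with $\phi(e_i)=e_i-ce_{i+1}$, is exact. Injectivity of $\phi$ is checked by looking at the last nonzero coordinate. Hence $\pd_T T_c\le1$.

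\emph{Projective inequality.} By Lemma~\ref{localization}'s hypothesis, $q=\pd_U M<\infty$.

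For the upper bound, take a $U$-projective resolution of $M$ of length $q$. Each term is a summand of a free $U$-module, hence of a direct sum of copies of $T_c$, so each term has $T$-projective dimension at most $1$. A standard dimension-shifting argument then gives $\pd_T M\le q+1$. One way is to split the resolution into short exact sequences and use the inequality $\pd_T A\le\max(\pd_T B,\pd_T C-1)$ for $0\to A\to B\to C\to0$.

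For the lower bound $\pd_U M\le\pd_T M$, take a $T$-projective resolution $P_\bullet$ of $M$ of length $p=\pd_T M$. If $p=\infty$ there is nothing to show. Applying $U\otimes_T-$ is exact and yields a $U$-projective resolution of $U\otimes_T M\cong M$ of length $p$. Hence $\pd_U M\le p$.

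\emph{Main obstacle.} The only nontrivial points are the telescope argument for $\pd_T T_c\le1$ and the bookkeeping in the dimension shift. Neither is a serious obstacle. The one thing to be careful about is that the upper bound needs a finite $U$-resolution, which is exactly why the hypothesis $\pd_U M<\infty$ is required.
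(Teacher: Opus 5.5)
Your outline matches the paper's proof. You restrict and localize flat resolutions, use the telescope for \(\pd_T T_c\le1\), restrict a finite \(T_c\)-projective resolution for the upper bound, and localize a \(T\)-projective resolution for the lower bound.

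\textbf{The injectivity check fails as written.} Take \(v=\sum_{i=0}^n a_ie_i\) with \(a_n\ne0\). The last nonzero coordinate of \(\phi(v)\) is \(-ca_n\), and this can vanish when \(c\) is a zerodivisor. The lemma places no regularity hypothesis on \(c\), and the paper needs the zerodivisor case. For example, the element \(f=\sum_i t_ix^i\) used in Proposition~\ref{infinite-witnesses} and in \eqref{cardinal-lower} satisfies \(t_1f=t_1^2x=0\). There, the bound \(\pd_T T_c\le1\) is exactly what produces the upper bounds \(r+1\) and \(m+3\).

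The repair is to read the coordinates from the bottom. One has \(\phi(v)_0=a_0\) and \(\phi(v)_j=a_j-ca_{j-1}\) for \(j\ge1\). So \(\phi(v)=0\) forces \(a_0=a_1=\cdots=a_n=0\) in turn. This is the point of the paper's remark that comparing coordinates works ``even when \(c\) is a zerodivisor.''

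\textbf{The dimension-shift inequality is the wrong one.} The inequality \(\pd_T A\le\max(\pd_T B,\pd_T C-1)\) for \(0\to A\to B\to C\to0\) is true. However, it bounds the kernel, so it runs the wrong way for your induction. You need
\[
\pd_T C\le\max(\pd_T B,\pd_T A+1).
\]
Apply it to the sequences \(0\to K_{i+1}\to P_i\to K_i\to0\), starting from \(K_q=P_q\). This gives \(\pd_T K_i\le q-i+1\), and hence \(\pd_T M\le q+1\). With these two corrections the argument is complete.
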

	\begin{proof}
		The flat equality follows by restricting and localizing flat resolutions. On \(F=\bigoplus_{j\ge0}Te_j\), the map \(e_j\mapsto e_j-ce_{j+1}\) is injective: comparing coordinates of a finite-support vector proves this even when \(c\) is a zerodivisor. Its cokernel is \(T_c\). Hence
		\begin{equation}\label{telescope}
			0\longrightarrow F\longrightarrow F\longrightarrow T_c\longrightarrow0
		\end{equation}
		is a projective resolution. Restrict a finite \(T_c\)-projective resolution for the upper bound, and localize a \(T\)-projective resolution for the lower bound.
	\end{proof}
	
	\begin{lemma}\label{flat-gluing}
		Suppose \(c\) is regular on both \(T\) and \(L\). Then \(L\) is flat if and only if \(L/cL\) is flat over \(T/cT\) and \(L_c\) is flat over \(T_c\).
	\end{lemma}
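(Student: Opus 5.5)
The forward direction is routine: if \(L\) is flat over \(T\), then \(L/cL=L\otimes_T T/cT\) is flat over \(T/cT\) and \(L_c=L\otimes_T T_c\) is flat over \(T_c\), both by base change. The whole content lies in the converse. For that I will use the Tor criterion: \(L\) is flat over \(T\) if and only if \(\Tor_1^T(N,L)=0\) for every \(T\)-module \(N\). It is enough to test finitely generated \(N\), or even cyclic \(N=T/I\). I will split an arbitrary \(N\) into a \(c\)-torsion part and a \(c\)-torsion-free part, and handle each with one of the two hypotheses.

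\emph{Step 1: modules killed by \(c\).} Let \(N\) be a \(T/cT\)-module. Since \(c\) is regular on \(T\), we have \(\Tor_j^T(T/cT,L)=0\) for \(j\ge1\) because \(c\) is regular on \(L\). Hence the base-change spectral sequence (or the standard isomorphism for a regular element regular on the module) gives \(\Tor_i^T(N,L)\cong\Tor_i^{T/cT}(N,L/cL)\), and the right side vanishes for \(i\ge1\) by the hypothesis on \(L/cL\). Concretely, tensor a free \(T\)-resolution of \(L\) with \(T/cT\). Regularity of \(c\) on \(L\) makes the result a free \(T/cT\)-resolution of \(L/cL\), and \(N\otimes_T-\) of the \(T\)-resolution equals \(N\otimes_{T/cT}-\) of the reduced one. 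By induction along the filtration \(c^kN\), the vanishing \(\Tor_1^T(N,L)=0\) then extends to every \(N\) annihilated by a power of \(c\), and by a direct limit to all \(c\)-power-torsion modules \(N=\Gamma_c(N)\).

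\emph{Step 2: \(c\)-torsion-free modules.} Let \(N\) have no \(c\)-torsion, so \(N\hookrightarrow N_c\) with cokernel \(N_c/N\), which is \(c\)-power-torsion. We have \(\Tor_i^T(N_c,L)\cong\Tor_i^{T_c}(N_c,L_c)=0\) for \(i\ge1\) since \(T_c\) is flat over \(T\). The long exact sequence gives
\[
\Tor_2^T(N_c/N,L)\to\Tor_1^T(N,L)\to\Tor_1^T(N_c,L)=0 .
\]
The main obstacle is therefore that Step 1 must control \(\Tor_2\), not only \(\Tor_1\), on torsion modules. This is exactly what the isomorphism in Step 1 provides for all \(i\ge1\), and the dévissage along \(c^k\)-filtrations and direct limits works for each \(i\) simultaneously. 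So \(\Tor_1^T(N,L)=0\).

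\emph{Step 3: general \(N\).} For a general \(N\), use \(0\to\Gamma_c(N)\to N\to N/\Gamma_c(N)\to0\). The quotient is \(c\)-torsion-free, because \(c^kx\in\Gamma_c(N)\) forces \(x\in\Gamma_c(N)\). Steps 1 and 2 kill \(\Tor_1\) on the outer terms, hence on \(N\), so \(L\) is flat. Note that regularity of \(c\) on \(T\) is used to identify \(\Tor^T(T/cT,-)\). Regularity on \(L\) is used both there and to make \(L/cL\) the derived reduction, and no finiteness hypotheses on \(L\) or \(N\) enter.
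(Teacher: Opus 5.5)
Your proof is correct and takes essentially the same route as the paper. Step 1 matches the paper's argument: vanishing of \(\Tor_i^T(T/cT,L)\), change of rings, the filtration by powers of \(c\), and filtered colimits. Your Steps 2 and 3 are exactly the two Tor sequences the paper gets by factoring \(Q\to Q_c\) through its image \(Q/\Gamma_c(Q)\), and you correctly note that the torsion step must kill \(\Tor_2\) as well, which both arguments supply by proving vanishing in every positive degree.
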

	\begin{proof}
		Necessity follows by tensoring. For sufficiency, the resolution of \(T/cT\) gives \(\Tor_i^T(T/cT,L)=0\) for \(i>0\). Change of rings and flatness of \(L/cL\) then give \(\Tor_i^T(Q,L)=0\) for every module \(Q\) killed by \(c\) and every \(i>0\). Filtering by powers of \(c\) extends this to modules killed by a power of \(c\). Since Tor commutes with filtered colimits, it extends to all \(c\)-power torsion modules.
		
		For arbitrary \(Q\), the kernel and cokernel of \(Q\to Q_c\) are \(c\)-power torsion. Also \(\Tor_i^T(Q_c,L)=\Tor_i^{T_c}(Q_c,L_c)=0\) for \(i>0\). Factor this map through its image and use the two Tor exact sequences. They give \(\Tor_1^T(Q,L)=0\).
	\end{proof}
	
	\begin{theorem}\label{flat-formula}
		For a regular nonunit \(c\) of \(T\),
		\[
		\FFD(T)=\max\{\FFD(T/cT)+1,\FFD(T_c)\}.
		\]
		In particular, for every ring \(R\),
		\begin{equation}\label{intro-formula}
			\FFD(R[[x]])=\max\{\FFD(R)+1,\FFD(R((x)))\}.
		\end{equation}
	\end{theorem}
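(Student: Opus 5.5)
We prove the two inequalities separately and then specialize to \(c=x\) in \(T=R[[x]]\), where \(x\) is a regular nonunit, \(T/xT=R\) and \(T_x=R((x))\).

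\emph{Lower bound.} If \(M\) is a \(T/cT\)-module with \(\fd_{T/cT}M=q<\infty\), then \(M\ne0\), and Lemma~\ref{shift} gives \(\fd_T M=q+1\). Hence \(\FFD(T)\ge\FFD(T/cT)+1\); in particular \(\FFD(T)\ge1\). If \(M\) is a \(T_c\)-module, Lemma~\ref{localization} gives \(\fd_T M=\fd_{T_c}M\), so \(\FFD(T)\ge\FFD(T_c)\). No finiteness condition is needed here: when one side is infinite, the same comparisons give infinite \(\FFD(T)\).

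\emph{Upper bound.} Let \(\fd_T M=n<\infty\), and put \(a=\FFD(T/cT)\), \(b=\FFD(T_c)\). We may assume both are finite, and we want \(n\le m:=\max\{a+1,b\}\). Suppose \(n>m\); since \(m\ge1\), we have \(n\ge2\).

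Take a flat resolution and let \(K\) be the syzygy at a stage \(k\) with \(k\ge1\) and \(k+m\le n\); we can arrange \(\fd_T K=n-k\ge m\). Replacing \(M\) by a syzygy changes \(n\) only by \(k\), so it suffices to prove the following claim, which contradicts \(\fd_T K=n-k\ge m\) when \(n-k\ge1\).

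\emph{Claim.} If \(L\) is a first syzygy, so \(c\) is regular on \(L\), and \(\fd_T L<\infty\), then \(\fd_T L\le m-1\).

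To prove the claim, set \(j=m-1\) and take the \(j\)-th syzygy \(L'\) of \(L\). A syzygy of a \(c\)-torsionfree module stays \(c\)-torsionfree, because it is a submodule of a flat module. We show \(L'\) is flat using Lemma~\ref{flat-gluing}, which requires two facts.

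\begin{enumerate}[label=\textup{(\alph*)},nosep]
\item \(L'_c\) is flat over \(T_c\). This holds because \(\fd_{T_c}L_c\le\fd_T L<\infty\), so \(\fd_{T_c}L_c\le b\); also \(j\ge b\) would suffice if \(m=b\).
\item \(L'/cL'\) is flat over \(T/cT\). Since \(c\) is regular on \(L\) and on the flat modules, reducing the flat resolution mod \(c\) gives a flat resolution of \(L/cL\) over \(T/cT\). Hence \(\fd_{T/cT}L/cL\le\fd_T L<\infty\), so \(\fd_{T/cT}L/cL\le a\le j\), and the \(j\)-th syzygy \(L'/cL'\) is flat.
\end{enumerate}

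\emph{Index bookkeeping.} The syzygy index needed for (a) is \(b\) and for (b) is \(a\). Applying the argument directly to \(M\) rather than to a first syzygy fails, because \(c\) may not be regular on \(M\). Using a first syzygy \(L\) of \(M\), the claim gives \(\fd_T M\le \fd_T L+1\le\max\{a,b\}+1\). The issue is the case \(b>a\), where this bound of \(\max\{a,b\}+1\) exceeds the target \(m\). To close the gap, I would use \(\Tor\) directly: it suffices to show \(\Tor^T_{m+1}(Q,M)=0\) for all \(Q\). As in the proof of Lemma~\ref{flat-gluing}, split \(Q\) via \(Q\to Q_c\).
\begin{itemize}[nosep]
\item For \(c\)-power torsion \(Q\), reduce to \(Q\) killed by \(c\). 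There \(\Tor^T_i(Q,M)\) is computed by \(\Tor^{T/cT}\) of \(Q\) against the complex \(M\otimes^{\mathbf L}T/cT\). This complex has homology \(M/cM\) and \({}_cM\), and the flat dimensions of these are bounded via syzygies by \(a\). This yields vanishing for \(i\ge a+2\).
\item For \(Q_c\), we have \(\Tor^T_i(Q_c,M)=\Tor^{T_c}_i(Q_c,M_c)=0\) for \(i>b\).
\end{itemize}
The long exact sequences then give vanishing for \(i>m\).

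\emph{Main obstacle.} The hard step is bounding \(\fd_{T/cT}\) of \(M/cM\) and \({}_cM\), which need not a priori be finite. I would handle it by applying the first-syzygy argument together with the short exact sequence \(0\to {}_cM\to \cdots\). Finiteness follows from the reduced flat resolution of \(M\), which has length \(n\), so the defined invariant \(a\) bounds these flat dimensions.

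Finally, specializing to \(T=R[[x]]\), \(c=x\) yields \eqref{intro-formula}.
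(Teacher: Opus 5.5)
\textbf{There is a genuine gap: the upper bound is not established as written.} Your overall strategy matches the paper's: pass to a \(c\)-regular syzygy, reduce its resolution mod \(c\), localize, and glue with Lemma~\ref{flat-gluing}. The problem is in step (a). There you only use \(\fd_{T_c}L_c\le b\), so the \((m-1)\)st syzygy \(L'\) is known to be flat after localization only when \(m-1\ge b\). This fails exactly when \(b>a\), as you noticed. Your remark that ``\(j\ge b\) would suffice if \(m=b\)'' cannot help, since \(j=m-1\).

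\textbf{The proposed Tor repair does not work.} The reduced resolution of \(M\) is not a resolution of \(M/cM\), because \(c\) need not be regular on \(M\). In fact \(M/cM\) and \({}_cM\) can have infinite flat dimension over \(T/cT\) even when \(\pd_TM=1\). Take \(T=k[[c,y]]/(y^2)\), so that \(c\) is regular and \(T/cT=k[y]/(y^2)\). Let \(M=T/(c+y)T\), where \(c+y\) is regular. Then \(M\cong k[[c]]/(c^2)\), and \(M/cM\cong{}_cM\cong k\), which has infinite flat dimension over \(k[y]/(y^2)\). So your ``main obstacle'' cannot be removed the way you suggest. The invariant \(a\) does not bound these homology modules, and the term-by-term Tor estimate for \(c\)-torsion \(Q\) collapses.

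\textbf{The missing observation is elementary.} Localization is exact and sends free modules to free modules. Hence \(L_c\) is a first syzygy of \(M_c\), and \(L'=\Omega^{m-1}L=\Omega^mM\) localizes to the \(m\)th syzygy of \(M_c\). That syzygy is flat because \(\fd_{T_c}M_c\le b\le m\). The indices are asymmetric:
\begin{itemize}
\item the reduction mod \(c\) uses \(m-1\ge a\) syzygies of the \(c\)-regular module \(L=\Omega^1M\);
\item the localization uses \(m\ge b\) syzygies of \(M_c\) itself.
\end{itemize}
With this, your Claim holds for \(j=m-1\) and gives \(\fd_TM\le\fd_TL+1\le m\) directly. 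No contradiction setup and no Tor analysis are needed. This is exactly the paper's proof: it takes \(\Omega^h_TM\) with \(h=\max\{a+1,b\}\) and applies Lemma~\ref{flat-gluing}. Your lower bound and the specialization to \(c=x\) are fine.
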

	\begin{proof}
		Lemmas \ref{shift} and \ref{localization} give the lower bounds. If their maximum is infinite, there is nothing left to prove. Otherwise set
		\(a=\FFD(T/cT)\), \(b=\FFD(T_c)\) and \(h=\max\{a+1,b\}\).
		
		Let \(M\) have finite flat dimension and take a free resolution. Its first syzygy \(Z\) is \(c\)-regular. Reduction of its finite flat resolution modulo \(c\) is exact, so \(\fd_{T/cT}(Z/cZ)\le a\). The module \(L=\Omega_T^hM\) is \(c\)-regular, and \(L/cL\) is an \((h-1)\)st syzygy of \(Z/cZ\), hence flat. The localized module \(L_c\) is flat because \(\fd_{T_c}M_c\le b\le h\). Lemma \ref{flat-gluing} makes \(L\) flat.
	\end{proof}
	
	For a ring \(H\) and an \(H\)-module \(E\), the idealization \(H\ideal E\) has underlying module \(H\oplus E\) and multiplication \((a,e)(b,f)=(ab,af+be)\).
	
	\begin{theorem}\label{idealization}
		Let \(s\) be a regular nonunit of \(H\), let \(E=H\ideal H/sH\), and put \(J=0\ideal H/sH\). If \(\fd_E M<\infty\) and \(N=M/JM\), then
		\[
		\Tor_i^E(H,M)=\Tor_i^E(J,M)=0\quad(i>0),\qquad
		s:N\longrightarrow N\text{ is injective},\qquad
		\fd_E M=\fd_H N.
		\]
		If \(\pd_E M<\infty\), then also \(\pd_E M=\pd_H N\). Conversely, if \(s\) is regular on an \(H\)-module \(N\) of finite flat or projective dimension, \(E\otimes_H N\) has that same dimension over \(E\). Consequently
		\begin{align}
			\FFD(E)&=\sup\{\fd_H N<\infty:s\text{ is regular on }N\},\label{ideal-flat}\\
			\FPD(E)&=\sup\{\pd_H N<\infty:s\text{ is regular on }N\}.\label{ideal-proj}
		\end{align}
	\end{theorem}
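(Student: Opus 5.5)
The plan is to exploit the square-zero extension \(0\to J\to E\to H\to0\), where \(J^2=0\) and \(E/J=H\), together with the fact that, as an \(E\)-module, \(J\cong H/sH\) is killed by \(J\) and by \(s=(s,0)\). As \(E\)-modules (via \(E\to H\)) we also have the exact sequence \(0\to H\xrightarrow{s}H\to J\to0\).

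\textbf{Tor vanishing and regularity of \(s\).} Fix \(M\) with \(\fd_EM=f<\infty\). The sequence \(0\to J\to E\to H\to0\) gives \(\Tor_{i+1}^E(H,M)\cong\Tor_i^E(J,M)\) for \(i\ge1\). It also gives \(\Tor_1^E(H,M)=\ker(J\otimes_EM\to M)\). The sequence \(0\to H\xrightarrow{s}H\to J\to0\) gives the long exact sequence
\[
\cdots\to\Tor_i^E(H,M)\xrightarrow{s}\Tor_i^E(H,M)\to\Tor_i^E(J,M)\to\Tor_{i-1}^E(H,M)\xrightarrow{s}\cdots .
\]
I would argue by descending induction on \(k\ge1\) that \(\Tor_k^E(H,M)=\Tor_k^E(J,M)=0\); both vanish for \(k>f\). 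Suppose \(\Tor_{k+1}^E(H,M)=0\) and \(\Tor_{k}^E(J,M)=\Tor_{k+1}^E(J,M)=0\). The long sequence then shows that \(s\) acts bijectively on \(\Tor_k^E(H,M)\). This module is also killed by \(s\): for \(k\ge2\) it is isomorphic to \(\Tor_{k-1}^E(J,M)\), and \(s\) annihilates \(J\); for \(k=1\) it is a submodule of \(J\otimes_EM\). Hence \(\Tor_k^E(H,M)=0\), and then \(\Tor_{k-1}^E(J,M)\cong\Tor_k^E(H,M)=0\) when \(k\ge2\). Finally, the vanishing of \(\Tor_1^E(J,M)\) feeds into the segment \(\Tor_1^E(J,M)\to H\otimes_EM\xrightarrow{s}H\otimes_EM\). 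Since \(H\otimes_EM=M/JM=N\), this shows that \(s\) is injective on \(N\).

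\textbf{Equality of dimensions.} Because \(\Tor_{>0}^E(H,M)=0\), applying \(H\otimes_E-\) to a flat (respectively projective) \(E\)-resolution of \(M\) gives a flat (projective) \(H\)-resolution of \(N\). So \(\fd_HN\le\fd_EM\), and likewise \(\pd_HN\le\pd_EM\). For the reverse inequality, let \(k=\fd_HN\) and \(L=\Omega_E^kM\). Then \(L\) has finite flat dimension, so the vanishing above applies to \(L\), and \(L/JL\cong\Omega_H^kN\) is flat over \(H\).

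To show \(L\) is flat, take any \(E\)-module \(Q\) and filter it as \(JQ\subseteq Q\), so that both layers are \(H\)-modules. For an \(H\)-module \(Q'\), the change-of-rings spectral sequence together with \(\Tor_{>0}^E(H,L)=0\) gives \(\Tor_i^E(Q',L)\cong\Tor_i^H(Q',L/JL)=0\). Hence \(\Tor_1^E(Q,L)=0\), and \(L\) is flat.

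In the projective case \(L/JL\) is projective over \(H\). Lift it to a projective \(E\)-module \(P\) with a map \(P\to L\); this map is surjective by Nakayama's lemma for the nilpotent ideal \(J\). Its kernel \(K\) satisfies \(K/JK=0\) by flatness of \(L\), so \(K=JK=J^2K=0\) and \(L\) is projective. This lifting step is the point I expect to need the most care.

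\textbf{Converse.} Suppose \(s\) is regular on \(N\). Since \(E\cong H\oplus H/sH\) as an \(H\)-module, \(\Tor_i^H(E,N)=\Tor_i^H(H/sH,N)=0\) for \(i>0\). Hence \(E\otimes_H-\) carries a flat or projective \(H\)-resolution of \(N\) to one of \(E\otimes_HN\) of the same length, so \(\fd_E(E\otimes_HN)\le\fd_HN\), and likewise for \(\pd\). Reducing modulo \(J\) recovers \(N\), so the first part gives equality. The formulas \eqref{ideal-flat} and \eqref{ideal-proj} follow by combining the two directions.
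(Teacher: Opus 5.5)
Your proof is correct, and its overall architecture matches the paper's: the square-zero extension \(0\to J\to E\to H\to0\), vanishing of \(\Tor_{>0}^E(H,M)\) and \(\Tor_{>0}^E(J,M)\), reduction of resolutions modulo \(J\), lifting flatness and projectivity across the nilpotent ideal, and the converse via \(E\cong H\oplus H/sH\).

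Two steps differ.

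\emph{Tor vanishing.} The paper uses a third exact sequence, \(0\to H\oplus J\to E\to J\to0\). This is multiplication by \((0,1)\), with kernel \(sH\oplus J\cong H\oplus J\). It yields the recurrence \(\Tor_{i+1}^E(J,M)\cong\Tor_i^E(H,M)\oplus\Tor_i^E(J,M)\), which kills everything by descending from high degrees. You instead observe that multiplication by \(s\) is both bijective and zero on \(\Tor_k^E(H,M)\). This avoids the extra sequence, at the cost of using \(E\)-linearity of the connecting isomorphisms. Both arguments are valid.

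\emph{Lifting.} The paper cites the Stacks project for two facts: flatness lifts when the reduction is flat and \(\Tor_1^E(H,-)=0\), and a flat module with projective reduction modulo a nilpotent ideal is projective. You prove both by hand, so your version is self-contained; the paper's is shorter.

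The step you flagged as delicate is easier than you feared. The map \(a\mapsto(a,0)\) is a ring section \(H\to E\) of the projection. So \(P=E\otimes_H(L/JL)\) is a projective \(E\)-module with \(P/JP\cong L/JL\), and no general theorem on lifting projectives is needed. Your Nakayama argument and the conclusion \(K/JK=0\) then finish the proof exactly as you wrote.
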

	\begin{proof}
		Inflate \(H\)-modules along \(E\to H\). There are exact sequences
		\[
		0\to J\to E\to H\to0,\qquad
		0\to H\oplus J\to E\to J\to0,\qquad
		0\to H\xrightarrow{s}H\to J\to0.
		\]
		In the second sequence the surjection is multiplication by \((0,1)\); its kernel \(sH\oplus J\) is isomorphic to \(H\oplus J\) as an \(E\)-module, since \(sJ=0\). Writing \(T_i=\Tor_i^E(H,M)\) and \(U_i=\Tor_i^E(J,M)\) gives
		\[
		T_{i+1}\cong U_i,\qquad U_{i+1}\cong T_i\oplus U_i\quad(i\ge1).
		\]
		They vanish in high degrees, and the second recurrence forces their vanishing in every positive degree. The third sequence then proves regularity of \(s\) on \(N\).
		
		A free resolution of \(M\) reduces to a free \(H\)-resolution of \(N\), and every syzygy has the same positive Tor vanishing. Flatness lifts across a nilpotent ideal when the reduction is flat and \(\Tor_1^E(H,-)=0\) \cite[Tag 051C]{Stacks}. Thus the corresponding syzygies are flat at exactly the same stages, proving the flat equality. A flat module whose reduction modulo a nilpotent ideal is projective is projective \cite[Tag 05CG]{Stacks}. Apply this to the \(\pd_H N\)th syzygy to obtain the projective equality.
		
		Conversely \(E=H\oplus H/sH\) as an \(H\)-module, so regularity of \(s\) on \(N\) gives \(\Tor_i^H(E,N)=0\) for \(i>0\). Tensoring a finite flat or projective resolution is exact, and its reduction modulo \(J\) is \(N\). The equalities already proved show that its dimension is unchanged.
	\end{proof}
	
	\begin{corollary}\label{ideal-bounds}
		In the notation of Theorem \ref{idealization},
		\[
		\FFD(H_s)\le\FFD(E)\le\max\{\FFD(H/sH),\FFD(H_s)\},
		\qquad \FPD(E)\le\FPD(H).
		\]
	\end{corollary}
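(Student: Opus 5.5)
The plan is to read all three inequalities off the characterizations \eqref{ideal-flat} and \eqref{ideal-proj} in Theorem \ref{idealization}. Each bound then becomes a statement about \(H\)-modules \(N\) of finite dimension on which \(s\) is regular.

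\emph{Projective bound.} This is immediate. By \eqref{ideal-proj}, \(\FPD(E)\) is a supremum of the numbers \(\pd_H N<\infty\) over a subclass of all \(H\)-modules of finite projective dimension. Every such number is at most \(\FPD(H)\), so \(\FPD(E)\le\FPD(H)\).

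\emph{Flat lower bound.} Let \(M\) be an \(H_s\)-module with \(\fd_{H_s}M<\infty\). By Lemma \ref{localization}, \(\fd_H M=\fd_{H_s}M\). Multiplication by \(s\) is bijective on \(M\), so in particular \(s\) is regular on \(M\). Hence \(M\) is admissible in \eqref{ideal-flat}, which gives \(\fd_{H_s}M\le\FFD(E)\). Taking the supremum over \(M\) yields \(\FFD(H_s)\le\FFD(E)\).

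\emph{Flat upper bound.} Set \(a=\FFD(H/sH)\) and \(b=\FFD(H_s)\), and assume both are finite; otherwise there is nothing to prove. Let \(N\) be an \(H\)-module with \(\fd_H N<\infty\) on which \(s\) is regular. I must show \(\fd_H N\le\max\{a,b\}\), which is sharper than the bound \(\max\{a+1,b\}\) supplied by Theorem \ref{flat-formula}. The gain must come from the regularity of \(s\) on \(N\). I would repeat the syzygy argument of Theorem \ref{flat-formula}, but start from \(N\) itself rather than from its first syzygy.
\begin{enumerate}[nosep]
\item Since \(s\) is regular on both \(H\) and \(N\), \(\Tor_i^H(H/sH,N)=0\) for \(i>0\).
\item Reducing a finite flat resolution of \(N\) modulo \(s\) therefore stays exact. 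So \(N/sN\) has a finite flat \(H/sH\)-resolution, and \(\fd_{H/sH}(N/sN)\le a\).
\item Put \(h=\max\{a,b\}\) and take a free resolution of \(N\). The syzygy \(L=\Omega_H^hN\) is \(s\)-regular, being a submodule of a free module.
\item Since Tor against \(H/sH\) vanishes on \(N\) and on all its syzygies, \(L/sL\) is the \(h\)th syzygy of \(N/sN\) over \(H/sH\). It is therefore flat, because \(h\ge a\).
\item The localization \(L_s\) is an \(h\)th syzygy of \(N_s\) over \(H_s\), and \(\fd_{H_s}N_s\le b\le h\). So \(L_s\) is flat.
\item Lemma \ref{flat-gluing} then shows that \(L\) is flat, hence \(\fd_H N\le h\).
\end{enumerate}
Combined with \eqref{ideal-flat}, this gives \(\FFD(E)\le\max\{a,b\}\).

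The only step that needs care is step 4: identifying \(L/sL\) with the correct syzygy of \(N/sN\). The Tor vanishing must propagate to each short exact sequence \(0\to\Omega^{i+1}\to F_i\to\Omega^i\to0\). It does, because every \(\Omega^i\) is \(s\)-regular: for \(i=0\) by hypothesis, and for \(i\ge1\) as a submodule of a free module. Hence \(\Tor_1^H(H/sH,\Omega^i)=0\) for every \(i\), and reduction modulo \(s\) keeps each of these sequences exact.
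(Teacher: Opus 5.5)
Your proposal is correct and follows the paper's proof essentially step for step. The lower flat bound and the projective bound are read off from \eqref{ideal-flat} and \eqref{ideal-proj}, and the upper flat bound comes from applying Lemma \ref{flat-gluing} to the \(h\)th free syzygy of the admissible module \(N\) itself (to \(N\) when \(h=0\)), with your explicit check that every \(\Omega^i\), including \(\Omega^0=N\), is \(s\)-regular being the detail the paper compresses into one line.
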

	\begin{proof}
		An \(H_s\)-module is \(s\)-regular, so the lower bound follows from Lemma \ref{localization} and \eqref{ideal-flat}. For an admissible \(N\) in that formula, both \(N/sN\) and \(N_s\) have finite flat dimension. If the displayed maximum is \(h<\infty\), its \(h\)th free syzygy is flat modulo \(s\) and after localization. Lemma \ref{flat-gluing} applies; when \(h=0\), apply it to \(N\) itself. The projective bound follows from \eqref{ideal-proj}.
	\end{proof}
	
	We shall also use two results on lifting projective dimension. If \(T_s\) and every \(T/s^jT\) are perfect, then
	\begin{equation}\label{almost-perfect}
		\fd_T M\le1\ \Longrightarrow\ \pd_T M\le1.
	\end{equation}
	This is \cite[Proposition 8.2]{BP}. If \(F\) is flat, \(F_s\) is projective, and \(F/s^jF\) is projective over \(T/s^jT\) for every \(j\ge1\), then
	\begin{equation}\label{strong-flat}
		\pd_T F\le1.
	\end{equation}
	Indeed \cite[Corollary 9.7]{Pos} makes \(F\) strongly flat for the countable localization: it is a summand of an extension of a free \(T_s\)-module by a free \(T\)-module. Equation \eqref{telescope} gives \eqref{strong-flat}. These statements require no finiteness condition on the module.
	
	\section{Noetherian rings}
	For a commutative Noetherian ring \(T\), the classical big finitistic dimension formulas are
	\begin{equation}\label{classical}
		\FPD(T)=\dim T,\qquad \FFD(T)=\sup_{\mathfrak p\in\Spec T}\depth T_{\mathfrak p}.
	\end{equation}
	The flat formula is due to Auslander and Buchsbaum, and the projective formula combines the lower bound of Bass with the upper bound of Gruson and Raynaud. Both are recorded in \cite[Theorems 3.1 and 3.2]{Foxby}. For a Noetherian local ring \(T\) of dimension \(e\), the same results give
	\begin{equation}\label{local-classical}
		\FFD(T)=
		\begin{cases}
			e,&T\text{ is Cohen--Macaulay},\\
			e-1,&T\text{ is not Cohen--Macaulay}.
		\end{cases}
	\end{equation}
	In particular, the flat invariant in a local ring is not generally its depth.
	
	\begin{lemma}\label{flat-local}
		For every commutative ring \(T\),
		\[
		\FFD(T)=\sup_{\mathfrak m\in\Max T}\FFD(T_{\mathfrak m}).
		\]
	\end{lemma}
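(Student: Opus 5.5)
We prove the two inequalities separately. Throughout we use that flat dimension is detected locally: \(\fd_T M=\sup_{\mathfrak m\in\Max T}\fd_{T_{\mathfrak m}}M_{\mathfrak m}\). This holds because \(\Tor^T_i(N,M)_{\mathfrak m}\cong\Tor^{T_{\mathfrak m}}_i(N_{\mathfrak m},M_{\mathfrak m})\), and a module vanishes if and only if all its localizations at maximal ideals vanish. It also holds because flatness of a module is a local property, and localization is exact and takes a flat resolution to a flat resolution.

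\emph{Lower bound \(\FFD(T)\ge\FFD(T_{\mathfrak m})\).} Let \(M\) be a \(T_{\mathfrak m}\)-module with \(\fd_{T_{\mathfrak m}}M=r<\infty\). By Lemma \ref{localization}, applied with \(U=T_{\mathfrak m}\), we have \(\fd_T M=\fd_{T_{\mathfrak m}}M=r\). Only the flat equality of that lemma is needed, and it holds for any localization. So \(M\) is a \(T\)-module of finite flat dimension \(r\), and taking suprema gives \(\FFD(T)\ge\sup_{\mathfrak m}\FFD(T_{\mathfrak m})\).

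\emph{Upper bound.} Let \(s=\sup_{\mathfrak m}\FFD(T_{\mathfrak m})\), and assume \(s<\infty\); otherwise there is nothing to prove. Let \(M\) be a \(T\)-module with \(\fd_T M<\infty\). For each maximal ideal \(\mathfrak m\), the localization \(M_{\mathfrak m}\) has finite flat dimension over \(T_{\mathfrak m}\), since localizing a finite flat resolution gives one. Hence \(\fd_{T_{\mathfrak m}}M_{\mathfrak m}\le\FFD(T_{\mathfrak m})\le s\). The local detection above then gives \(\fd_T M\le s\). To make this concrete, take the \(s\)th syzygy \(L\) of a flat resolution of \(M\). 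Each \(L_{\mathfrak m}\) is an \(s\)th syzygy of \(M_{\mathfrak m}\) in a flat resolution, so it is flat. Therefore \(L\) is flat and \(\fd_T M\le s\).

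The only delicate point is uniformity. We need a single bound \(s\) valid at every maximal ideal, and the hypothesis \(\fd_T M<\infty\) is required to place each \(M_{\mathfrak m}\) inside the range counted by \(\FFD(T_{\mathfrak m})\). Both are handled above. Note also that, unlike the projective case, no countability or finiteness issue arises, because flatness is local.
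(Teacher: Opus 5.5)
Your proof is correct and follows the paper's argument. The upper bound comes from localizing a fixed syzygy and using that flatness is local, and the lower bound from the flat equality of Lemma \ref{localization} with \(U=T_{\mathfrak m}\); your extra Tor-localization justification is just an equivalent way of seeing the same local detection of flat dimension.
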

	\begin{proof}
		Flatness is local. Applying this to a syzygy gives
		\(\fd_T M=\sup_{\mathfrak m}\fd_{T_{\mathfrak m}}M_{\mathfrak m}\).
		For \(M\) of finite flat dimension this proves one inequality. Conversely, a \(T_{\mathfrak m}\)-module of finite flat dimension has the same flat dimension over \(T\), by Lemma \ref{localization}. Taking suprema proves the other.
	\end{proof}
	
	\begin{proof}[Proof of Theorem \ref{main-noeth}]
		Put \(A=R[[x]]\). It is Noetherian, and \(x\in\Jac(A)\), since \(1-xa\) has a formal inverse for every \(a\in A\). Its maximal ideals are therefore \((\mathfrak m,x)\), with \(\mathfrak m\in\Max R\). For
		\[
		T=A_{(\mathfrak m,x)},\qquad V=R_{\mathfrak m},
		\]
		the element \(x\) is regular in the maximal ideal and \(T/xT\cong V\). The dimension and depth formulas give
		\[
		\dim T=\dim V+1,\qquad \depth T=\depth V+1.
		\]
		Thus \(T\) is Cohen--Macaulay if and only if \(V\) is. Each Noetherian local ring has finite dimension, so \eqref{local-classical} gives
		\(\FFD(T)=\FFD(V)+1\) in either case. Lemma \ref{flat-local} proves the flat equality.
		
		Likewise
		\[
		\dim A=\sup_{\mathfrak m}\dim A_{(\mathfrak m,x)}
		=\sup_{\mathfrak m}(\dim R_{\mathfrak m}+1)=\dim R+1.
		\]
		Now use \eqref{classical}. If the Krull dimension supremum is infinite, these equalities mean that the local dimensions are unbounded; the same arguments still apply.
	\end{proof}
	
	\section{Coherent formal power series rings}
	Let \(K=\Q((t^{\R}))\) be the full Hahn field, with all well-ordered supports allowed, and let \(v\) be its valuation. Set
	\[
	V=\{a\in K:v(a)\ge0\},\qquad A=V[[x]],\qquad
	R=V\ideal V/tV,\qquad B=R[[x]].
	\]
	The full Hahn field is spherically complete, with value group \(\R\). Wang's theorem \cite[Theorem 1.1(i) and Remark 1.2]{YWang} makes \(A\) coherent. The coherent power series theorem \cite{JS} then gives
	\begin{equation}\label{hahn-wgd}
		\wgd A=\wgd V+1=2.
	\end{equation}
	Principal coefficient division identifies
	\[
	B\cong A\ideal A/tA.
	\]
	
	\begin{lemma}\label{valuation-pd}
		Every torsion-free \(V\)-module has projective dimension at most one.
	\end{lemma}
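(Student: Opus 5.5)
Since \(V\) is a valuation domain, a module over it is torsion-free exactly when it is flat: every finitely generated ideal of \(V\) is principal, hence free, so \(\Tor_1^V(V/I,M)=0\) for finitely generated ideals \(I\) whenever \(M\) is torsion-free. The statement therefore says that every flat \(V\)-module has projective dimension at most one. I will obtain this from the lifting results \eqref{almost-perfect} and \eqref{strong-flat}, with \(T=V\) and \(s=t\).

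First I check the hypotheses. The localization \(V_t\) is the field \(K\): for nonzero \(a\in V\), choose \(n\) with \(n>v(a)\), which is possible because the value group is \(\R\); then \(t^n/a\in V\), so \(1/a\in V_t\). A field is perfect. Next, \(V/t^jV\) is a local ring whose maximal ideal is the image of \(\{a:v(a)>0\}\). This ideal is \(T\)-nilpotent. Given a sequence \(a_1,a_2,\dots\) with every \(v(a_i)>0\), the partial sums of the \(v(a_i)\) are strictly increasing. I need such a partial sum to reach \(j\). Since the sequence is arbitrary, the sum may well converge, and then no partial sum reaches \(j\). So \(V/t^jV\) is \emph{not} perfect: the sequence \(a_i=t^{2^{-i}}\) gives an infinite nonzero product. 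Consequently \eqref{almost-perfect} and \eqref{strong-flat} cannot be applied in that form, and I switch to a direct argument.

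The direct route is the classical fact that flat modules over a domain whose quotient field has countable projective dimension \dots\ but \(K\) is huge, so that route fails as well. Instead I aim for the standard result that a torsion-free module \(M\) over a valuation domain with \(\pd_V K\le1\) satisfies \(\pd_V M\le1\). The plan has four steps.

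\textbf{(a)} Show \(\pd_V K\le1\). Here \(K=V_t\) is a localization at a single element, so the telescope sequence \eqref{telescope} of Lemma~\ref{localization} gives the bound directly.

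\textbf{(b)} Embed \(M\) in \(M\otimes_V K\cong K^{(I)}\), which is a direct sum of copies of \(K\) and so has projective dimension at most one by step (a).

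\textbf{(c)} Handle the quotient \(K^{(I)}/M\). This is the delicate point. A Matlis-type argument shows that \(\Ext^2_V(M,-)=0\) if and only if \(\Ext^2\) vanishes on the torsion quotient, so instead I compute \(\Ext^2\) via the Bazzoni--Salce theory of strongly flat modules. Over a domain in which every nonzero element has a power dividing a power of \(t\)\,—\,this holds here, since for \(a\neq0\) the element \(a\) divides \(t^n\) once \(n>v(a)\)\,—\,the ring is \(t\)-adically ``one-element'' localizable. By \cite[Corollary 9.7]{Pos}, a flat \(V\)-module is strongly flat with respect to the countable multiplicative set \(\{t^j\}\) provided its reductions \(M/t^jM\) are projective over \(V/t^jV\) and \(M_t\) is projective over \(K\). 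The localization \(M_t\) is a \(K\)-vector space, hence projective. The reductions are flat over \(V/t^jV\).

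\textbf{(d)} Promote the flat reductions \(M/t^jM\) to projective ones. This is the main obstacle, because \(V/t^jV\) is not perfect. What I would try first is the fact that \(V/t^jV\) is a maximal valuation ring: since \(K\) is spherically complete, \(V\) is maximal, and so are its quotients. I would then use that flat modules over maximal valuation rings with nilpotent-free\,\dots\ are projective. If that step succeeds, \eqref{strong-flat} yields \(\pd_V M\le1\).
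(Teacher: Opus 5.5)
Your proposal has a genuine gap at step (d). The claim needed there is false, so the strongly flat route through \eqref{strong-flat} cannot prove the lemma. Take \(M=\mathfrak m=\{a\in V:v(a)>0\}\).
\begin{itemize}
\item Since the value group is \(\R\), we have \(\mathfrak m^2=\mathfrak m\), while \(t^{1/2}\notin t^j\mathfrak m\).
\item So \(\mathfrak m/t^j\mathfrak m\) is a nonzero flat \(V/t^jV\)-module whose fibre \(\mathfrak m/\mathfrak m^2\) over the residue field vanishes.
\item It is therefore not free, hence not projective over the local ring \(V/t^jV\).
\end{itemize}
The hypotheses of \eqref{strong-flat} thus fail already for this rank-one module. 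In fact \(\mathfrak m\) is not strongly flat for \(\{t^j\}\) at all: reductions of strongly flat modules are projective, because \(K/t^jK=0\) and \(K\) is flat. Yet \(\pd_V\mathfrak m\le1\), since \(\mathfrak m=\bigcup_n t^{1/n}V\). Likewise, the hoped-for statement that flat modules over the maximal valuation ring \(V/t^jV\) are projective contradicts your own correct observation that \(V/t^jV\) is not perfect (Bass). The sentences in (c) about a ``Matlis-type argument'' and the abandoned ``classical fact'' do not supply an argument either.

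The missing idea is the countability input the paper uses. For a nonzero submodule \(L\subseteq K\), the value set \(v(L\setminus\{0\})\) is an upper subset of \(\R\), so it has a countable coinitial subset. Since \(L\) is determined by its values, \(L\) is a countable increasing union of principal fractional ideals \(t^{\gamma_n}V\cong V\). The telescope then gives \(\pd_VL\le1\).

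For a general torsion-free \(M\), the paper proceeds as follows:
\begin{itemize}
\item Well-order a \(K\)-basis of \(K\otimes_VM\).
\item Filter \(M\) by its intersections with the spans of initial segments. This filtration is continuous, and each successive quotient embeds in \(K\) by the new coordinate.
\item Auslander's lemma (horseshoe at successors, colimits at limits) then yields \(\pd_VM\le1\).
\end{itemize}

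Alternatively, your steps (a)--(b) can be salvaged once you have the rank-one fact. It gives \(\pd_V V/I\le2\) for every ideal \(I\), hence \(\gld V\le2\) by Auslander's global dimension theorem. Then \(0\to M\to K^{(I)}\to K^{(I)}/M\to0\) gives \(\pd_VM\le\max\{1,2-1\}=1\).
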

	\begin{proof}
		For a nonzero submodule \(L\subseteq K\), its values form an upper subset of \(\R\), with a countable coinitial subset. Consequently \(L\) is a countable increasing union of principal fractional ideals. These are free of rank one, so the telescope gives \(\pd_V L\le1\).
		
		For a torsion-free module \(M\), embed it in \(K\otimes_VM\) and well-order a vector-space basis. Intersect \(M\) with the spans of initial segments. This gives a continuous filtration whose successive quotients embed in \(K\) by their new coordinate and have projective dimension at most one. At successors, the horseshoe lemma joins their length-one resolutions with split inclusions in the resolution terms. At limits, take filtered colimits. The terms are direct sums of the added projectives and the sequences remain exact, giving a length-one resolution of \(M\).
	\end{proof}
	
	\begin{proposition}\label{hahn-upper}
		One has \(\FFD(R)=0\), \(\FPD(R)=1\). The ring \(B\) is coherent and \(\FFD(B)\le2\).
	\end{proposition}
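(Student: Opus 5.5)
We derive all four assertions from Theorem \ref{idealization}, Corollary \ref{ideal-bounds}, Theorem \ref{flat-formula}, Lemma \ref{valuation-pd}, and the coherence of \(A\).

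\emph{The ring \(R\).} Apply Theorem \ref{idealization} with \(H=V\), \(s=t\), so that \(E=R\). By \eqref{ideal-flat}, \(\FFD(R)\) is the supremum of \(\fd_V N\) over \(t\)-regular \(V\)-modules \(N\) of finite flat dimension. A \(t\)-regular \(V\)-module is torsion-free, because every nonzero nonunit of \(V\) divides a power of \(t\). Over a valuation ring, torsion-free modules are flat. Hence \(\FFD(R)=0\).

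For the projective value, \eqref{ideal-proj} together with Lemma \ref{valuation-pd} gives \(\FPD(R)\le1\). For the lower bound, take \(N=K\). It is \(t\)-regular, and it is not projective over \(V\): \(V\) is local, so \(K\) would have to be free, and \(K\) is divisible but \(V\) is not a field. Therefore \(\pd_VK=1\), and Theorem \ref{idealization} gives \(\pd_R(R\otimes_VK)=1\). This proves \(\FPD(R)=1\).

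\emph{Coherence of \(B\).} Use \(B\cong A\ideal A/tA\). Here \(A\) is coherent, and \(A/tA\) is a finitely presented \(A\)-module, with presentation \(0\to A\xrightarrow{t}A\to A/tA\to0\). An idealization of a coherent ring by a finitely presented module is coherent. This is a standard fact, and I would check it directly from the sequence \(0\to J\to B\to A\to0\) with \(J=0\ideal A/tA\). Finitely generated ideals of \(B\) are extensions of a finitely generated \(A\)-ideal by a submodule of \(J\). Coherence of \(A\) makes every finitely generated \(A\)-submodule of \(A/tA\) finitely presented over \(A\). Relations over \(B\) then reduce to relations over \(A\) plus \(t\)-multiples. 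The step I expect to be the main obstacle is doing this cleanly; alternatively I would cite the known criterion for coherence of trivial extensions.

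\emph{The bound \(\FFD(B)\le2\).} Corollary \ref{ideal-bounds} with \(H=A\), \(s=t\) gives
\[
\FFD(B)\le\max\{\FFD(A/tA),\FFD(A_t)\}.
\]
The second term is at most \(\wgd A_t\le\wgd A=2\) by \eqref{hahn-wgd}, since localization does not increase weak global dimension.

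For the first term, note that \(A/tA=(V/tV)[[x]]\), and \(V/tV\) is a zero-dimensional ring whose maximal ideal is not nilpotent. Instead of computing its finitistic dimension directly, I would argue from \eqref{ideal-flat} applied to \(B\). Every admissible module there is an \(A\)-module of finite flat dimension, so its flat dimension is at most \(\wgd A=2\). This gives \(\FFD(B)\le2\) directly, and it is the cleaner route, so the detour through \(A/tA\) is unnecessary.
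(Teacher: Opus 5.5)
Your proposal is correct and follows the paper's proof: the reduction through Theorem \ref{idealization} to \(t\)-regular, hence torsion-free, \(V\)-modules, the witness \(R\otimes_VK\cong K=R_t\), and the bound \(\FFD(B)\le\wgd A=2\) via \eqref{ideal-flat} are the paper's steps (you compute \(\pd_VK=1\) and transfer it, while the paper argues non-freeness directly over the local ring \(R\)). For coherence, your sketch via \(0\to J\to B\to A\to0\) is not yet an argument, but the fact you cite is immediate here: \(B=A\oplus A/tA\) is a finitely presented \(A\)-module, so the kernel of any map \(B^a\to B^b\) is the kernel of a map of finitely presented modules over the coherent ring \(A\), hence finitely generated over \(A\) and therefore over \(B\).
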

	\begin{proof}
		Theorem \ref{idealization} reduces a finite-flat-dimensional \(R\)-module to a \(t\)-regular \(V\)-module. Every nonzero \(a\in V\) divides some \(t^n\), since the value group is Archimedean. Such a module is torsion-free and hence flat over \(V\) \cite[Tag 0539]{Stacks}. Thus \(\FFD(R)=0\). Lemma \ref{valuation-pd} gives \(\FPD(R)\le1\). The localization \(R_t=K\) has projective dimension exactly one: \eqref{telescope} gives the upper bound, and it cannot be free over the local ring \(R\), since the nonunit \(t\) acts surjectively. Projective modules over local rings are free \cite{Kaplansky}.
		
		As an \(A\)-module, \(B=A\oplus A/tA\) is finitely presented. A map between finite free \(B\)-modules is a map between finitely presented \(A\)-modules. Its kernel is finitely generated over coherent \(A\), and therefore over \(B\). This proves coherence. Equation \eqref{hahn-wgd} and Theorem \ref{idealization} give the flat upper bound.
	\end{proof}
	
	We now construct the lower-bound modules. Put
	\[
	\lambda_i=2^{-i},\qquad a_i=t^{\lambda_i},\qquad b_i=a_i+t^i,
	\]
	and define coefficientwise by Hahn summation
	\begin{equation}\label{hahn-products}
		f=t\prod_{i\ge1}(1-x/a_i),\qquad
		h=t\prod_{i\ge1}(1-x/b_i).
	\end{equation}
	These are not \(x\)-adic limits of the partial products.
	
	\begin{lemma}\label{hahn-sums}
		The expressions \eqref{hahn-products} belong to \(A\). For \(N\ge0\), put \(\delta_N=2^{-N}\). The normalized tails
		\[
		F_N=t^{\delta_N}\prod_{i>N}(1-x/a_i),\qquad
		H_N=t^{\delta_N}\prod_{i>N}(1-x/b_i)
		\]
		belong to \(A\) and have constant term \(t^{\delta_N}\). Moreover,
		\[
		f(a_i)=0,\qquad v(h(a_i))=i+i2^{-i}.
		\]
	\end{lemma}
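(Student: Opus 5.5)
We verify the statement coefficientwise, using the standard summability calculus of the Hahn field \(K=\Q((t^{\R}))\). A family \((c_\alpha)\) in \(K\) is summable when the union of the supports is well-ordered and each exponent lies in only finitely many supports. Finite products of sums of summable families are computed termwise (strong multiplicativity).

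\textbf{Coefficients of \(f\) and \(F_N\).} Expanding formally, the coefficient of \(x^k\) in \(\prod_{i>N}(1-x/a_i)\) is
\[
(-1)^k\sum_{S}t^{-\sigma(S)},\qquad \sigma(S)=\sum_{i\in S}2^{-i},
\]
where \(S\) runs over the \(k\)-element subsets of \(\{N+1,N+2,\ldots\}\).
\begin{itemize}[nosep]
\item Distinct finite sets have distinct binary sums, so each exponent occurs once.
\item The set \(\{\sigma(S)\}\) has no infinite strictly increasing sequence. Indeed, it is a \(k\)-fold sum of the reverse-well-ordered set \(\{2^{-i}\}\), and such sums stay reverse-well-ordered. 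Hence the exponents \(-\sigma(S)\) form a well-ordered set and the family is summable.
\item Since \(\sigma(S)<\sum_{i>N}2^{-i}=\delta_N\), multiplying by \(t^{\delta_N}\) gives coefficients of positive valuation for \(k\ge1\), and constant term \(t^{\delta_N}\).
\end{itemize}
So \(F_N\in A\), and \(f=F_0\) since \(\delta_0=1\).

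\textbf{Coefficients of \(h\) and \(H_N\).} Write
\[
1/b_i=t^{-\lambda_i}\sum_{n\ge0}(-1)^n t^{n(i-\lambda_i)}.
\]
Then every term in the expansion of the coefficient of \(x^k\) has exponent \(-\sigma(S)+\sum_{j\in S}n_j(j-\lambda_j)\) with \(n_j\ge0\). Each such exponent is at least \(-\sigma(S)>-\delta_N\), so after multiplying by \(t^{\delta_N}\) the valuations are again positive for \(k\ge1\). The constant term of \(H_N\) is \(t^{\delta_N}\).

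For summability we show that a bounded-below set of such exponents admits only finitely many representations. We also show it contains no infinite descending sequence. Both follow from \(j-\lambda_j\ge 1/2\): below any bound only finitely many tuples \((n_j)\) occur, and the \(\sigma\)-part is handled as before. This combinatorial bookkeeping is routine but must be written out.

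\textbf{Evaluations.} For \(g\in A\) with coefficients \(c_k\) and \(v(c_k)\ge -\sigma_{\max}\ge -1\), substitution of \(x=a_i\) gives terms \(c_ka_i^k\) of valuation at least \(1-1+k\lambda_i\) (including the factor \(t\)). These valuations tend to \(\infty\), which guarantees summability of \(g(a_i)\). I would then prove that substitution at \(a_i\) is multiplicative on such series and commutes with the infinite Hahn product, so that
\[
f(a_i)=t\prod_j(1-a_i/a_j),\qquad h(a_i)=t\prod_j(1-a_i/b_j).
\]
The factor \(j=i\) of \(f(a_i)\) vanishes, so \(f(a_i)=0\).

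For \(h(a_i)\) we compute the valuation factor by factor.
\begin{itemize}[nosep]
\item For \(j=i\), the factor is \(t^i/b_i\), of valuation \(i-\lambda_i\).
\item For \(j<i\), \(a_i/b_j\) has negative valuation \(\lambda_i-\lambda_j\), so the factor has valuation \(\lambda_i-\lambda_j\).
\item For \(j>i\), the factor is \(1+(\text{positive valuation})\), of valuation \(0\), and the tail product is a unit.
\end{itemize}
Summing,
\[
1+(i-\lambda_i)+(i-1)\lambda_i-(1-2^{1-i})=i+i2^{-i}.
\]

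\textbf{Main obstacle.} The hard step is justifying that evaluation at \(a_i\) is compatible with the infinite coefficientwise product, i.e.\ exchanging two Hahn summations. I would handle it by writing \(f=(1-x/a_i)\cdot t\prod_{j\ne i}(1-x/a_j)\) as a product of two elements of \(A\), with the second factor given by the same expansion. Then I would prove a lemma: \(g\mapsto g(a_i)\) is a ring map on the subring of series whose coefficient valuations are bounded below by \(-1\) and that are summable at \(a_i\). The proof of the lemma is strong multiplicativity of Hahn sums applied to the double family \(c_kd_l a_i^{k+l}\). The value of the tail \(\prod_{j>i}\) is obtained by the same exchange applied to \(t^{\delta_i}\)-normalized tails.
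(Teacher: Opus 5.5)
Your coefficient computations are sound and essentially the paper's. Neumann's lemma replaces its lexicographic ordering of increasing tuples, and the bound $j-\lambda_j\ge 1/2$, which limits the correction patterns below a given exponent, is its summability argument for $h$. Your route to $f(a_i)=0$ also works after a small repair. Since $1-x/a_i\notin A$, factor instead as $f=(a_i-x)G_i$ with $G_i=t^{1-\lambda_i}\prod_{j\ne i}(1-x/a_j)\in A$, or evaluate on $A[1/t]$. The set of series with coefficient valuations at least $-1$ is not closed under multiplication, so it is not the right domain for your ring-map lemma.

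The genuine gap is in $v(h(a_i))$. Your factor-by-factor count needs $\prod_{j>i}(1-a_i/b_j)$ to be a unit, and neither reason you give proves it. The factors do not tend to $1$: $v(a_i/b_j)=\lambda_i-\lambda_j\to\lambda_i$, so the partial products do not converge. The infinite product therefore means only the Hahn sum of the expanded family $\bigl(\prod_{j\in S}(-a_i/b_j)\bigr)_S$. Positivity of each $v(a_i/b_j)$ does not by itself make that family summable; you need a uniform positive lower bound. A multiplicativity lemma for two factors cannot evaluate an infinite product, and applying ``the same exchange'' to the tail merely restates the problem.

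The step can be filled directly. For $j>i$ one has $v(a_i/b_j)\ge\lambda_i-\lambda_{i+1}=2^{-i-1}$, so below any bound only finitely many sizes $|S|$ occur. The family is then summable, and only $S=\emptyset$ contributes at exponent $0$.

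The paper needs none of this. It writes $h=\prod_{j\le N}\frac{t^{\lambda_j}}{b_j}(b_j-x)\,H_N$ in $A$ with $N>i$ and applies the evaluation ring map. In $H_N(a_i)=\sum_k c_ka_i^k$ the constant term $t^{\delta_N}$ strictly dominates, because $v(a_i)>\delta_N$ and $v(c_k)\ge 0$. The finitely many valuations $v(b_j-a_i)$, plus $\delta_N$, then sum to $i+i2^{-i}$. Your strict bound $v(c_k)>0$ for $k\ge1$ even allows $N=i$. So the normalized tails of the statement already make your infinite exchange unnecessary.
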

	\begin{proof}
		For fixed \(r\), the \(x^r\) coefficient of \(f\) is the Hahn sum
		\[
		(-1)^r\sum_{\substack{F\subseteq\N_{>0}\\|F|=r}}
		t^{\,1-\sum_{i\in F}2^{-i}}.
		\]
		The exponents are distinct and well ordered, with least element \(2^{-r}\). Indeed, order the increasing \(r\)-tuples lexicographically. At the first different index, its binary weight exceeds the sum of any finite collection of later weights. This gives exactly the exponent order. All coefficients therefore lie in \(V\).
		
		For \(h\), set \(\eta_i=i-2^{-i}>0\) and expand
		\[
		b_i^{-1}=t^{-2^{-i}}\sum_{m\ge0}(-1)^m t^{m\eta_i}.
		\]
		Its \(x^r\) coefficient has exponents
		\(1-\sum_{i\in F}2^{-i}+\sum_{i\in F}m_i\eta_i\), \(|F|=r\).
		Below any fixed real bound there are only finitely many nonzero correction patterns: a positive \(m_i\) forces \(\eta_i\) to be bounded, and \(\eta_i\ge1/2\) bounds \(m_i\). For a fixed correction pattern, the remaining exponents are a subset of a translate of the well-ordered set already considered. Each exponent has finite multiplicity. This verifies Hahn summability. Its least exponent is again \(2^{-r}\), attained uniquely without corrections.
		
		The same argument with indices \(i>N\) proves the assertions for the tails. Coefficientwise multiplication gives
		\[
		f=\prod_{j=1}^N(a_j-x)F_N,\qquad
		h=\prod_{j=1}^N\frac{t^{\lambda_j}}{b_j}(b_j-x)H_N.
		\]
		A series in \(A\) can be evaluated at any \(z\) of positive valuation: its terms have valuations tending to infinity. If \(v(z)>\delta_N\), the constant term of \(H_N(z)\) dominates, so its valuation is \(\delta_N\). Fix \(i\) and take \(N>i\). The factorization gives \(f(a_i)=0\). Also
		\[
		v(b_j-a_i)=
		\begin{cases}
			\lambda_i,&j<i,\\
			i,&j=i,\\
			\lambda_j,&j>i.
		\end{cases}
		\]
		The factors \(t^{\lambda_j}/b_j\) have valuation zero. Summing gives
		\((i-1)\lambda_i+i+\sum_{j=i+1}^N\lambda_j+\delta_N=i+i\lambda_i\).
	\end{proof}
	
	\begin{lemma}\label{hahn-regular}
		If \(q\in A\) vanishes at every \(a_i\), then \(q\in fA\). Consequently \(f,h\) is an \(A\)-regular sequence, and \((f,h)A_t\) is proper.
	\end{lemma}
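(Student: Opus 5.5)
The plan is to work inside \(K[[x]]\) and show that the quotient \(q/f\) has coefficients in \(V\), controlling those coefficients through the factorization \(f=\prod_{j\le N}(a_j-x)F_N\) from Lemma \ref{hahn-sums}.

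\emph{Step 1: division by linear factors.} Suppose \(q\in A\) and \(q(z)=0\) for some \(z\in V\) with \(v(z)>0\). Write
\[
q(x)=q(x)-q(z)=(x-z)g(x),\qquad g_m=\sum_{k>m}q_k z^{k-1-m}.
\]
The terms of \(g_m\) have valuations tending to infinity, so the sum converges in \(V\), and \(g\in A\). Iterating over \(a_1,\dots,a_N\) gives \(q=\prod_{j\le N}(x-a_j)\,q_N\) with \(q_N\in A\). The iteration is valid because \(V[[x]]\) is a domain and \(a_j-a_i\neq0\). The remaining quotients still vanish at the later \(a_i\), since evaluation \(A\to K\) at \(a_i\) is a ring homomorphism.

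\emph{Step 2: passing to the limit.} The constant term of \(f\) is \(t\), so \(f\) is a unit in \(K[[x]]\) and \(g=q/f\in K[[x]]\) is defined. Comparing with the factorization of \(f\) gives
\[
g=\pm q_N/F_N\quad\text{for every }N.
\]
From the proof of Lemma \ref{hahn-sums}, the \(x^r\)-coefficient of \(\prod_{i>N}(1-x/a_i)\) has least exponent \(-\sum_{i\in F}2^{-i}\ge-\delta_N\). Hence
\[
F_N=t^{\delta_N}(1+\textstyle\sum_{r\ge1}c_rx^r),\qquad v(c_r)\ge-\delta_N .
\]
Consequently the \(r\)th coefficient of \((1+\sum c_rx^r)^{-1}\) has valuation at least \(-r\delta_N\). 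Since \(q_N\in A\), the \(r\)th coefficient of \(g\) then has valuation at least \(-(r+1)\delta_N\). Letting \(N\to\infty\) with \(r\) fixed gives valuation \(\ge0\). Thus \(g\in A\) and \(q\in fA\). I expect this uniform valuation estimate on \(F_N^{-1}\) to be the main point needing care; everything else is formal.

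\emph{Step 3: the consequences.} \(A\) is a domain, so \(f\ne0\) is regular. If \(hq\in fA\), then \(h(a_i)q(a_i)=0\) for all \(i\). Lemma \ref{hahn-sums} gives \(h(a_i)\ne0\), hence \(q(a_i)=0\) for all \(i\), and Step 1--2 gives \(q\in fA\). Also \((f,h)\ne A\), since both have constant term \(t\) and so lie in the maximal ideal \((\mathfrak m_V,x)\). So \(f,h\) is \(A\)-regular.

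For properness in \(A_t\), suppose \(t^m=f\alpha+h\beta\) with \(\alpha,\beta\in A\). Evaluating at \(a_i\) and using \(f(a_i)=0\) gives \(t^m=h(a_i)\beta(a_i)\). Since \(\beta(a_i)\in V\), the right side has valuation at least \(i+i2^{-i}\). This exceeds \(m\) for large \(i\), a contradiction.
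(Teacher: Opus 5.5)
Your proposal is correct and follows essentially the same route as the paper: divide out the roots \(a_1,\dots,a_N\) inside \(A\), write \(q/f=\pm q_N/F_N\), bound the \(r\)th coefficient of \(F_N^{-1}\) by valuation \(-(r+1)\delta_N\) and let \(N\to\infty\), then deduce regularity and properness by evaluating at the \(a_i\). The only cosmetic difference is in the last step, where you clear denominators to \(t^m=f\alpha+h\beta\) with \(\alpha,\beta\in A\); the paper instead keeps \(1=pf+qh\) in \(A_t\) and uses a common lower bound \(-m\) on the coefficient valuations of \(q\).
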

	\begin{proof}
		A root can be divided out inside \(A\). For \(q=\sum q_jx^j\) and \(q(a)=0\) with \(v(a)>0\), the coefficients
		\[
		Q_j=-\sum_{m\ge j+1}q_ma^{m-j-1}\in V
		\]
		give \(q=(a-x)\sum Q_jx^j\). Every other distinct root remains a root of the quotient. Hence \(q/P_N\in A\), where \(P_N=\prod_{i=1}^N(a_i-x)\).
		
		Let \(z=q/f\in K[[x]]\). Since \(f=P_NF_N\),
		\(z=(q/P_N)F_N^{-1}\). The inverse recurrence yields
		\(v([x^j]F_N^{-1})\ge-(j+1)\delta_N\). Thus
		\(v([x^j]z)\ge-(j+1)\delta_N\) for all \(N\). Letting \(N\to\infty\) gives \(z\in A\).
		
		The ring \(A\) is a domain and \(f\ne0\). If \(hq\in fA\), evaluation at \(a_i\) gives \(q(a_i)=0\), since \(h(a_i)\ne0\). Thus \(q\in fA\), proving regularity. The ideal is proper in local \(A\), since both constant terms are \(t\).
		
		Suppose \(1=pf+qh\) in \(A_t\). The coefficients of \(q\) have a common valuation lower bound \(-m\). Evaluation at \(a_i\) gives
		\(1=q(a_i)h(a_i)\), but its right side has valuation at least
		\(-m+i+i2^{-i}>0\) for large \(i\), a contradiction.
	\end{proof}
	
	\begin{proof}[Proof of Theorem \ref{main-coherent}]
		The coefficient assertions and coherence are Proposition \ref{hahn-upper}. Put \(D=A_t\) and \(N=D/(f,h)D\ne0\). The Koszul resolution gives \(\fd_DN=\pd_DN=2\); for the lower bound, its tensor with \(N\) has zero differentials and nonzero top homology. Since \(B_t=D\), Lemma \ref{localization} gives \(\FFD(B)\ge2\), proving equality.
		
		For the projective assertion, set \(C=A/(f,h)\). The localization \(C_t=N\) has \(C\)-projective dimension one: the telescope gives the upper bound, and it cannot be free over local \(C\) because \(t\) is a nonunit acting surjectively. Applying Lemma \ref{shift} twice gives \(\pd_A N=3\). The module \(N\) is \(t\)-regular, and \((A\ideal A/tA)\otimes_A N=N\). Theorem \ref{idealization} gives \(\pd_B N=3\).
	\end{proof}
	
	\section{The existence of pairs of FFDs and FPDs}
	\subsection{A coefficient-filtered seed}
	Fix \(r\ge2\), put \(k=\Q\) and \(K=k(u_1,\ldots,u_r)\). Enumerate representatives \(q_1,q_2,\ldots\) of the associate classes of irreducibles of \(k[u_1,\ldots,u_r]\). Define
	\[
	D_N=\prod_{j=1}^Nq_j^{\lfloor N/j\rfloor},\qquad
	W_N=\left\{\frac p{D_N}:\deg p\le\deg D_N+N\right\},\qquad D_0=1.
	\]
	Total degrees are taken in the \(u_i\). These are finite-dimensional \(k\)-spaces, \(W_0=k\), and
	\[
	W_N\subseteq W_{N+1},\qquad W_NW_M\subseteq W_{N+M},
	\qquad \bigcup_NW_N=K.
	\]
	The product inclusion follows from
	\(\lfloor N/j\rfloor+\lfloor M/j\rfloor\le\lfloor(N+M)/j\rfloor\).
	After a common denominator is chosen, the numerator bound increases by precisely its degree. Exhaustion follows by choosing \(N\) large enough to absorb the denominator and excess numerator degree.
	
	Set
	\begin{equation}\label{filtered-T}
		T=T_r=\left\{\sum_{j\ge0}a_jc^j\in K[[c]]:
		a_j\in W_{2^j-1}\right\},\qquad A=T[[x]].
	\end{equation}
	We use the following structural theorem of the coefficient-filtered construction \cite[Sections 2--4]{TVN}. Write
	\[
	F=k[[x]],\quad
	B_0=(k[u_1,\ldots,u_r]\setminus\{0\})^{-1}F[u_1,\ldots,u_r],
	\quad E_N=W_N\otimes_kF.
	\]
	For \(b\in B_0\) let \(w(b)=\min\{N:b\in E_N\}\), and put
	\[
	H=\left\{\sum_{j\ge0}b_jc^j\in B_0[[c]]:
	w(b_j)\le M2^j\text{ for all }j\text{ and some }M\right\}.
	\]
	The cited bounded-division and Noetherianity results state that
	\begin{equation}\label{structural}
		H\text{ is Noetherian},\quad c\in\Jac(H),\quad
		H/cH=B_0,\quad \dim H_c\le r.
	\end{equation}
	We treat \eqref{structural} as the structural input; the finitistic computations are proved here. Its completion is \(B_0[[c]]\): coefficient shifts show
	\(H\cap c^eB_0[[c]]=c^eH\), and every finite \(c\)-polynomial belongs to \(H\).
	
	\begin{lemma}\label{T-properties}
		The ring \(T\) is local, with maximal ideal \(\mathfrak m\) consisting of the series with zero constant term, and \(T/\mathfrak m=k\). Moreover
		\[
		\mathfrak m^3\subseteq cT,\qquad T_c\text{ is a field},\qquad A_c=H_c.
		\]
	\end{lemma}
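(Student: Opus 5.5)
The plan is to verify the four assertions in order: $T$ is a ring, it is local with residue field $k$, $\mathfrak m^3\subseteq cT$, $T_c$ is a field, and $A_c=H_c$. The main tools are the filtration properties $W_NW_M\subseteq W_{N+M}$ and $W_0=k$.

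\emph{Ring structure.} First I check that $T$ is a subring of $K[[c]]$. For products, the coefficient of $c^j$ is $\sum_{i+l=j}a_ib_l$, which lies in $W_{2^i-1+2^l-1}$. Since $2^i+2^l-2\le 2^{i+l}-1$ whenever $i,l\ge0$, this is contained in $W_{2^j-1}$.

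\emph{Local with residue field $k$.} The constant term lies in $W_0=k$, so $\mathfrak m$ is the kernel of a surjection $T\to k$. It remains to show that every series $u=a_0(1+y)$ with $a_0\in k^\times$ and $y$ having zero constant term is a unit. Write $y=\sum_{j\ge1} y_jc^j$. The inverse is $\sum_n(-y)^n$, a finite sum in each $c$-degree. Each monomial $y_{j_1}\cdots y_{j_n}$ contributing to degree $j=\sum j_s$ lies in $W_{\sum(2^{j_s}-1)}\subseteq W_{2^j-1}$. Hence $u^{-1}\in T$.

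\emph{$\mathfrak m^3\subseteq cT$.} An element of $\mathfrak m^3$ is a sum of products $y_1y_2y_3$ with each $y_s\in\mathfrak m$. Such a product has $c$-order at least $3$, so I write it as $c\cdot g$ and check $g\in T$. Its coefficient of $c^{j}$ is a coefficient of $c^{j+1}$ in the product, namely a sum of terms in $W_{\sum(2^{j_s}-1)}$ with $j_s\ge1$ and $\sum j_s=j+1$. I need
\[
\textstyle\sum_{s=1}^{3}(2^{j_s}-1)\le 2^{j}-1 .
\]
By convexity, the left side is maximized when the exponents are $(j-1,1,1)$, which gives $2^{j-1}-1+2=2^{j-1}+1$. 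This is at most $2^j-1$ once $j\ge2$, and $j+1\ge3$ forces $j\ge2$. So the inequality holds. With only two factors it fails: for $(j,1)$ one gets $2^j+1$ in index $2^j-1$. That is why a cube is needed.

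\emph{$T_c$ is a field.} Given a nonzero $a=\sum a_jc^j$, I factor out the lowest power, $c^{e}$. Because shifting down increases the allowed index, $\sum_j a_{j+e}c^j\in T$ is not automatic, so I will not shift. Instead I argue directly. Let $a_e\ne0$ be the leading coefficient, with $a_e\in W_{N}$ and $N=2^e-1$. Choose $M$ with $a_e^{-1}\in W_M$. I then expect $c^{m}a_e^{-1}\cdot(\text{unit series})$ to land in $T$ for some large $m$, using geometric inversion with the estimates above. Concretely, $c^{m}a^{-1}$ has coefficients built from $a_e^{-1}$ raised to the powers used in the geometric series. The growth $2^j$ dominates the linear growth $M\cdot(\text{number of factors})+\sum(2^{j_s}-1)$, and a large shift $m$ absorbs the constants. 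This index bookkeeping is the main obstacle. To handle it, I will bound the index of the degree-$j$ coefficient of $a^{-1}$ by $C(j+1)+2^{j+e}$-type terms and compare with $2^{j+m}-1$.

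\emph{$A_c=H_c$.} Both rings sit inside $B_0[[c]]$ after localization, and $A=T[[x]]$ maps into $H$. For $A\subseteq H$, a series $\sum_n(\sum_j a_{nj}c^j)x^n$ regroups as $\sum_j b_jc^j$ with $b_j=\sum_n a_{nj}x^n\in W_{2^j-1}\otimes F$, so $w(b_j)\le 2^j$. Conversely, an element of $H$ with $w(b_j)\le M2^j$ lies in $E_{M2^j}$. Multiplying by $c^m$ with $2^{j+m}-1\ge M2^j$, which only requires $2^m\ge M+1$, puts each $x$-coefficient into $T$. Hence $c^mh\in A$ for every $h\in H$. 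This gives $H\subseteq A_c$, and therefore $A_c=H_c$.
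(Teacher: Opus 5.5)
Your proposal is correct and follows essentially the paper's own argument: the bound $\sum_s(2^{j_s}-1)\le 2^j-1$ for the inverse recurrence, the extremal pattern $(j-1,1,1)$ for $\mathfrak m^3\subseteq cT$, factoring $a=c^e a_e(1+\sum_{j\ge1} b_jc^j)$ with exponentially bounded weights and then a large $c$-shift to show $T_c$ is a field, and regrouping coefficients plus a shift for $A_c=H_c$. One step is left implicit: $\sum_n a_{nj}x^n\in W_{2^j-1}\otimes_kF=E_{2^j-1}$ uses finite-dimensionality of $W_{2^j-1}$, which the paper states explicitly; also, in your side remark the two-factor index is $2^j$, not $2^j+1$, which changes nothing.
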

	\begin{proof}
		The weight is subadditive under multiplication. For positive \(j_1+\cdots+j_s=n\),
		\(\sum_\nu(2^{j_\nu}-1)\le2^n-1\). The usual inverse recurrence therefore stays within the coefficient bounds of \eqref{filtered-T} whenever the constant coefficient is nonzero. This proves locality.
		
		For positive \(i+j+\ell=n\),
		\[
		(2^i-1)+(2^j-1)+(2^\ell-1)\le2^{n-1}-1.
		\]
		The largest sum of powers occurs at \((n-2,1,1)\). This proves \(\mathfrak m^3\subseteq cT\) by dividing a triple product coefficientwise by \(c\).
		
		Any series whose coefficient weights are at most \(M2^j\) enters \(T\) after multiplication by a sufficiently large power of \(c\). If \(0\ne a\in T\), write
		\(a=c^\nu a_\nu(1+\sum_{j\ge1}b_jc^j)\).
		The \(b_j\) have exponentially bounded weights. Its formal reciprocal has the same type of bound: in degree \(n\), each product satisfies
		\(\sum_\ell2^{j_\ell}\le2^n\), with \(\sum j_\ell=n\). Including the fixed weight of \(a_\nu^{-1}\) and shifting shows that \(c^d/a\in T\) for some \(d\). Hence \(T_c\) is a field.
		
		Since \(W_N\) is finite-dimensional, \(W_N[[x]]=W_N\otimes_kF=E_N\). Reordering coefficients identifies
		\(A=\{\sum b_jc^j:b_j\in E_{2^j-1}\}\subseteq H\).
		Every member of \(H\) enters \(A\) after a sufficiently large coefficient shift. Thus \(A_c=H_c\).
	\end{proof}
	
	\begin{lemma}\label{G-regular}
		The ring \(G=A_c\) is regular Noetherian of dimension \(r\). It contains a regular sequence \(a_1,\ldots,a_r\) generating a proper ideal, with \(ca_i\in A\).
	\end{lemma}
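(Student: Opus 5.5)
The plan is to get regularity and the dimension bound from the structural input \eqref{structural}, then to produce the sequence \(a_1,\ldots,a_r\) by hand and use it to show that the dimension is at least \(r\).

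First, regularity. By Lemma \ref{T-properties}, \(G=A_c=H_c\), and \(H\) is Noetherian by \eqref{structural}, so \(G\) is Noetherian. The ring \(B_0=H/cH\) is a localization of \(F[u_1,\ldots,u_r]\) with \(F=k[[x]]\) a DVR, so \(B_0\) is regular. The element \(c\) is a nonzerodivisor of \(H\), since \(H\subseteq B_0[[c]]\) and \(B_0\) is a domain. So for every maximal ideal \(\mathfrak n\ni c\) of \(H\), the ring \(H_{\mathfrak n}/cH_{\mathfrak n}\) is regular local and \(c\) is regular, which makes \(H_{\mathfrak n}\) regular. Because \(c\in\Jac(H)\), every maximal ideal contains \(c\). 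Hence \(H\) is regular, and so is its localization \(G=H_c\). Moreover \(G\) is a domain, being a localization of \(H\subseteq B_0[[c]]\). The upper bound \(\dim G\le r\) is part of \eqref{structural}.

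Second, the sequence. The naive choice \(a_i=u_i\) fails, because each \(u_i\) is a unit of \(B_0\) and therefore a unit of \(H\). I would instead take elements with a pole in \(c\), namely
\[
a_i=u_i-c^{-1},\qquad ca_i=cu_i-1 .
\]
Here \(ca_i\in A\): the constant coefficient \(-1\) lies in \(W_0=k\), and the \(c^1\) coefficient \(u_i\) lies in \(W_1\), since its numerator has degree \(1\le\deg D_1+1\). If this choice turns out to be awkward, the fallback is the same pattern with \(u_i\) replaced by suitable elements of \(W_{2^j-1}\) placed at a higher power \(c^j\).

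Third, properness. The key claim is \((a_1,\ldots,a_r)G\ne G\). I would try to exhibit a ring homomorphism out of \(H\) that sends \(c\) to a nonzero nonunit \(\tau\) of some complete field-like target and sends \(u_i\) to \(\tau^{-1}\). The point is that the exponential weight bounds \(w(b_j)\le M2^j\) are what should make the substituted \(c\)-series converge. I expect this to be the main obstacle. It needs a valuation-type estimate on \(W_N\) after the substitution \(u_i\mapsto\tau^{-1}\): a numerator of degree at most \(\deg D_N+N\) over a denominator \(D_N\) that stays bounded away from zero. Irreducible factors of \(D_N\) might vanish near the substituted point, so the substitution point may have to be perturbed generically, for example \(u_i\mapsto\tau^{-1}+\gamma_i\) with transcendental constants \(\gamma_i\).

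Finally, the regular-sequence property. Once the ideal \(I=(a_1,\ldots,a_r)G\) is proper, choose a maximal ideal \(\mathfrak M\supseteq I\). Krull's height theorem together with \(\dim G\le r\) forces every minimal prime of each \(I_j=(a_1,\ldots,a_j)\) to have height exactly \(j\), provided I also check that each \(a_{j+1}\) lies in no minimal prime of \(I_j\). This can be done by evaluating at the homomorphism above with the coordinates chosen independently. Since the regular ring \(G\) is Cohen--Macaulay, unmixedness then makes \(a_{j+1}\) a nonzerodivisor modulo \(I_j\), so \(a_1,\ldots,a_r\) is a regular sequence. The resulting height-\(r\) prime gives \(\dim G\ge r\), hence \(\dim G=r\).
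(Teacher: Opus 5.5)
There is a genuine gap, and it lies in the choice of the sequence: your elements \(a_i=u_i-c^{-1}\) are units of \(G\).

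The series \(cu_i\) lies in \(T\), since its \(c^0\) coefficient is \(0\in W_0\) and its \(c^1\) coefficient is \(u_i\in W_1\). It has zero constant term, so it lies in the maximal ideal of the local ring \(T\) from Lemma \ref{T-properties}. Hence \(ca_i=cu_i-1\) is a unit of \(T\subseteq A\), and \(a_i=c^{-1}(cu_i-1)\) is a unit of \(G=A_c\). So \((a_1,\ldots,a_r)G=G\).

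This also shows that the homomorphism you plan in the properness step cannot exist. Any ring map with \(c\mapsto\tau\) and \(u_i\mapsto\tau^{-1}\) kills the unit \(1-cu_i\). Equivalently, it would send its inverse \(\sum_j u_i^jc^j\in T\) to the divergent series \(\sum_j 1\). Your fallback fails in the same way: for \(w\in W_{2^j-1}\) with \(j\ge1\), \(c^jw\) is in the maximal ideal of \(T\), so \(w-c^{-j}\) is again a unit.

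The missing idea is to use the \(x\)-direction inside \(B_0\) instead of a pole in \(c\). The ring \(B_0\) inverts only the nonzero polynomials in the \(u_i\), not elements involving \(x\). The paper takes \(a_i=u_i-\varphi_i(x)\) with \(\varphi_1,\ldots,\varphi_r\in x\Q[[x]]\) algebraically independent over \(\Q\). Substituting \(u_i\mapsto\varphi_i\) sends every nonzero polynomial in the \(u_i\) to a nonzero element of \(k((x))\). So it defines \(B_0\to k((x))\), which extends coefficientwise to \(H\to k((x))[[c]]\) and then to \(G\to k((x))((c))\). This map kills every \(a_i\) and preserves \(1\), which proves properness. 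Also \(ca_i\in A\), because \(u_i\) and \(1\) lie in \(W_1\).

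Your regular-sequence step is also not yet an argument. Evaluating at a single homomorphism controls one prime containing \(I_j\), not all minimal primes of \(I_j\). So it does not give \(\operatorname{ht}I_j=j\), and the unmixedness route stalls.

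The paper avoids this with flatness:
\begin{enumerate}[label=\textup{(\arabic*)},nosep]
\item The \(a_i\) form a regular sequence in \(F[u_1,\ldots,u_r]\) by successive substitution, and they remain one in the localization \(B_0\), using properness.
\item \(G\) is flat over \(B_0\), because the completion \(B_0[[c]]\) is faithfully flat over the Noetherian ring \(H\) and flat over \(B_0\).
\item Hence the sequence stays regular in \(G\), and properness gives \(\dim G\ge r\).
\end{enumerate}
Your proof that \(H\) and \(G\) are regular, via \(c\in\Jac(H)\) and \(H/cH=B_0\), agrees with the paper's.
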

	\begin{proof}
		Choose \(\varphi_1,\ldots,\varphi_r\in x\Q[[x]]\) algebraically independent over \(\Q\). Such a choice exists by induction: the field generated by finitely many chosen series over \(\Q\) is countable, as is its algebraic closure inside \(\Q((x))\), whereas \(x\Q[[x]]\) is uncountable. Put \(a_i=u_i-\varphi_i(x)\).
		
		Successive substitution in \(F[u_1,\ldots,u_r]\) shows that the \(a_i\) form a regular sequence. Algebraic independence ensures that the denominators in \(k[u_1,\ldots,u_r]\setminus\{0\}\) have nonzero images under full substitution. Thus the sequence stays regular in \(B_0\), and substitution defines \(B_0\to k((x))\). It extends coefficientwise to
		\(H\to k((x))[[c]]\) and then to \(G\to k((x))((c))\), annihilating every \(a_i\) and preserving \(1\). Their ideal in \(G\) is therefore proper. Also \(u_i\in W_1\), so \(ca_i\in A\).
		
		The ring \(B_0\) is regular Noetherian. For each maximal ideal of \(H\), its local ring contains the regular element \(c\) and has regular local quotient modulo \(c\), by \eqref{structural}. A Noetherian local ring with a regular element and regular quotient is regular: the lifted generators give embedding dimension at most the quotient dimension plus one, which equals its dimension. Hence \(H\) and \(G\) are regular.
		
		The completion \(B_0[[c]]\) is faithfully flat over Noetherian \(H\) and flat over \(B_0\). Flatness descends, so \(H\), and then \(G\), are flat over \(B_0\). The regular sequence stays regular in \(G\). Its properness gives \(\dim G\ge r\), while \eqref{structural} gives the reverse inequality. Thus
		\[
		\gld G=\wgd G=r.
		\]
	\end{proof}
	
	\begin{lemma}\label{truncations}
		One has \(\FPD(T)=\FFD(T)=1\). For every \(j\ge1\), if \(C_j=A/c^jA\), then \(\FPD(C_j)=\FFD(C_j)=1\).
	\end{lemma}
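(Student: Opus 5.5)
The plan is to reduce both assertions to the flat formula of Theorem \ref{flat-formula} for the flat values and to the lifting statement \eqref{almost-perfect} for the projective values. In each case the quotients and the localization involved will turn out to be local rings with nilpotent maximal ideal, or fields. Such rings are perfect, so by Bass's theorem (as recalled before Theorem \ref{main-perfect}) they have both finitistic dimensions zero.

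\emph{The ring \(T\).} By Lemma \ref{T-properties}, \(T\) is local with maximal ideal \(\mathfrak m\), the element \(c\) is a regular nonunit, and \(T_c\) is a field. For every \(j\ge1\) we have \(\mathfrak m^{3j}\subseteq c^jT\). Hence \(T/c^jT\) is local with nilpotent maximal ideal, and therefore perfect. Theorem \ref{flat-formula} then gives
\[
\FFD(T)=\max\{\FFD(T/cT)+1,\FFD(T_c)\}=\max\{1,0\}=1 .
\]
For the projective value, Lemma \ref{shift} applied to the residue field \(T/cT\)-module \(k\) gives \(\FPD(T)\ge1\). Conversely, let \(\pd_TM<\infty\). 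Then \(\fd_TM\le\FFD(T)=1\). Since \(T_c\) and every \(T/c^jT\) are perfect, \eqref{almost-perfect} with \(s=c\) gives \(\pd_TM\le1\).

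\emph{The rings \(C_j\).} Note that \(C_j=A/c^jA\cong R[[x]]\) with \(R=T/c^jT\). This ring \(R\) is local with maximal ideal \(\mathfrak n\) satisfying \(\mathfrak n^e=0\) for some \(e\). One could simply quote Theorem \ref{main-perfect}(i), but to avoid any dependence on later material I would argue directly with \(s=x\), which is a regular nonunit of \(R[[x]]\). The quotients \(R[[x]]/x^iR[[x]]=R[x]/(x^i)\) are local with nilpotent maximal ideal \((\mathfrak n,x)\), hence perfect.

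The localization \(R((x))\) has the ideal \(\mathfrak nR((x))\). Each coefficient of an \(e\)-fold product from this ideal is a finite sum of products of \(e\) elements of \(\mathfrak n\), so \((\mathfrak nR((x)))^e=0\). The quotient \(R((x))/\mathfrak nR((x))=k((x))\) is a field. Since the ideal is nilpotent, units lift from this quotient, so \(R((x))\) is local with nilpotent maximal ideal, hence perfect.

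With these facts, Theorem \ref{flat-formula} gives \(\FFD(C_j)=\max\{0+1,0\}=1\). Lemma \ref{shift} gives \(\FPD(C_j)\ge1\). Finally, \eqref{almost-perfect} with \(s=x\) turns \(\fd\le1\) into \(\pd\le1\), which yields \(\FPD(C_j)\le1\).

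The only real work is verifying the perfectness of the auxiliary rings, and the localization \(R((x))\) is the point most in need of care: one must check that nilpotency of \(\mathfrak n\) survives in Laurent series and that the residue ring is a field. Everything else is a direct invocation of the earlier results.
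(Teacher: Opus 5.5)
Your route is the paper's. You check that \(T/c^jT\), \(T_c\), \(C_j/x^eC_j\) and \((C_j)_x\) are perfect, use Theorem~\ref{flat-formula} for the flat values, and use \eqref{almost-perfect} with \(s=c\), respectively \(s=x\), for the projective upper bounds.

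\textbf{The witness for \(\FPD(T)\ge1\).} This step fails as written: you apply Lemma~\ref{shift} to \(k\). That lemma needs \(\pd_{T/cT}k<\infty\), which is false because \(T/cT\) is not a field. Indeed \(u_1\in W_1\), so \(u_1c\in\mathfrak m\). But every element of \(cT\) has its \(c^1\)-coefficient in \(W_0=k\), so \(u_1c\notin cT\) and \(\mathfrak m\ne cT\). Hence \(T/cT\) is a perfect local ring with nonzero maximal ideal. By Bass and Kaplansky, every \(T/cT\)-module of finite projective dimension is free, and the cyclic module \(k\) is not free. Thus \(\pd_{T/cT}k=\infty\), and Lemma~\ref{shift} says nothing about \(k\).

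The repair is immediate. Take \(M=T/cT\), which is free over \(T/cT\), so Lemma~\ref{shift} gives \(\pd_T(T/cT)=1\). Alternatively use \(T_c\), which has projective dimension exactly one by the telescope and the local-ring argument. The paper uses the analogous quotient \(C_j/xC_j\) in the second part. That is also the module your unspecified lower bound \(\FPD(C_j)\ge1\) should refer to.

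\textbf{The ideal in \((C_j)_x=R((x))\).} This is a smaller point. The ideal \(\mathfrak n\) is not finitely generated: otherwise its image in \(T/cT\) would make that ring of finite length, whereas \(T/cT\) is infinite-dimensional over \(k\). So the extended ideal \(\mathfrak nR((x))\) need not equal the ideal \(\mathfrak n((x))\) of Laurent series with coefficients in \(\mathfrak n\). Your identification of \(R((x))/\mathfrak nR((x))\) with \(k((x))\) is therefore unjustified. Your coefficientwise nilpotence argument applies verbatim to \(\mathfrak n((x))\), whose quotient is \(k((x))\).

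With that ideal and the corrected witness, your argument is complete and agrees with the paper's proof.
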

	\begin{proof}
		Because \(\mathfrak m^{3j}\subseteq c^jT\), the local ring \(T/c^jT\) has nilpotent maximal ideal and is perfect. Since \(T_c\) is a field, Theorem \ref{flat-formula} gives \(\FFD(T)=1\), and \eqref{almost-perfect} gives \(\FPD(T)=1\).
		
		Principal coefficient division gives \(C_j=(T/c^jT)[[x]]\). Its nilpotent coefficient radical extends to a nilpotent ideal, with quotient \(k[[x]]\). Thus \((C_j)_x\) is a nilpotent extension of \(k((x))\) and is perfect. Every \(C_j/x^eC_j=(T/c^jT)[x]/(x^e)\) is perfect as well. Apply Theorem \ref{flat-formula} and \eqref{almost-perfect} with \(x\). The module \(C_j/xC_j\) supplies both lower bounds.
	\end{proof}
	
	\begin{proposition}\label{seed-values}
		For every \(r\ge2\),
		\[
		\FPD(T_r)=\FFD(T_r)=1,\qquad
		\FFD(T_r[[x]])=r,\qquad \FPD(T_r[[x]])=r+1.
		\]
		There is a \(G=A_c\)-module \(N\) with \(\pd_A N=r+1\).
	\end{proposition}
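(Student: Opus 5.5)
The coefficient values are Lemma \ref{truncations}. For the power series ring \(A=T[[x]]\) I will work with the regular element \(c\) of \(A\): it is regular since \(T\subseteq K[[c]]\) is a domain, and it is a nonunit because \(c\in\mathfrak m\). Then \(A/cA=C_1\) and \(A_c=G\).

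\emph{Flat invariant.} Theorem \ref{flat-formula} gives
\[
\FFD(A)=\max\{\FFD(C_1)+1,\FFD(G)\}.
\]
Lemma \ref{truncations} gives \(\FFD(C_1)=1\). Since \(G\) is regular Noetherian of dimension \(r\) (Lemma \ref{G-regular}), we have \(\FFD(G)=\wgd G=r\); the value is attained by \(G/(a_1,\ldots,a_r)\) via its Koszul resolution. Because \(r\ge2\), this yields \(\FFD(A)=r\).

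\emph{Projective lower bound.} Put \(I=(a_1,\ldots,a_r)G\), a proper ideal generated by a \(G\)-regular sequence. I would take \(N=G/I\) localized suitably, or more simply \(N=G/\mathfrak nG\) for a maximal ideal \(\mathfrak n\supseteq I\). Then \(\pd_GN=r\) by the Koszul complex, since \(\Ext^r_G(N,N)\neq0\). Lemma \ref{localization} only gives \(\pd_AN\in\{r,r+1\}\), so the issue is excluding \(r\). I would argue as in the proof of Theorem \ref{main-coherent}, where \(t\) acting surjectively on a module over a local ring prevented freeness. Concretely, since \(ca_i\in A\), set \(b_i=ca_i\) and \(C=A/(b_1,\ldots,b_r)\); here I need \(b_1,\ldots,b_r\) to be \(A\)-regular, which would follow from regularity of the \(a_i\) in \(G\) together with a check that \(A\cap a_iG\) behaves well. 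The key point is that \(C_c=G/I\), and \(c\) is a nonunit of the local-type ring \(C\) acting invertibly, so \(C_c\) is not projective over \(C\) and \(\pd_C C_c=1\). Shifting by the regular sequence with Lemma \ref{shift} then gives \(\pd_AC_c=r+1\). If the \(b_i\) fail to be regular, I would instead compute \(\Ext^{r+1}_A(G/I,-)\) via the telescope resolution \eqref{telescope} tensored with the Koszul complex on the \(b_i\). This gives a length-\((r+1)\) complex whose top cohomology is a \(\varprojlim^1\)-type term, and I would show it is nonzero against a suitable module such as \(A/(b_i)\). \textbf{This step, forcing the extra \(+1\), is the main obstacle.}

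\emph{Projective upper bound.} Let \(\pd_AM<\infty\). By Lemma \ref{localization}, \(\pd_GM_c\le r\), and flat-dimension bounds hold modulo \(c\). I would take \(L=\Omega^{r}_AM\). This is flat: \(\FFD(A)=r\), and a finite-flat-dimension module has flat \(r\)th syzygy. Moreover \(L_c\) is projective over \(G\), since \(\pd_GM_c\le r\). Each \(L/c^jL\) is a flat \(C_j\)-module of finite projective dimension, because \(c^j\) is regular on \(L\) and \(M\). Since \(\FPD(C_j)=1\) and \(C_j\) is \(x\)-adically perfect in the sense used in Lemma \ref{truncations}, I expect flat modules of finite projective dimension over \(C_j\) to be projective. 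To get this I would apply \eqref{strong-flat} with \(x\) over \(C_j\), using perfectness of \((C_j)_x\) and of \(C_j/x^e\) to make the flat pieces projective, and then use that a flat module of \(\pd\le1\) over a ring whose flat modules of finite \(\pd\) are controlled is projective. Once \(L/c^jL\) is projective for every \(j\), \eqref{strong-flat} applied to \(F=L\) with \(s=c\) gives \(\pd_AL\le1\), hence \(\pd_AM\le r+1\). Combined with the lower bound, this gives \(\FPD(A)=r+1\), and the module \(N=C_c\) is the required \(G\)-module.
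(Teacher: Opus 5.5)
Your coefficient values, the flat computation $\FFD(A)=\max\{\FFD(C_1)+1,\FFD(G)\}=r$, and the choice $N=C_c=G/(a_1,\ldots,a_r)G$ all agree with the paper. Both projective bounds, however, have genuine gaps.

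\emph{Lower bound.} The sequence $b_1,\ldots,b_r$ is not $A$-regular, so Lemma~\ref{shift} cannot be iterated.
\begin{itemize}
\item Take $w=c^2u_1^2\in A$. Both $a_1w$ and $a_2w$ lie in $A$, because their only $c$-coefficient $u_1^2a_i$ lies in $E_3$. Hence $b_2(a_1w)=b_1(a_2w)\in b_1A$.
\item But $a_1w\in b_1A$ would force $cu_1^2\in A$, that is, $u_1^2\in W_1=\{p/q_1:\deg p\le\deg q_1+1\}$, which is false.
\end{itemize}
Your fallback has the right shape. The telescope tensored with the Koszul complex is a free $A$-resolution of $N$ of length $r+1$. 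Its top Ext is the cokernel of $\Delta(v)_j=v_j-cv_{j+1}$ on $\prod_jW$, taken modulo $(b_1,\ldots,b_r)$.

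The test module you suggest kills this group. Since the bounds $E_{2^n-1}$ increase with $n$, the series $\sum_{m\ge0}c^mw_{j+m}$ converges in $A$. So every $(w_j)\in\prod_jA$ equals $\Delta(v)$ with $v_j=\sum_{m\ge0}c^mw_{j+m}$. Lifting, $\Delta$ is onto $\prod_jW$ for every quotient $W$ of a finite free module, in particular for $A/(b_i)$ and for $C$.

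The missing idea is to test against the non-complete module $W=C^{(\N_0)}$, where such series leave the direct sum.
\begin{itemize}
\item The paper shows that the class of the standard vectors $(e_j)$ in $\Ext^1_A(G,W)$ is nonzero. A preimage $v$ would give $c^j(1+cy)=0$ in $C$ for $j$ outside the support of $v_0$. Since $1+cy$ is a unit of the local ring $C$, this contradicts $C_c\ne0$.
\item The $a_i$ kill $\Ext^1_A(G,W)$, so the Koszul resolution gives $\Ext^r_G(N,\Ext^1_A(G,W))\ne0$.
\item The $(r,1)$ term of $\Ext^p_G(N,\Ext^q_A(G,W))\Rightarrow\Ext^{p+q}_A(N,W)$ then survives, giving $\Ext^{r+1}_A(N,W)\ne0$.
\end{itemize}

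\emph{Upper bound.} Your expectation that flat $C_j$-modules of finite projective dimension are projective is false. The module $(C_j)_x$ is flat and nonzero, and it has projective dimension exactly one, because $C_j$ is local and the nonunit $x$ acts invertibly on it. Applying \eqref{strong-flat} over $C_j$ only gives $\pd\le1$ for flat modules. So your argument yields $\pd_{C_j}(L/c^jL)\le1$, one short of the hypothesis of \eqref{strong-flat} over $A$. Also, $c$ need not be regular on $M$.

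The fix, and the place where $r\ge2$ is used, is to pass through $Z=\Omega_AM$.
\begin{itemize}
\item $c^j$ is regular on $Z$, so $Z/c^jZ$ has finite, hence at most one, projective dimension over $C_j$.
\item $L/c^jL$ is its $(r-1)$st syzygy, hence projective since $r-1\ge1$.
\end{itemize}
With this change, the rest of your upper-bound argument goes through as in the paper.
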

	\begin{proof}
		The coefficient values are Lemma \ref{truncations}. Theorem \ref{flat-formula} and Lemma \ref{G-regular} give
		\(\FFD(A)=\max\{2,r\}=r\).
		
		For the projective upper bound, let \(\pd_A M<\infty\) and take \(K=\Omega_A^rM\). It is flat by the flat bound, and \(K_c\) is projective because \(\gld G=r\). If \(Z=\Omega_A M\), each \(c^j\) is regular on \(Z\), so a finite projective resolution of \(Z\) remains exact modulo \(c^j\). Hence \(Z/c^jZ\) has finite \(C_j\)-projective dimension, at most one. The module \(K/c^jK\) is its \((r-1)\)st syzygy, and therefore projective, since \(r\ge2\). Equation \eqref{strong-flat} gives \(\pd_AK\le1\), proving \(\pd_AM\le r+1\).
		
		For the lower bound put
		\[
		b_i=ca_i,\quad C=A/(b_1,\ldots,b_r),\quad
		N=C_c=G/(a_1,\ldots,a_r)G,\quad W=C^{(\N_0)}.
		\]
		We have \(N\ne0\) and \(\pd_GN=r\), so \(\pd_AN\le r+1\). The telescope computes
		\[
		E=\Ext_A^1(G,W)=
		\operatorname{Coker}\left[\prod_{j\ge0}W\longrightarrow\prod_{j\ge0}W,\
		(v_j)\longmapsto(v_j-cv_{j+1})\right].
		\]
		The class of the sequence of standard vectors \((e_j)\) is nonzero. Otherwise choose \(j\) outside the finite support of \(v_0\), iterate the recurrence, and take coordinate \(j\). This gives
		\(0=c^j(1+c(v_{j+1})_j)\) in \(C\). The parenthesis is a unit, because \(C\) is local and \(c\in\Jac(C)\). Thus \(c^j=0\), contradicting \(C_c\ne0\).
		
		The group \(E\) is naturally a \(G\)-module through the first Ext argument; the two scalar actions agree. Since \(b_iW=0\), we have \(b_iE=0\), and then \(a_iE=0\). The Koszul resolution gives \(\Ext_G^r(N,E)=E\ne0\). In
		\[
		\Ext_G^p(N,\Ext_A^q(G,W))\Longrightarrow\Ext_A^{p+q}(N,W),
		\]
		only rows \(q=0,1\) occur. The term \((r,1)\) has no incoming higher differential and an outgoing differential lands beyond \(\pd_GN=r\). Hence \(\Ext_A^{r+1}(N,W)\ne0\) and \(\pd_AN=r+1\).
	\end{proof}
	
	\begin{proposition}\label{zero-flat-seed}
		For \(r\ge2\) let \(W_r=T_r\ideal T_r/cT_r\). Then
		\[
		\FFD(W_r)=0,\quad \FPD(W_r)=1,\quad
		\FFD(W_r[[x]])=r,\quad \FPD(W_r[[x]])=r+1.
		\]
	\end{proposition}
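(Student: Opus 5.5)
We apply Theorem \ref{idealization} with \(H=T_r\) and \(s=c\) for the coefficient values, and with \(H=A=T_r[[x]]\) and \(s=c\) for the power series values.

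\textbf{Coefficient values.} By \eqref{ideal-flat}, \(\FFD(W_r)\) is the supremum of \(\fd_T N\) over \(c\)-regular \(T\)-modules \(N\) of finite flat dimension. For such an \(N\), Lemma \ref{truncations} gives \(\fd_T N\le1\). Take a flat resolution \(0\to F_1\to F_0\to N\to0\). Regularity of \(c\) on \(N\) makes the sequence stay exact modulo \(c\). Since \(T/cT\) is perfect, \(N/cN\) is then flat over \(T/cT\). Also \(N_c\) is flat over the field \(T_c\). Lemma \ref{flat-gluing} applies, because \(c\) is regular on \(N\) and on \(T\), so \(N\) is flat. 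Hence \(\FFD(W_r)=0\).

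By Corollary \ref{ideal-bounds}, \(\FPD(W_r)\le\FPD(T)=1\). For the lower bound, take \(N=T_c\). It is \(c\)-regular, and the telescope \eqref{telescope} gives \(\pd_T N\le1\). It is not projective: \(T\) is local, so projectives are free, and the nonunit \(c\) acts surjectively on \(T_c\). Hence \(\pd_T N=1\), and \eqref{ideal-proj} gives \(\FPD(W_r)=1\).

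\textbf{Power series values.} Principal coefficient division, as in Section 4, identifies
\[
W_r[[x]]\cong A\ideal A/cA,
\]
with \(c\) regular in \(A\). For the upper bounds, use \eqref{ideal-flat} and \eqref{ideal-proj}: every admissible \(A\)-module has flat dimension at most \(\FFD(A)=r\) and projective dimension at most \(\FPD(A)=r+1\), by Proposition \ref{seed-values}.

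For the lower bounds, Proposition \ref{seed-values} supplies a \(G\)-module \(N\) with \(\pd_A N=r+1\). As a \(G=A_c\)-module, \(N\) is \(c\)-regular, so \eqref{ideal-proj} gives \(\FPD(W_r[[x]])\ge r+1\). For the flat side, use \(N'=G/(a_1,\ldots,a_r)G\), which is this same \(N\). The Koszul resolution over the regular ring \(G\) gives \(\fd_G N'=r\); its tensor with \(N'\) has zero differentials and nonzero top homology. Lemma \ref{localization} then gives \(\fd_A N'=r\), and \(N'\) is \(c\)-regular. Equivalently, Corollary \ref{ideal-bounds} gives \(\FFD(W_r[[x]])\ge\FFD(G)=r\).

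\textbf{Main obstacle.} The one step needing care is the identification \(W_r[[x]]\cong A\ideal A/cA\). Write a series with coefficients \((t_j,\bar e_j)\) as a pair (an \(A\)-series, an \((A/cA)\)-series). Then check that \((T/cT)[[x]]=A/cA\), using that \(cA\) consists of the series with all coefficients in \(cT\). This is the same principal coefficient division used in Lemma \ref{truncations}. The remaining steps are direct citations.
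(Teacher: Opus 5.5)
Your proposal is correct and follows essentially the same route as the paper. The paper also reads all four values off Theorem~\ref{idealization} and Corollary~\ref{ideal-bounds}, applied to \(T_r\ideal T_r/cT_r\) and to the coefficientwise identification \(W_r[[x]]=A\ideal A/cA\), with the \(c\)-invertible witnesses \((W_r)_c\) (your \(T_c\), transferred) and the module \(N\) of Proposition~\ref{seed-values}; the only differences are cosmetic, namely that you re-derive the coefficient flat bound by flat gluing instead of quoting Corollary~\ref{ideal-bounds}, and you bound \(\FFD(W_r[[x]])\) by \(\FFD(A)=r\) rather than by \(\max\{\FFD(A/cA),\FFD(G)\}=r\).
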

	\begin{proof}
		Corollary \ref{ideal-bounds} gives the coefficient flat value zero because \(T_r/cT_r\) and \((T_r)_c\) have flat finitistic dimension zero. It gives \(\FPD(W_r)\le1\). The nonzero field \((W_r)_c\) has projective dimension one by the telescope and the impossibility of freeness over local \(W_r\).
		
		Coefficientwise, \(W_r[[x]]=A\ideal A/cA\). Corollary \ref{ideal-bounds} bounds its flat invariant by \(\max\{1,r\}=r\), and its localization \(G\) attains this bound. Its projective invariant is at most \(\FPD(A)=r+1\). The \(c\)-invertible module \(N\) of Proposition \ref{seed-values} transfers unchanged through Theorem \ref{idealization}, giving the opposite bound.
	\end{proof}
	
	\subsection{A perfect seed for simultaneous infinite values}
	The infinite pairs admit one construction for both invariants. Fix a field \(k\) and, for \(i\ge1\), put
	\begin{equation}\label{infinite-blocks}
		C_i=k[t_i,y_{1,i},\ldots,y_{i,i}]/(t_i,y_{1,i},\ldots,y_{i,i})^{i+1},
		\qquad \mathfrak m_i=(t_i,y_{1,i},\ldots,y_{i,i})C_i.
	\end{equation}
	Define the unital \(k\)-algebra
	\begin{equation}\label{infinite-seed}
		B_\infty=k\cdot1+\bigoplus_{i\ge1}\mathfrak m_i
		\subseteq\prod_{i\ge1}C_i.
	\end{equation}
	The scalar part is diagonal, the radical parts have finite support, and distinct radical blocks annihilate each other. Thus multiplication is
	\[
	(\lambda,(u_i))(\mu,(v_i))=(\lambda\mu,(\lambda v_i+\mu u_i+u_iv_i)).
	\]
	The notation \(B_\infty\) is reserved for this seed and is independent of the ring \(B\) in Section~4.
	
	\begin{lemma}\label{infinite-perfect}
		The ring \(B_\infty\) is local and perfect, with \(T\)-nilpotent, nonnilpotent maximal ideal
		\(\mathfrak m=\bigoplus_i\mathfrak m_i\). Moreover
		\[
		\Spec B_\infty=\{\mathfrak m\},\qquad
		\FFD(B_\infty)=\FPD(B_\infty)=0.
		\]
	\end{lemma}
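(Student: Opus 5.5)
We first check that \(B_\infty\) is a subring of \(\prod_i C_i\) and that \(\mathfrak m=\bigoplus_i\mathfrak m_i\) is an ideal with \(B_\infty/\mathfrak m\cong k\). The displayed multiplication rule shows that \(\mathfrak m\) is closed under multiplication by \(B_\infty\). The quotient map \((\lambda,(u_i))\mapsto\lambda\) is a surjective ring homomorphism onto \(k\) with kernel \(\mathfrak m\). To get locality, we show every element outside \(\mathfrak m\) is a unit. Write such an element as \(\lambda(1+u)\) with \(\lambda\ne0\) and \(u\in\mathfrak m\). Since \(u\) has finite support, it lies in \(\bigoplus_{i\le n}\mathfrak m_i\) for some \(n\). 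Each \(\mathfrak m_i\) satisfies \(\mathfrak m_i^{i+1}=0\) and distinct blocks annihilate each other, so \(u^{n+1}=0\). Hence \(1+u\) has the inverse \(\sum_{\ell\le n}(-u)^\ell\), which lies in \(B_\infty\). So \(B_\infty\) is local with maximal ideal \(\mathfrak m\).

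Next, \(\mathfrak m\) is nil by the same computation: every element of \(\mathfrak m\) is nilpotent. Therefore every prime ideal contains \(\mathfrak m\), and since \(\mathfrak m\) is maximal, \(\Spec B_\infty=\{\mathfrak m\}\).

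For \(T\)-nilpotence, take a sequence \(a_1,a_2,\ldots\in\mathfrak m\). The support of \(a_1\) is contained in the finite set \(\{1,\ldots,n\}\). Because blocks annihilate each other, the product \(a_1a_2\cdots a_s\) is supported inside the support of \(a_1\). Its \(i\)-th component is a product of \(s\) elements of \(\mathfrak m_i\), which vanishes once \(s\ge i+1\). Taking \(s=n+1\) gives \(a_1\cdots a_{n+1}=0\).

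Non-nilpotence is the one point needing a witness, and it is straightforward. For each \(N\), the element \(t_N\in\mathfrak m_N\) satisfies \(t_N^N\ne0\) in \(C_N\), because \(t_N^N\) has degree \(N<N+1\). Hence \(\mathfrak m^N\ne0\) for every \(N\), so \(\mathfrak m\) is not nilpotent.

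Finally, \(B_\infty\) is a local ring with \(T\)-nilpotent maximal ideal, so it is perfect by the characterization recalled before Theorem \ref{main-perfect}. By Bass's theorem \cite{Bass,BM}, \(\FPD(B_\infty)=0\). Over a perfect ring, flat modules are projective, so every module of finite flat dimension has a flat resolution that is in fact a projective resolution of the same length. Thus \(\fd=\pd\) for such modules, and \(\FFD(B_\infty)=\FPD(B_\infty)=0\).
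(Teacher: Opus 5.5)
Your proof is correct. For the ring-theoretic assertions it follows the paper step by step: locality from the finite geometric series, $\mathfrak m$ nil and hence $\Spec B_\infty=\{\mathfrak m\}$, $T$-nilpotence through the finite support of $a_1$, non-nilpotence via $t_N^N\ne0$, and perfectness from Bass's characterization.

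The one genuine difference is how you obtain $\FPD(B_\infty)=0$. You quote Bass's theorem that commutative perfect rings have $\FPD=0$. The paper instead gives a direct verification:
\begin{itemize}
\item it builds minimal projective resolutions from projective covers, whose projective terms are free by Kaplansky;
\item it uses the nonzero element $s=t_1$, which satisfies $s\mathfrak m=0$;
\item if $\pd M=q>0$, the injective last differential $d_q$ has image in $\mathfrak m P_{q-1}$, so $d_q(se)=0$ for a basis vector $e$ even though $se\ne0$.
\end{itemize}

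Your citation is legitimate, since the paper itself recalls Bass's result in the introduction, and it makes the proof shorter. The paper's self-contained argument isolates the actual mechanism, a nonzero element annihilating the maximal ideal. It reuses exactly this for the ring $B'$ in Proposition~\ref{infinite-parameters}, where it only exhibits $t_1^{d_1}$.

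The passage from $\FPD$ to $\FFD$ is the same in both: a flat syzygy is projective over a perfect ring, so $\fd=\pd$ for modules of finite flat dimension.
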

	\begin{proof}
		Every element of \(\mathfrak m\) is nilpotent, since its support is finite. If \(\lambda\ne0\), a finite geometric series inverts \(\lambda+u\), for \(u\in\mathfrak m\). Hence \(\mathfrak m\) is precisely the set of nonunits. For any sequence \(a_1,a_2,\ldots\in\mathfrak m\), let \(F\) be the finite support of \(a_1\). Each initial product is supported in \(F\). If \(N>\max F\), its \(i\)th coordinate belongs to \(\mathfrak m_i^N=0\) for every \(i\in F\). The case \(a_1=0\) is immediate. Thus \(\mathfrak m\) is \(T\)-nilpotent. Bass's characterization of perfect rings applies \cite{Bass}. On the other hand \(t_i^i\ne0\), so \(\mathfrak m\) is not nilpotent. Every prime contains the nil ideal \(\mathfrak m\), proving the assertion about the spectrum.
		
		Here is a direct verification of the two homological values. Successive projective covers over a perfect local ring give a minimal projective resolution, whose differentials have image in the maximal ideal times the preceding term. All projectives are free \cite{Kaplansky}. The nonzero element \(s=t_1\), supported in the first block, satisfies \(s\mathfrak m=0\). If a module has finite positive projective dimension \(q\), its minimal resolution ends in a nonzero free module \(P_q\) and an injective differential \(d_q:P_q\to P_{q-1}\) with image in \(\mathfrak mP_{q-1}\). For a basis vector \(e\in P_q\), one has \(se\ne0\) but \(d_q(se)=0\), a contradiction. To justify termination for arbitrary modules, the cover of a projective syzygy splits and has superfluous kernel, hence is an isomorphism. Thus every module of finite projective dimension is projective.
		
		If a module has finite flat dimension, its last syzygy in a free resolution is flat. Perfectness makes this syzygy projective, so the module has finite projective dimension and is itself projective. This proves both zero values.
	\end{proof}
	
	Put
	\[
	A_\infty=B_\infty[[x]],\qquad D_\infty=\prod_{i\ge1}C_i[[x]],
	\qquad f=\sum_{i\ge1}t_ix^i\in xA_\infty.
	\]
	The coordinate maps identify \(A_\infty\) with a subring of \(D_\infty\). At each degree the coordinate coefficients have a common scalar part and only finitely many radical deviations. The set of deviations may vary with the degree. In \(D_\infty\) the \(i\)th coordinate of \(f\) is \(t_ix^i\).
	
	\begin{lemma}\label{infinite-localization}
		One has \(fD_\infty\subseteq A_\infty\), and inclusion induces an isomorphism
		\[
		U_\infty:=(A_\infty)_f\ \cong\ (D_\infty)_f\ne0.
		\]
	\end{lemma}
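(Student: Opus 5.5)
The plan is to work entirely inside \(D_\infty\), using the coordinate description of \(A_\infty\) recorded just before the statement. There are three steps: the containment \(fD_\infty\subseteq A_\infty\), a formal localization argument for the isomorphism, and a check that \(f\) is not nilpotent in \(D_\infty\).

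For the containment, take an arbitrary \(g=(g_i)_i\in D_\infty\) with \(g_i\in C_i[[x]]\). The \(i\)th coordinate of \(fg\) is \(t_ix^ig_i\). Its \(x^n\)-coefficient vanishes for \(i>n\), and for \(i\le n\) it lies in \(t_iC_i\subseteq\mathfrak m_i\). So at each degree \(n\), the coordinate coefficients of \(fg\) have common scalar part \(0\) and radical deviations supported in the finite set \(\{1,\ldots,n\}\). By the description of the image of \(A_\infty\) in \(D_\infty\), this is exactly the condition for the \(x^n\)-coefficient to lie in \(\bigoplus_i\mathfrak m_i\subseteq B_\infty\). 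Hence \(fg\in A_\infty\). The support is allowed to grow with \(n\), which is why \(fg\) is not itself a finite-support object; this is consistent with the description.

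For the isomorphism, regard \(A_\infty\subseteq D_\infty\) as an inclusion of \(A_\infty\)-modules. Localization at \(f\) is exact, and \((D_\infty)_f\) as an \(A_\infty\)-module localization coincides with the ring localization of \(D_\infty\) at the image of \(f\). So the induced ring map \((A_\infty)_f\to(D_\infty)_f\) is injective. For surjectivity, write an element of \((D_\infty)_f\) as \(g/f^k\). Then
\[
g/f^k=(fg)/f^{k+1},
\]
and \(fg\in A_\infty\) by the first step, so the element lies in the image.

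Finally, \((D_\infty)_f\ne0\) exactly when \(f\) is not nilpotent in \(D_\infty\). For each \(n\), the \(n\)th coordinate of \(f^n\) is \(t_n^nx^{n^2}\). This is nonzero because \(t_n^n\) has degree \(n<n+1\) and so survives in \(C_n\). Thus \(f^n\ne0\) for every \(n\).

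I expect no step to be a real obstacle. The only point needing care is the first step: one must match the degreewise finite-support condition precisely with the stated characterization of \(A_\infty\) inside \(D_\infty\), rather than imposing a uniform finite support.
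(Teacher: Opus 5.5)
Your proposal is correct and follows the paper's proof step for step. You use the same degreewise support computation for \(fD_\infty\subseteq A_\infty\), the same rewriting \(g/f^k=(fg)/f^{k+1}\) for surjectivity, injectivity from the inclusion \(A_\infty\subseteq D_\infty\) (you cite exactness of localization, while the paper notes that a power of \(f\) kills the numerator), and non-nilpotence of \(f\) from the nonzero coordinate \(t_n^nx^{n^2}\) of \(f^n\).
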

	\begin{proof}
		For \(h=(\sum_{q\ge0}h_{i,q}x^q)_i\in D_\infty\), the coefficient of \(x^m\) in \(fh\) is supported in the finitely many blocks \(i\le m\) and has \(i\)th coordinate \(t_ih_{i,m-i}\in\mathfrak m_i\). Hence \(fh\in A_\infty\). A fraction \(h/f^q\) in \((D_\infty)_f\) equals \((fh)/f^{q+1}\) and therefore comes from \((A_\infty)_f\). If a fraction from \(A_\infty\) vanishes after mapping to \((D_\infty)_f\), a power of \(f\) kills its numerator in \(D_\infty\) and thus in \(A_\infty\). This proves injectivity. Finally, for every \(N\ge1\) and \(i\ge N\), the \(i\)th coordinate of \(f^N\) is \(t_i^Nx^{iN}\ne0\). The element \(f\) is not nilpotent, so the localization is nonzero.
	\end{proof}
	
	\begin{proposition}\label{infinite-regular}
		For \(j\ge1\), let \(b_j\in D_\infty\) have \(i\)th coordinate \(y_{j,i}\) if \(i\ge j\), and zero otherwise. Every finite initial sequence \(b_1,\ldots,b_r\) is regular in \(U_\infty\).
	\end{proposition}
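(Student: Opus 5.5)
The plan is to work entirely inside \(D_\infty=\prod_{i\ge1}C_i[[x]]\), using the identification \(U_\infty\cong(D_\infty)_f\) of Lemma \ref{infinite-localization}. The \(i\)th coordinate of \(f\) is \(t_ix^i\), and \(t_i^{i+1}=0\) in \(C_i\). So an element \(h\in D_\infty\) becomes zero in \((D_\infty)_f\) exactly when there is a single \(N\) with \(t_i^Nh_i=0\) for every \(i\); the cancellation of \(x^{iN}\) is harmless because \(x\) is regular on each \(C_i[[x]]\). This turns every question about \(U_\infty\) into a coordinatewise question, with the one requirement that the exponent \(N\) be uniform in \(i\).

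First, I will describe the quotients by initial segments. Fix \(j\le r+1\). Because \(b_1,\ldots,b_{j-1}\) are finitely many elements of a product ring, the ideal they generate is the product of the coordinate ideals. Its \(i\)th coordinate is \((y_{1,i},\ldots,y_{j-1,i})\) for \(i\ge j-1\), and smaller for the finitely many \(i<j-1\). Hence \(D_\infty/(b_1,\ldots,b_{j-1})\cong\prod_iQ_i\). For \(i\ge j\),
\[
Q_i=P_i[[x]],\qquad P_i=k[t_i,y_{j,i},\ldots,y_{i,i}]/(t_i,y_{j,i},\ldots,y_{i,i})^{i+1}.
\]
Since localization is exact, \(U_\infty/(b_1,\ldots,b_{j-1})U_\infty\cong(\prod_iQ_i)_f\). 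The finitely many coordinates \(i<j\) are killed by \(f^{j}\), because \(t_i^{i+1}=0\). They can therefore be discarded after raising the power of \(f\).

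The key step is regularity of \(b_j\) on \((\prod_iQ_i)_f\). Suppose \(b_jh\) dies there, so that \(y_{j,i}t_i^Nh_i=0\) in \(Q_i\) for all \(i\ge j\), with one fixed \(N\). Then \(P_i\) is a truncated polynomial ring whose defining ideal is a monomial ideal, and we work coefficientwise in \(x\). Multiplication by the variable \(y_{j,i}\) kills a polynomial exactly when every monomial in it has degree at least \(i\). Hence \(t_i^Nh_i\) has all \(x\)-coefficients in \(\mathfrak m^i\), and so \(t_i^{N+1}h_i\in\mathfrak m^{i+1}=0\). The exponent \(N+1\) is uniform in \(i\), so \(f^{N+1}h\) vanishes in \(\prod_{i\ge j}Q_i\). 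Enlarging the power to cover the finitely many small coordinates, \(h=0\) in the localization. This uniform degree argument is the main point, and it is the step I would check most carefully, in particular the claim that the annihilator of \(y_{j,i}\) in \(P_i\) is \(\mathfrak m^i\) (the image of \(\mathfrak m_i^i\)).

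Finally, I will check properness, namely \(U_\infty/(b_1,\ldots,b_r)\ne0\). This quotient is \((\prod_iQ_i)_f\) with \(j=r+1\). Its unit element is nonzero unless some \(f^N\) kills every coordinate. But for \(i\ge\max(N,r+1)\) the coordinate \(t_i^Nx^{iN}\) is nonzero in \(Q_i\), since \(t_i^N\notin\mathfrak m^{i+1}\) when \(N\le i\). This mirrors the last step of Lemma \ref{infinite-localization}. Together with the previous paragraph, applied for each \(j=1,\ldots,r\), this shows that \(b_1,\ldots,b_r\) is a regular sequence in \(U_\infty\).
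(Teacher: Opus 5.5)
Your proposal is correct and follows essentially the same route as the paper. Both arguments describe the quotients coordinatewise, using that finitely generated ideals in a product are products of coordinate ideals. Both then use the monomial computation that the annihilator of \(y_{j,i}\) is killed by \(t_i\), obtain a uniform exponent of \(f\) by disposing of the finitely many small coordinates with \(t_i^{i+1}=0\), and prove properness from the nonzero coordinate \(t_i^Nx^{iN}\) for large \(i\). The only difference is cosmetic: you argue elementwise in the localization, whereas the paper states the bound \(f^j\Ann_{E_j}(b_j)=0\) and then invokes exactness of localization.
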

	\begin{proof}
		A finitely generated ideal in a product is the product of its coordinate ideals. Indeed, coordinatewise expressions using the same finite set of generators collect into finitely many tuples of coefficients. Consequently, with
		\(E_j=D_\infty/(b_1,\ldots,b_{j-1})D_\infty\), we may compute kernels coordinatewise. For \(i\ge j\), its \(i\)th factor is
		\[
		\left(k[t_i,y_{j,i},\ldots,y_{i,i}]/(t_i,y_{j,i},\ldots,y_{i,i})^{i+1}\right)[[x]].
		\]
		Reduction commutes with power series in this calculation: the coefficientwise kernel is generated by the finitely many removed variables, and coefficient expressions can be collected into series.
		
		In a truncated polynomial ring \(k[t,z_1,\ldots,z_q]/(t,z_1,\ldots,z_q)^{\ell+1}\), the annihilator of \(z_1\) is the span of the monomials of total degree \(\ell\). Multiplication by \(z_1\) sends distinct monomials of smaller degree to distinct surviving monomials. Thus multiplication by \(t\) kills this annihilator, and the same statement holds coefficientwise for series. On the coordinates \(i\ge j\), the annihilator of \(b_j\) in \(E_j\) is therefore killed by \(f\). On the coordinates \(i<j\), the power \((t_ix^i)^j\) is zero because \(\mathfrak m_i^{i+1}=0\). We obtain the single uniform estimate
		\begin{equation}\label{infinite-uniform}
			f^j\Ann_{E_j}(b_j)=0.
		\end{equation}
		Exactness of localization now shows that multiplication by \(b_j\) is injective on
		\((E_j)_f=U_\infty/(b_1,\ldots,b_{j-1})U_\infty\).
		
		It remains to prove properness. If \((b_1,\ldots,b_r)U_\infty=U_\infty\), some \(N\ge0\) satisfies
		\(f^N\in(b_1,\ldots,b_r)D_\infty\). Choose \(i>\max\{N,r\}\). After removing \(y_{1,i},\ldots,y_{r,i}\) in that coordinate, the image \(t_i^Nx^{iN}\) is still nonzero: the monomial \(t_i^N\) has degree at most \(i\). This contradicts ideal membership. The ideal is proper, completing the proof.
	\end{proof}
	
	\begin{proposition}\label{infinite-witnesses}
		For every \(r\ge1\), the nonzero module
		\(M_r=U_\infty/(b_1,\ldots,b_r)U_\infty\) satisfies
		\[
		\fd_{A_\infty}M_r=\fd_{U_\infty}M_r=\pd_{U_\infty}M_r=r,
		\qquad r\le\pd_{A_\infty}M_r\le r+1.
		\]
		Furthermore
		\[
		\Tor_i^{A_\infty}(M_r,M_r)\cong M_r^{\binom ri}\quad(0\le i\le r),
		\qquad \Tor_i^{A_\infty}(M_r,M_r)=0\quad(i>r).
		\]
		Each \(M_r\) is countably presented, but is not finitely generated, over \(A_\infty\). In particular
		\[
		\FFD(A_\infty)=\FPD(A_\infty)=\infty.
		\]
	\end{proposition}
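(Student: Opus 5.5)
By Proposition \ref{infinite-regular}, the sequence \(b_1,\ldots,b_r\) is \(U_\infty\)-regular and generates a proper ideal, so \(M_r\ne0\). I will read off everything over \(U_\infty\) from the Koszul complex \(K_\bullet=K_\bullet(b_1,\ldots,b_r;U_\infty)\), which is then a free resolution of length \(r\). This gives \(\pd_{U_\infty}M_r\le r\) and \(\fd_{U_\infty}M_r\le r\). Tensoring \(K_\bullet\) with \(M_r\) kills every \(b_j\), so all differentials vanish and
\[
\Tor_i^{U_\infty}(M_r,M_r)\cong M_r^{\binom ri}\quad(0\le i\le r),
\]
with vanishing above \(r\). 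In particular \(\Tor_r^{U_\infty}(M_r,M_r)=M_r\ne0\), so \(\fd_{U_\infty}M_r=\pd_{U_\infty}M_r=r\).

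To pass to \(A_\infty\), I will use that \(U_\infty=(A_\infty)_f\) is a flat \(A_\infty\)-algebra with \(U_\infty\otimes_{A_\infty}U_\infty=U_\infty\). Hence for \(U_\infty\)-modules \(P,Q\) there is a natural isomorphism \(\Tor_i^{A_\infty}(P,Q)\cong\Tor_i^{U_\infty}(P,Q)\): resolve \(P\) over \(A_\infty\), and note that tensoring with \(Q\) factors through the localization, which is exact. This gives the stated Tor formula over \(A_\infty\). Lemma \ref{localization} gives \(\fd_{A_\infty}M_r=\fd_{U_\infty}M_r=r\), and, since \(U_\infty=(A_\infty)_f\) is a principal localization, \(r\le\pd_{A_\infty}M_r\le r+1\).

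Countable presentation follows from two facts:
\begin{itemize}
\item \(U_\infty\) is countably presented over \(A_\infty\) by the telescope \eqref{telescope}, which uses countably generated free modules;
\item \(M_r\) is the cokernel of \(U_\infty^r\to U_\infty\).
\end{itemize}
Composing these presentations, for instance via the horseshoe or a mapping cone, gives a presentation by countable free modules.

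For non-finite-generation I will argue as follows. Suppose \(M_r\) were finitely generated over \(A_\infty\). Since \(f\in xA_\infty\subseteq\Jac(A_\infty)\), because \(1-xa\) is invertible, and since \(f\) acts bijectively on \(M_r\), we get \(M_r=fM_r\). Nakayama then forces \(M_r=0\), a contradiction. This is the cleanest route, and it needs no finer analysis.

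Finally, the modules \(M_r\) have finite flat dimension \(r\) and finite projective dimension at least \(r\), for every \(r\ge1\). Hence \(\FFD(A_\infty)=\FPD(A_\infty)=\infty\).

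The main obstacle I expect is making the identification \(\Tor^{A_\infty}=\Tor^{U_\infty}\) precise for modules that are not finitely generated. It should reduce to flatness of localization together with \(M_r\otimes_{A_\infty}U_\infty=M_r\). Everything else is a formal consequence of Proposition \ref{infinite-regular} and Lemma \ref{localization}.
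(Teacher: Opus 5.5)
Your proposal is correct and follows essentially the same route as the paper: the Koszul resolution over \(U_\infty\), Lemma \ref{localization} together with flat base change along \(A_\infty\to U_\infty\) to pass to \(A_\infty\), the telescope for the countable presentation, and Nakayama applied to \(fM_r=M_r\) with \(f\in\Jac(A_\infty)\). The differences are cosmetic. The paper writes the countable presentation explicitly as \((A_\infty/(fb_1,\ldots,fb_r)A_\infty)_f\) with generators \(e_q\), whereas you obtain it abstractly as the cokernel of a map from a countably generated module into a countably presented one; that argument is valid.
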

	\begin{proof}
		The Koszul complex on the regular sequence of Proposition~\ref{infinite-regular} is a finite free \(U_\infty\)-resolution of \(M_r\). Its exactness follows by induction: adjoining the last element forms the mapping cone of multiplication by that element, and injectivity on the preceding quotient kills the positive homology. This argument does not require Noetherianity. On tensoring with \(M_r\), all differentials vanish, and the top term is \(M_r\ne0\). Hence its flat and projective dimensions over \(U_\infty\) are exactly \(r\).
		
		Lemma~\ref{localization} gives the flat equality over \(A_\infty\) and the finite projective upper bound \(r+1\). Localization of projective resolutions gives the lower bound \(r\). The Koszul complex is also a flat \(A_\infty\)-resolution, since \(U_\infty\) is a localization of \(A_\infty\). The identity \(U_\infty\otimes_{A_\infty}M_r=M_r\) gives the stated Tor computation over \(A_\infty\) itself. Letting \(r\) increase proves that both sets of finite dimensions are unbounded.
		
		For the presentation, put \(a_j=fb_j\in A_\infty\). Since \(f\) is invertible in \(U_\infty\),
		\[
		M_r\cong\bigl(A_\infty/(a_1,\ldots,a_r)A_\infty\bigr)_f.
		\]
		Generators \(e_q\), for \(q\ge0\), and relations
		\[
		e_q-fe_{q+1}=0,\qquad a_je_q=0\quad(q\ge0,\ 1\le j\le r)
		\]
		therefore give a countable presentation. Finally \(f\in xA_\infty\subseteq\Jac(A_\infty)\) and \(fM_r=M_r\). Finite generation would force \(M_r=0\) by Nakayama's lemma, a contradiction.
	\end{proof}
	
	For example, the flat \(A_\infty\)-resolutions for \(r=1\) and \(r=2\) are
	\[
	0\longrightarrow U_\infty\xrightarrow{b_1}U_\infty\longrightarrow M_1\longrightarrow0
	\]
	and
	\[
	0\longrightarrow U_\infty\xrightarrow{\binom{-b_2}{b_1}}
	U_\infty^2\xrightarrow{(b_1\ b_2)}U_\infty\longrightarrow M_2\longrightarrow0.
	\]
	They give \(\Tor_1^{A_\infty}(M_1,M_1)=M_1\ne0\) and
	\(\Tor_2^{A_\infty}(M_2,M_2)=M_2\ne0\). In higher degrees the same Koszul construction applies. No determination of which endpoint in \([r,r+1]\) equals \(\pd_{A_\infty}M_r\) is needed for the finitistic conclusion.
	
	\begin{remark}\label{infinite-structure}
		Neither \(B_\infty\) nor \(A_\infty\) is coherent. Indeed \(\Ann_{B_\infty}(t_1)=\mathfrak m\), and \(\mathfrak m\) is not finitely generated: finitely many generators involve only finitely many blocks, which multiplication cannot enlarge. Coefficientwise,
		\(\Ann_{A_\infty}(t_1)=\mathfrak m[[x]]\). If this annihilator were finitely generated, evaluation at \(x=0\) would make \(\mathfrak m\) finitely generated. Thus these examples complement, rather than strengthen, the coherent-target example of Theorem~\ref{main-coherent}. The ideal \(\mathfrak m[[x]]\) need not equal \(\mathfrak mA_\infty\).
		
		No flatness assertion about \(B_\infty\to A_\infty\) or \(A_\infty\to D_\infty\) is used. The flat extension is \(A_\infty\to U_\infty\). Nor is localization interchanged with an infinite product: \((D_\infty)_f\) is the localization of the product at one element. Coordinatewise nilpotence alone would not prove injectivity after localization; the uniform exponent in \eqref{infinite-uniform} is essential. The modules above establish big finitistic dimensions and give no automatic conclusion about invariants restricted to finitely generated modules.
	\end{remark}
	
	\subsection{Varying the block parameters}
	The same construction permits independent choices of the number of variables, truncation degrees, and positions of the coefficients of \(f\).
	\begin{proposition}\label{infinite-parameters}
		Let \(d_i,q_i,w_i\) be positive integers such that \(d_i\to\infty\), \(q_i\to\infty\), and for each \(N\) only finitely many \(i\) satisfy \(w_i\le N\). Put
		\[
		H_i=k[t_i,y_{1,i},\ldots,y_{q_i,i}]/(t_i,y_{1,i},\ldots,y_{q_i,i})^{d_i+1},
		\qquad B'=k\cdot1+\bigoplus_i\mathfrak n_i\subseteq\prod_i H_i,
		\]
		where \(\mathfrak n_i\) is the maximal ideal of \(H_i\). Then \(B'\) is perfect and local, and
		\[
		\FFD(B')=\FPD(B')=0,\qquad
		\FFD(B'[[x]])=\FPD(B'[[x]])=\infty.
		\]
	\end{proposition}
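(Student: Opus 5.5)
We will rerun the argument of Lemmas \ref{infinite-perfect}, \ref{infinite-localization} and Propositions \ref{infinite-regular}, \ref{infinite-witnesses}, with the block data \((d_i,q_i,w_i)\) in place of \((i,i,i)\).

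\emph{The coefficient ring.} The proof of Lemma \ref{infinite-perfect} used only that \(\mathfrak n_i\) is nilpotent in each block and that distinct radical blocks annihilate each other. Hence \(B'\) is local, its maximal ideal \(\mathfrak n=\bigoplus_i\mathfrak n_i\) is \(T\)-nilpotent, and \(B'\) is perfect. The element \(s=t_1\) satisfies \(s\mathfrak n=0\), so the minimal-resolution argument gives \(\FPD(B')=0\). Perfectness then gives \(\FFD(B')=0\). These values are also immediate from Bass's theorem once perfectness is known.

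\emph{The power series ring.} Set \(A'=B'[[x]]\subseteq D'=\prod_iH_i[[x]]\) and \(f=\sum_it_ix^{w_i}\). This is a well-defined element of \(A'\): by hypothesis each degree \(m\) receives contributions from only finitely many blocks, namely those with \(w_i=m\). For the same reason, the coefficient of \(x^m\) in \(fh\) is supported on the finitely many \(i\) with \(w_i\le m\), so \(fD'\subseteq A'\). The proof of Lemma \ref{infinite-localization} then gives \(U'=(A')_f\cong(D')_f\). This ring is nonzero, since the \(i\)th coordinate of \(f^N\) is \(t_i^Nx^{Nw_i}\), which is nonzero once \(d_i\ge N\). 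That happens for all large \(i\) because \(d_i\to\infty\).

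\emph{Regular sequences of every length.} For \(j\ge1\), let \(b_j\) have \(i\)th coordinate \(y_{j,i}\) when \(q_i\ge j\), and zero otherwise. Since \(q_i\to\infty\), only finitely many coordinates are zero. We then repeat Proposition \ref{infinite-regular}. On coordinates with \(q_i\ge j\), the annihilator of \(b_j\) in the quotient \(E_j\) consists of top-degree monomials and is killed by \(t_i\), hence by the coordinate of \(f\).

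The step we expect to be the main obstacle is the remaining coordinates. There we need a \emph{uniform} exponent as in \eqref{infinite-uniform}, and it is not \(j\). The coordinates with \(q_i<j\) form a finite set, and on each of them \(t_i^{d_i+1}=0\). So \(f^e\) kills those coordinates with \(e=1+\max\{d_i:q_i<j\}\), and we obtain \(f^{e}\Ann_{E_j}(b_j)=0\) for this finite \(e\). Localization then makes \(b_j\) regular on \(U'/(b_1,\ldots,b_{j-1})U'\).

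For properness, suppose \(f^N\in(b_1,\ldots,b_r)D'\). Choose \(i\) with \(q_i\ge r\) and \(d_i\ge N\). In that coordinate, after setting \(y_{1,i}=\cdots=y_{r,i}=0\), the element \(t_i^Nx^{Nw_i}\) survives, which is a contradiction.

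\emph{Conclusion.} Proposition \ref{infinite-witnesses} then applies verbatim. The modules \(M_r=U'/(b_1,\ldots,b_r)U'\) have \(\fd_{A'}M_r=r\) and \(r\le\pd_{A'}M_r\le r+1\). Letting \(r\) grow gives \(\FFD(B'[[x]])=\FPD(B'[[x]])=\infty\).
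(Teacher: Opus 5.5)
Your proposal is correct and follows the paper's proof essentially step for step. It uses the same element \(f=\sum_i t_ix^{w_i}\) with \(fD'\subseteq A'\), the same sequence \(b_j\), the same uniform exponent from the finitely many coordinates with \(q_i<j\), the same properness check in a coordinate with \(q_i\ge r\) and \(d_i\ge N\), and the same Koszul conclusion. One small slip: \(t_1\) need not annihilate \(\mathfrak n\) once \(d_1\ge2\), so the minimal-resolution argument should use the nonzero socle element \(t_1^{d_1}\), as the paper does; your alternative appeal to Bass's theorem also suffices.
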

	\begin{proof}
		The support of the first element of a radical sequence is finite. An initial product longer than all the truncation degrees in that support is zero. Thus the radical is \(T\)-nilpotent and the ring is perfect local. The nonzero monomial \(t_1^{d_1}\) annihilates its maximal ideal, so the minimal-resolution argument of Lemma~\ref{infinite-perfect} proves the zero values.
		
		Set \(A'=B'[[x]]\), \(D'=\prod_iH_i[[x]]\), and \(f'=\sum_i t_ix^{w_i}\). If weights coincide, the coefficient is the finite sum over those indices. The finiteness assumption on the weights makes this a series in \(A'\) and gives \(f'D'\subseteq A'\): in degree \(m\), only the finitely many indices with \(w_i\le m\) contribute. The fraction argument of Lemma~\ref{infinite-localization} yields \((A')_{f'}=(D')_{f'}\ne0\); nonvanishing follows from the unbounded truncation degrees.
		
		Define \(c_j\in D'\) to have coordinate \(y_{j,i}\) if \(q_i\ge j\), and zero otherwise. For fixed \(j\), there are only finitely many exceptional coordinates with \(q_i<j\). After killing \(c_1,\ldots,c_{j-1}\), the annihilator of \(c_j\) in every other coordinate is killed by \(t_i\), by the monomial calculation above. On an exceptional coordinate a power \((f'_i)^{d_i+1}\) is zero. Taking a single exponent at least one and at least all these finitely many \(d_i+1\) kills the entire kernel. Hence each \(c_j\) becomes regular modulo its predecessors after localization.
		
		If \((f')^N\) belonged to \((c_1,\ldots,c_r)D'\), choose \(i\) sufficiently large that \(q_i\ge r\) and \(d_i\ge N\). Its image in the coordinate quotient is \(t_i^Nx^{Nw_i}\ne0\), a contradiction. Every finite initial sequence is therefore regular and proper. Its Koszul quotient has flat dimension \(r\) over \(A'\) and finite projective dimension between \(r\) and \(r+1\), by the same localization argument as in Proposition~\ref{infinite-witnesses}. Both infinite values follow.
	\end{proof}
	
	\begin{remark}
		The conditions \(d_i\to\infty\) and \(q_i\to\infty\) are tail conditions. Mere unboundedness would not ensure that the exceptional coordinates in the kernel calculation form a finite set with bounded nilpotence degrees. The ring in \eqref{infinite-seed} is the case \(d_i=q_i=w_i=i\).
	\end{remark}
	
	\subsection{Finite products and prescribed pairs}
	For \(T=T_1\times T_2\), the central idempotents \((1,0)\) and \((0,1)\) decompose every \(T\)-module as a pair \((M_1,M_2)\). Exactness, flatness, and projectivity are componentwise. A resolution of the pair has length at most \(h\) precisely when both component modules have resolutions of length at most \(h\) of the same kind. Thus either dimension of \((M_1,M_2)\) is the maximum of its component dimensions, omitting zero components. In taking the finitistic supremum, finite-dimensional modules over either factor may be extended by zero over the other factor. This proves both inequalities in
	\begin{equation}\label{product}
		\FPD(T)=\max_i\FPD(T_i),\qquad
		\FFD(T)=\max_i\FFD(T_i).
	\end{equation}
	Taking components of each coefficient gives the natural ring isomorphism \(T[[x]]=T_1[[x]]\times T_2[[x]]\). Let \(P_n=k[z_1,\ldots,z_n]\), with \(P_0=k\). Regularity and Theorem \ref{main-noeth} give both coefficient values \(n\) and both target values \(n+1\).
	
	\begin{proof}[Proof of Theorem \ref{main-pairs}]
		First suppose \(d<\infty\). For flat dimension take
		\[
		S_{n,d}=
		\begin{cases}
			k,&(n,d)=(0,1),\\
			W_d\times P_n,&d\ge2.
		\end{cases}
		\]
		Proposition \ref{zero-flat-seed} and \eqref{product} give values
		\(\max\{0,n\}=n\) and \(\max\{d,n+1\}=d\).
		
		For projective dimension with \(n\ge1\), take
		\[
		R_{n,d}=
		\begin{cases}
			P_n,&d=n+1,\\
			T_{d-1}\times P_n,&d\ge n+2.
		\end{cases}
		\]
		The second case has \(d-1\ge2\), and Proposition \ref{seed-values} gives values \(n\) and \(d\). The field \(k\) realizes \((0,1)\).
		
		For \(d=\infty\), take the common family
		\begin{equation}\label{infinite-pairs}
			R_{n,\infty}=S_{n,\infty}=
			\begin{cases}
				B_\infty,&n=0,\\
				B_\infty\times P_n,&n\ge1.
			\end{cases}
		\end{equation}
		Lemma~\ref{infinite-perfect} and \eqref{product} give both coefficient values \(n\). Proposition~\ref{infinite-witnesses} and the power series product isomorphism give both infinite target values. The structural assertions at \(n=0\) follow from Lemma~\ref{infinite-perfect} and Remark~\ref{infinite-structure}. If \(k\) is countable, each \(C_i\) is countable, being finite-dimensional over \(k\). Its finite-support direct sum is countable, as are \(B_\infty\), \(P_n\), and their finite products. Choosing \(k=\Q\) proves the countability assertion.
	\end{proof}
	
	\begin{corollary}
		There is no function \(g:\N_0\to\N_0\) such that
		\(\FFD(S[[x]])\le g(\FFD(S))\) for every commutative ring \(S\) with finite \(\FFD(S)\). The analogous assertion for \(\FPD\) is false as well. Even simultaneous finiteness of both coefficient invariants does not imply finiteness of either power series invariant.
	\end{corollary}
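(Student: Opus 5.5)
Both assertions follow from the realizations in Theorem \ref{main-pairs}. No new construction is needed; the plan is to argue by contradiction in each case.

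For the flat statement, suppose such a \(g\) exists. Fix the coefficient value \(n=0\), so \(g(0)\) is a fixed natural number. Then apply Theorem \ref{main-pairs}(i) with \((n,d)=(0,g(0)+1)\). Here \(d\ge n+1\) holds and \(d\) is finite. This produces \(S_{0,d}=W_d\times P_0\) (for \(d\ge2\); when \(g(0)=0\) it is the field \(k\), whose pair is \((0,1)\)), with \(\FFD(S_{0,d})=0\) and \(\FFD(S_{0,d}[[x]])=g(0)+1>g(0)\). This contradicts the assumed bound. Alternatively, one may use \(S_{0,\infty}=B_\infty\), whose power series value is infinite by Proposition~\ref{infinite-witnesses}; this is even more direct, since no natural number bounds \(\infty\).

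For the projective statement, coefficient value \(0\) is not the convenient choice for finite pairs, because \((0,2)\) is excluded. So take \(n=1\) and \(d=g(1)+1\), increased if necessary to be at least \(2\). Theorem \ref{main-pairs}(ii) covers all \(d\ge n+1\) when \(n\ge1\). It gives \(R_{1,d}\) with \(\FPD(R_{1,d})=1\) and \(\FPD(R_{1,d}[[x]])=d>g(1)\). Alternatively, \(R_{0,\infty}=B_\infty\) has \(\FPD(B_\infty)=0\) but \(\FPD(B_\infty[[x]])=\infty\), which again refutes any \(g\).

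For the final sentence, use part (iii) of Theorem \ref{main-pairs} with \(n=0\), or with any \(n\). By Lemma~\ref{infinite-perfect}, the single ring \(B_\infty\) has \(\FFD(B_\infty)=\FPD(B_\infty)=0\), so both coefficient invariants are finite simultaneously. By Proposition~\ref{infinite-witnesses}, both \(\FFD(B_\infty[[x]])\) and \(\FPD(B_\infty[[x]])\) are infinite. The only point requiring care is that the hypothesis ``finite \(\FFD(S)\)'' is satisfied by the witnesses, which it is in every case above.
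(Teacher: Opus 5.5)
Your proposal is correct and matches the paper, whose proof just applies the family \(R_{n,\infty}=S_{n,\infty}\) for a fixed \(n\) (your alternative via \(B_\infty\)); it treats the final sentence the same way you do. Your primary route through the finite pairs \((0,g(0)+1)\) and \((1,\max\{g(1)+1,2\})\) is also valid, but it relies on the heavier coefficient-filtered seed, whereas the infinite family settles all three assertions at once.
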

	\begin{proof}
		For any fixed \(n\), use \eqref{infinite-pairs}.
	\end{proof}
	
	\section{Perfect coefficients and the missing value two}
	Perfectness alone does not imply nilpotence of the radical. The distinction is responsible for the following argument.
	
	\subsection{Separating annihilating ideals}
	\begin{lemma}\label{separation}
		Let \(I\) be a \(T\)-nilpotent, nonnilpotent ideal of a commutative ring. For each \(q\ge1\) there are a finitely generated ideal \(J\subseteq I\) and a nonnilpotent ideal \(K\subseteq I\) such that \(J^q\ne0\) and \(JK=0\). Consequently there are finitely generated ideals \(I_n\subseteq I\) with
		\[
		I_nI_m=0\ (n\ne m),\qquad I_n^{n+1}\ne0.
		\]
	\end{lemma}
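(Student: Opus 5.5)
The consequence is a straightforward induction once the first assertion is known, so most of the work goes into the first assertion.

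\textbf{Deriving the sequence \(I_n\).} Subideals of a \(T\)-nilpotent ideal are \(T\)-nilpotent, so the first assertion can be applied again to any nonnilpotent \(K\subseteq I\). Put \(K_0=I\). Given a nonnilpotent \(K_{n-1}\subseteq I\), apply the first assertion to \(K_{n-1}\) with \(q=n+1\). This gives a finitely generated \(I_n\subseteq K_{n-1}\) and a nonnilpotent \(K_n\subseteq K_{n-1}\) with \(I_n^{n+1}\ne0\) and \(I_nK_n=0\). For \(m<n\) we have \(I_n\subseteq K_{n-1}\subseteq K_m\), hence \(I_mI_n\subseteq I_mK_m=0\). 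This gives both displayed properties.

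\textbf{The first assertion, by contradiction.} Fix \(q\) and suppose that for every finitely generated \(J\subseteq I\) with \(J^q\ne0\), the ideal \(\Ann_I(J)\) is nilpotent. Such \(J\) exist: since \(I^q\ne0\), some product \(a_1\cdots a_q\) with \(a_i\in I\) is nonzero, and we may take \(J=(a_1,\ldots,a_q)\). The aim is to contradict \(T\)-nilpotence by building an infinite sequence in \(I\) with all initial products nonzero.

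The tool is a \emph{finite splitting principle}. Every \(a\in I\) is nilpotent, by \(T\)-nilpotence applied to the constant sequence. So \(I=\Ann_I(a^e)\) for some \(e\), and there is a finite chain
\[
0\subseteq\Ann_I(a)\subseteq\Ann_I(a^2)\subseteq\cdots\subseteq\Ann_I(a^e)=I.
\]
If every member of a chain \(K_1\subseteq\cdots\subseteq K_e\) of ideals in \(I\) satisfies \(a^{j}K_j=0\) and each \(K_j\) is nilpotent, then \(I\) itself is nilpotent. So nonnilpotence must enter at some stage of a chain built from annihilators of finitely generated ideals. Under the hypothesis, the stages where \(J^q\ne0\) are nilpotent. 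The surviving nonnilpotence must then sit in annihilators of ideals \(J'\) with \((J')^q=0\).

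This lets me proceed inductively. I would choose \(x_1\in I\) such that the quotient ring \(R/\Ann(x_1)\), through which \(I\) acts on \(x_1I\), still sees a nonnilpotent image of \(I\). I would then repeat inside \(x_1I\), then inside \(x_1x_2I\), and so on. The hypothesis on annihilators should guarantee at each step that \(x_1\cdots x_n I^N\ne0\) for all \(N\). This produces \(x_1,x_2,\ldots\in I\) with \(x_1\cdots x_n\ne0\) for every \(n\), contradicting \(T\)-nilpotence.

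\textbf{Main obstacle.} The hard step is making this inductive choice rigorous. Precisely, I must show that if \(L\subseteq I\) is an ideal with \(L I^N\ne0\) for all \(N\), then some \(x\in I\) satisfies \(xL I^N\ne0\) for all \(N\). The assumption that \(\Ann_I(J)\) is nilpotent whenever \(J^q\ne0\) is exactly what should rule out the alternative. The alternative is that every \(x\in I\) has \(xLI^{N_x}=0\) for some \(N_x\). Then one builds a finitely generated \(J\) from a nonvanishing product in \(L I^N\) for large \(N\), whose annihilator in \(I\) contains a nonnilpotent piece, contradicting the assumption.

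I expect the uniformity in \(N\) to be the delicate point. The exponents \(N_x\) depend on \(x\), and bounding them on a finitely generated ideal needs care. I would try first to handle it by working with the finitely many generators of \(J\) and taking the maximum of their exponents.
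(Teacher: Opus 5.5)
\textbf{There is a genuine gap: the first assertion is not proved.} Your derivation of the sequence $I_n$ from the first assertion is correct, and it is the same iteration the paper uses. For the first assertion, the inductive step you flag carries the entire content of the lemma, and the difficulty lies elsewhere than where you locate it.

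Uniformity is harmless. Suppose your process stalls at $y=x_1\cdots x_n$, so that every $x\in I$ has $yxI^{N_x}=0$. Then for any finitely generated $J\subseteq I$ with $J^q\ne0$, the largest exponent $M$ among its generators gives $yJI^M=0$.

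The real obstruction is that this is only the relation $JI^M=0$ in $R/\Ann(y)$, while your hypothesis concerns annihilators in $R$. Its two readings in $R$ are $J\cdot yI^M=0$ and $yJ\cdot I^M=0$, and neither is usable. The power $(yI^M)^k=y^kI^{kM}$ can vanish, and for $q\ge2$ so can $(yJ)^q=y^qJ^q$, because $y\in I$ is nilpotent once $n\ge1$.

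This does occur. Take $S\ltimes S$ with $S=B_\infty$ and $I=\mathfrak m\ltimes S$, which is $T$-nilpotent and nonnilpotent. The choice $x_1=(0,1)$ is admissible. The process stalls at $n=1$, since $(0,1)(b,u)=(0,b)$ and $b\mathfrak m^N=0$ for large $N$ by finite support. Both $yI^M$ and $yJ$ lie in $0\ltimes S$, which squares to zero.

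Your invariant ``$x_1\cdots x_nI^N\ne0$ for all $N$'' records nothing about $I\cap\Ann(y)$, so only the case $n=0$ goes through. The ``finite splitting principle'' does not help either: since $K_e=I$, its conclusion is a tautology.

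\textbf{What is missing.} You need a nilpotent kernel, and a way to intersect annihilators that are not powers of a single ideal. The paper supplies both as follows.
\begin{enumerate}[nosep]
\item It runs the same $T$-nilpotence-terminating process, starting from $a=1$ and replacing $a$ by $ab$, but it keeps $N=I\cap\Ann(a)$ nilpotent. The process halts at an $a$ for which $I\cap\Ann(ab)$ is nonnilpotent for every $b\in I$.
\item With $N^s=0$, the ideal $B=I/N$ of $R/N$ is nonnilpotent. Each $b+N$ has nonnilpotent annihilator $(I\cap\Ann(ab))/N$ in $B$. Taking a maximum of exponents is no longer available.
\item Instead, take $b_1,\dots,b_\ell$ with $(b_1,\dots,b_\ell)^t\ne0$. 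Either their common annihilator is nonnilpotent, or a shortest subfamily with nilpotent common annihilator gives nonnilpotent $U_1,V_1$ with $(U_1\cap V_1)^e=0$. Then $U_1^eV_1^e=0$, and one takes a finitely generated $U\subseteq U_1^e$ with $U^t\ne0$ and $V=V_1^e$.
\item With $t=sq$, the lifts satisfy $\widetilde U\widetilde V\subseteq N$. So $J=\widetilde U^s$ and $K=\widetilde V^s$ give $JK\subseteq N^s=0$, $J^q\ne0$, and $K$ nonnilpotent.
\end{enumerate}
Nilpotence of $N$ is exactly what makes this final lift work, and it is what your invariant lacks.
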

	\begin{proof}
		Choose \(a\) so that \(N=I\cap\Ann(a)\) is nilpotent but
		\(I\cap\Ann(ab)\) is nonnilpotent for every \(b\in I\). Start with \(a=1\). Whenever a \(b\) violates the latter condition, replace \(a\) by \(ab\). Each resulting \(a\) is nonzero, since otherwise its relative annihilator would be all of \(I\). An infinite procedure would give nonzero initial products from \(I\), contradicting \(T\)-nilpotence. Thus it terminates.
		
		Choose \(s\ge1\) with \(N^s=0\), and regard \(B=I/N\) as an ideal of the unital ring \(R/N\). It is nonnilpotent, and every element has nonnilpotent annihilator in \(B\):
		\[
		\Ann_B(b+N)=(I\cap\Ann(ab))/N.
		\]
		If this quotient were nilpotent, its numerator would be nilpotent as an extension by nilpotent \(N\).
		
		For every \(t\) we claim that \(B\) contains a finitely generated ambient ideal \(U\) and a nonnilpotent ideal \(V\) with \(U^t\ne0\) and \(UV=0\). Choose finitely many \(b_1,\ldots,b_\ell\) generating \(U_0\) with \(U_0^t\ne0\). If their common annihilator is nonnilpotent, the claim follows. Otherwise choose a shortest subfamily with nilpotent intersection of annihilators. Its length is at least two. The intersection \(U_1\) for all but the last element and the annihilator \(V_1\) of the last element are both nonnilpotent, while \(U_1\cap V_1\) is nilpotent. If \((U_1\cap V_1)^e=0\), then
		\(U_1^eV_1^e=0\). Both powers remain nonnilpotent. Choose finitely many elements of \(U_1^e\) generating an ideal with nonzero \(t\)th power and take \(V=V_1^e\).
		
		Apply the claim with \(t=sq\). Lift generators of \(U\) to an ideal \(\widetilde U\subseteq I\) and let \(\widetilde V\) be the preimage of \(V\). Then \(\widetilde U\widetilde V\subseteq N\). Set
		\(J=\widetilde U^s\) and \(K=\widetilde V^s\). We obtain \(JK=0\), \(J^q\ne0\), and \(K\) nonnilpotent. Repeating the construction inside the successive remaining nonnilpotent ideals, with \(q=n+1\), gives the sequence \(I_n\).
	\end{proof}
	
	\subsection{A Bass module with a monomorphism}
	\begin{lemma}\label{bass-mono}
		Suppose \(I_nI_m=0\) for \(n\ne m\) and \(I_n^{n+1}\ne0\). Put \(A=R[[x]]\) and \(D=A_x\). There is a \(D\)-module \(H\) with
		\[
		\fd_D H=\fd_AH=1,\qquad \pd_AH\le2.
		\]
	\end{lemma}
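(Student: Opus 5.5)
The plan is to imitate the construction of Section~5.2, working directly inside \(I=\bigsqcup\)-type data furnished by the ideals \(I_n\). I would take \(H\) to be a cyclic quotient of a one-element localization of \(A\). Its flat dimension then comes from a length-one Koszul resolution, and the projective bound comes from the telescope \eqref{telescope}.

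\emph{Step 1: a non-nilpotent series.} Since \(I_n^{n+1}\ne0\) and \(I_n\) is finitely generated, I choose elements \(a_n\in I_n\) and \(e_n\in I_n\) such that certain products are nonzero. For the powers of \(a_n\), the aim is \(a_n^{k}\ne0\) for \(k\le n\); if a single element does not suffice, I would replace it by a suitable generic combination, or else use products of generators. Put
\[
u=\sum_{n\ge1}a_nx^n\in A .
\]
Because \(a_na_m=0\) for \(n\ne m\), one gets \(u^k=\sum_n a_n^kx^{nk}\), and this is nonzero for every \(k\). Hence \(U=A_{xu}=D_u\ne0\), and Lemma~\ref{localization} gives \(\pd_AU\le1\).

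\emph{Step 2: a regular nonunit on \(U\).} Next I choose a second series \(v=\sum_n e_nx^{n}\) with \(e_n\in I_n\), mirroring the elements \(b_j\) of Proposition~\ref{infinite-regular}. I then aim for a uniform estimate
\[
u^{c}\Ann_A(v)=0\quad\text{for one fixed }c,
\]
in analogy with \eqref{infinite-uniform}. Such an estimate makes \(v\) regular on \(U\). For properness of \(vU\), I would argue that no power \(u^N\) lies in \(vA[x^{-1}]\), by looking at the block with large index \(n>N\) and using \(a_n^N\ne0\). The mutual annihilation \(I_nI_m=0\) is what allows the computation to be done one block at a time, as in the product ring \(D_\infty\). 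If plain elements of \(I_n\) do not give the required annihilator control, I would pass to \(R/\Ann(\cdot)\)-type quotients and use the \(s\)-power trick from the proof of Lemma~\ref{separation}.

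\emph{Step 3: the dimensions.} Set \(H=U/vU\ne0\). The sequence \(0\to U\xrightarrow{v}U\to H\to0\) gives \(\fd_DH\le1\). Moreover, \(\Tor_1^D(H,H)=\Ann_H(v)=H\ne0\), so \(\fd_DH=1\). By Lemma~\ref{localization} (with \(D\) and \(U\) localizations of \(A\)), \(\fd_AH=\fd_DH=1\). Since \(U=A_{xu}\) is a one-element localization, Lemma~\ref{localization} also gives \(\pd_AH\le\pd_UH+1=2\).

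\emph{Expected obstacle.} The main obstacle is Step 2. A perfect ring gives no polynomial-variable structure like the \(y_{j,i}\) of Section~5.2, so one must extract from the abstract data \(I_nI_m=0\), \(I_n^{n+1}\ne0\) a pair \((a_n,e_n)\) in each block. This pair must keep \(u\) non-nilpotent modulo \(v\) while bounding, uniformly in \(n\), the \(u\)-torsion exponent of \(\Ann(v)\). I would first try taking \(e_n\) to be a power of \(a_n\) adapted so that \(\Ann(e_n)\cap I_n\) is killed by \(a_n\). If that fails, I would fall back to a filtration argument on annihilators with exponent one.
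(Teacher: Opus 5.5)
\paragraph{The gap.} The gap is in Steps 1 and 2. It is not a matter of choosing elements more cleverly: under the lemma's hypotheses, a module \(U/vU\) with \(U\) a nonzero one-element localization and \(v\) a regular nonunit on \(U\) need not exist. The hypothesis \(I_n^{n+1}\ne0\) only says that some product of \(n+1\) possibly distinct elements of \(I_n\) is nonzero.

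Here is a counterexample to your plan. Take
\begin{itemize}
\item \(k=\F_2\) and \(C_n=k[y_{1,n},\ldots,y_{n+1,n}]/(y_{1,n}^2,\ldots,y_{n+1,n}^2)\), with maximal ideal \(\mathfrak m_n\);
\item \(R=k\cdot1+\bigoplus_n\mathfrak m_n\subseteq\prod_nC_n\) and \(I_n=\mathfrak m_n\).
\end{itemize}
The hypotheses hold, and \(R\) is perfect local with nonnilpotent radical, so this is the situation of Theorem~\ref{main-perfect}(ii). But every element of \(\mathfrak m=\bigoplus_n\mathfrak m_n\) squares to zero. In characteristic two, \(u^2=\sum_k c_k^2x^{2k}\) for \(u=\sum_k c_kx^k\), so every element of \(\mathfrak m[[x]]\) squares to zero as well. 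Consequences:
\begin{itemize}
\item Your \(u=\sum a_nx^n\) satisfies \(u^2=0\), so \(A_{xu}=0\) for every choice of \(a_n\in I_n\).
\item \(D=A_x\) is local with nil maximal ideal \(\mathfrak m[[x]][x^{-1}]\) and residue field \(k((x))\), so every element of \(D\) is a unit or nilpotent.
\item Hence every \(D_u\) is \(0\) or \(D\), and \(D\) has no regular nonunit.
\end{itemize}
So neither generic combinations nor your annihilator or filtration fallbacks can produce the \(v\) of Step 2. Step 2 is in any case only stated as an aim: no \(e_n\) and no uniform exponent \(c\) is produced. Polarization would rescue Step 1 in characteristic zero, but the lemma has no characteristic hypothesis. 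Your route does work for special coefficient rings, such as the one-variable blocks of Section~7 with \(U=A_f\) and \(v=(x^{i^2})_i\).

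\paragraph{The missing idea.} Stop iterating multiplication by a single element. The paper's construction is as follows.
\begin{enumerate}
\item Choose \(a_{1,n},\ldots,a_{n+1,n}\in I_n\) with nonzero product and put \(f_j=\sum_{n\ge j}a_{j,n}x^n\).
\item Replace \(A_u\) by the Bass module \(L=\varinjlim(A\xrightarrow{f_1}A\xrightarrow{f_2}A\to\cdots)\), with \(u_j=f_ju_{j+1}\). It is flat, \(\pd_AL\le1\) by the telescope, and \(x\) acts bijectively, so \(L\) is a flat \(D\)-module. Only products \(f_1\cdots f_r\) of distinct factors must be nonzero, which is exactly what the hypothesis gives.
\item Replace multiplication by \(v\) with \(\gamma(u_j)=g_ju_{j+1}\), where \(g_j=\sum_{n\ge j}a_{j,n}x^{n^2+n}\). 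This is well defined because \(g_jf_{j+1}=f_jg_{j+1}\) by mutual annihilation.
\item Injectivity of \(\gamma\) follows by multiplying a relation by \(a_{r+1,i}\) and cancelling the power of \(x\) blockwise.
\item Set \(H=\operatorname{Coker}\gamma\). The sequence \(0\to L\to L\to H\to0\) gives \(\fd_DH\le1\) and \(\pd_AH\le2\).
\item For nonflatness, show \(u_1\notin g_1L\). After multiplying by \(a_{r,n}\), the left side has \(x\)-order \((r-1)n\), while the right side has \(x\)-order at least \(n^2+(r-j+1)n-q\), which is larger for large \(n\). This gives \(\Tor_1^D(D/g_1D,H)\ne0\), and Lemma~\ref{localization} transfers the flat dimension to \(A\).
\end{enumerate}
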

	\begin{proof}
		Choose \(a_{1,n},\ldots,a_{n+1,n}\in I_n\) whose product is nonzero. Define
		\[
		f_j=\sum_{n\ge j}a_{j,n}x^n,\qquad
		g_j=\sum_{n\ge j}a_{j,n}x^{n^2+n}.
		\]
		These are legitimate formal series. Let
		\[
		L=\varinjlim(A\xrightarrow{f_1}A\xrightarrow{f_2}A\longrightarrow\cdots),
		\qquad u_j=f_ju_{j+1}.
		\]
		It is flat and the telescope gives \(\pd_A L\le1\). Multiplication by \(x\) is injective by flatness and surjective because every \(f_j\in xA\). Thus \(L\) is a \(D\)-module, flat over \(D\) as well. The identity
		\(g_jf_{j+1}=f_jg_{j+1}\) follows from annihilation of distinct ideals and defines
		\[
		\gamma:L\longrightarrow L,\qquad \gamma(u_j)=g_ju_{j+1}.
		\]
		
		To prove injectivity, suppose \(bg_jf_{j+1}\cdots f_r=0\) for some \(r\ge j\), as follows from \(\gamma(bu_j)=0\). Work in \(D\). For every \(i\ge r+1\), multiply by \(a_{r+1,i}\). All terms outside the \(i\)th ideal vanish, and cancellation of the invertible power of \(x\) gives
		\(b\,a_{j,i}\cdots a_{r+1,i}=0\). Summing the coefficientwise expressions now gives \(bf_j\cdots f_{r+1}=0\). This is precisely the finite-stage criterion for \(bu_j=0\) in \(L\).
		
		Set \(H=\operatorname{Coker}\gamma\). The exact sequence
		\(0\to L\xrightarrow{\gamma}L\to H\to0\) gives both flat upper bounds and \(\pd_AH\le2\). To show nonflatness, put \(a=g_1\). We have \(\gamma(u_1)=au_2\), but \(u_1\notin aL\). If \(u_1=abu_j\), choose \(j\ge2\) and a common stage \(r\ge j\). Then
		\[
		f_1\cdots f_{r-1}=g_1b f_j\cdots f_{r-1}.
		\]
		For sufficiently large \(n\ge r\), multiply by \(a_{r,n}\). The left side is the nonzero monomial
		\(a_{1,n}\cdots a_{r,n}x^{(r-1)n}\). If \(b\in x^{-q}A\), the right side has \(x\)-order at least
		\(n^2+(r-j+1)n-q\), which is greater for large \(n\). This is impossible.
		
		Thus \(\gamma:L/aL\to L/aL\) has a nonzero kernel. Tensoring the exact sequence over \(D\) gives \(\Tor_1^D(D/(a),H)\ne0\). Lemma \ref{localization} proves both equalities.
	\end{proof}
	
	\subsection{Detecting a third Ext group}
	The next argument uses a scalar ring that is an injective cogenerator over itself. Both a field and \(\mathbb Z/p^e\mathbb Z\) have this property.
	
	\begin{lemma}\label{third-ext}
		In the setting of Lemma \ref{bass-mono}, suppose \(R\) is an algebra over a field \(B_*\) or over \(B_*=\mathbb Z/p^e\mathbb Z\). There is an \(A\)-module \(V\) with \(\pd_A V=3\).
	\end{lemma}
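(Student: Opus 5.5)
We plan to build \(V\) from the module \(H\) of Lemma \ref{bass-mono} by a shift with a regular element, so that the dimension rises from \(2\) to \(3\). The only natural regular nonunit available is \(x\). Since \(H\) is already a \(D\)-module, \(x\) acts invertibly on it, so \(H\) itself cannot be reduced modulo \(x\). We therefore first prove \(\pd_A H=2\) exactly, and then pass to a module of the form \(V=H'\) with one extra regular element.

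Concretely, we first establish \(\pd_AH=2\). The upper bound is given by Lemma \ref{bass-mono}. For the lower bound we use \(\fd_AH=1\): it suffices to find an \(A\)-module \(W\) with \(\Ext^2_A(H,W)\ne0\). From \(0\to L\xrightarrow{\gamma}L\to H\to0\), we have
\[
\Ext^2_A(H,W)=\operatorname{Coker}\bigl(\gamma^*:\Ext^1_A(L,W)\to\Ext^1_A(L,W)\bigr),
\]
so we need \(\gamma^*\) to be non-surjective on \(\Ext^1\). Here we bring in the scalar ring \(B_*\), which is an injective cogenerator over itself. Take \(W=\operatorname{Hom}_{B_*}(X,B_*)\) for a suitable \(A\)-module \(X\). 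Adjunction then gives \(\Ext^i_A(Y,W)\cong\operatorname{Hom}_{B_*}(\Tor_i^A(Y,X),B_*)\). Thus \(\Ext^2_A(H,W)\) is the \(B_*\)-dual of \(\Tor_2^A(H,X)\), and \(\Ext^1_A(L,W)\) is dual to \(\Tor_1^A(L,X)=0\) because \(L\) is flat. That route only reaches degree one, so we instead dualize the \emph{telescope} presentation of \(L\). Writing \(L\) by \eqref{telescope}-type sequences \(0\to F\to F\to L\to0\), the group \(\Ext^1_A(L,W)\) becomes a \(\varprojlim^1\) of \(W\) along the maps \(f_j\). The condition \(\gamma^*\) non-surjective becomes the existence of a sequence in \(\prod W\) that is not a \(\gamma\)-image modulo the \(f\)-coboundaries. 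We would choose \(W\) (for instance a countable direct sum of copies of \(A/xA\)-type modules, or the \(B_*\)-dual of \(L/aL\)) so that the nonzero kernel of \(\gamma\) on \(L/aL\) from Lemma \ref{bass-mono} dualizes to a nonzero cokernel. This is the Mittag-Leffler-style computation already used in Proposition \ref{seed-values}.

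Finally, set \(V=H\otimes_A A/(y)\) for a regular element, or better, \(V=\) a module obtained from \(H\) over \(A[[\,]]\)-style shift. The cleanest choice is to replace \(A\) by using the element \(x\) in the opposite direction: take \(V=\Ext\)-detecting module \(C\otimes H\), where \(C=A/(1-\dots)\). Since \(x\) is invertible on \(H\), we would instead apply Lemma \ref{shift}-type spectral sequence reasoning with \(D\): \(\Ext_A^1(D,W)\ne0\) as in Proposition \ref{seed-values}. The spectral sequence
\[
\Ext^p_D(H,\Ext^q_A(D,W))\Rightarrow\Ext^{p+q}_A(H,W)
\]
then has the surviving corner term \((2,1)\) whenever \(\Ext^2_D(H,E)\ne0\) for \(E=\Ext^1_A(D,W)\). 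So the plan revises to: take \(V=H\), prove \(\Ext^2_D(H,E)\ne0\) using the \(B_*\)-duality above with \(E\) a dual module, and conclude \(\pd_AH\ge3\). But this contradicts \(\pd_AH\le2\). Hence \(V\) must be a modification, e.g.\ a module built like \(H\) but with \(\pd_DV=2\), so that the corner \((2,1)\) gives \(\pd_AV=3\). The main obstacle is producing a \(D\)-module of projective dimension two over \(D\), presumably a cokernel of \(\gamma\) on a non-flat module or a Koszul-type construction from two such endomorphisms, and verifying the nonvanishing of the relevant Ext via \(B_*\)-duality.
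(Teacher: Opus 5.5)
Your proposal has a genuine gap: it never produces \(V\). It ends by naming the construction of a suitable \(V\) as ``the main obstacle.'' The preliminary step \(\pd_AH=2\) is also only a plan, and both concrete choices of \(W\) you suggest for it fail:
\begin{itemize}
\item A module \(W\) killed by \(x\) gives \(\Ext^*_A(H,W)=0\). Indeed, \(x\) acts on this Ext group both invertibly (through \(H\)) and as zero (through \(W\)).
\item For \(W=\operatorname{Hom}_{B_*}(X,B_*)\), your own adjunction gives \(\Ext^2_A(H,W)\cong\operatorname{Hom}_{B_*}(\Tor_2^A(H,X),B_*)=0\), because \(\fd_AH=1\). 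The nonzero kernel of \(\gamma\) on \(L/aL\) computes a \(\Tor_1\), so its dual is a degree-one Ext group.
\end{itemize}
More fundamentally, \(B_*\)-duality can never be the detection mechanism. The lemma must hold for the ring of Theorem \ref{main-cardinal}, where \(\FFD(A)=1\). There the required \(V\) has \(\Tor_i^A(V,-)=0\) for \(i\ge2\), hence \(\Ext^3_A(V,\operatorname{Hom}_{B_*}(X,B_*))=0\) for every \(X\). The witness \(W\) must instead be something like a free module, for which \(\Ext^1_A(L,W)\) is a genuine cokernel of a map of products. Finally, a regular-element shift is unavailable, as you note, since \(x\) is invertible on \(D\)-modules. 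The corner-term argument through \(\Ext^1_A(D,W)\) only relocates the problem to an unconstructed \(D\)-module with a nonzero \(\Ext^2_D\).

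The missing idea is a cardinality argument of Osofsky type, carried by an auxiliary ring.
\begin{itemize}
\item \emph{The \(E\)-action.} The paper lets \(E=B_*^{\N_{>0}}/B_*^{(\N_{>0})}\) act on \(L\), hence on \(H\), by \(\varepsilon_\lambda(u_j)=h_{\lambda,j}u_{j+1}\), where \(h_{\lambda,j}=\sum_{n\ge j}\lambda_na_{j,n}x^n\). Mutual annihilation of the \(I_n\) makes this an action commuting with \(\gamma\).
\item \emph{The functional.} Take \(W=A^{(\N_{>0})}\) with basis \(z_j\), and \(C_2=\Ext^2_A(H,W)\). The injective cogenerator property gives functionals \(\ell_n:R\to B_*\) vanishing on \(O_n=\sum_{i\ne n}I_i\) and nonzero on \(p_n=a_{1,n}\cdots a_{n,n}\). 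For a cocycle \((b_j)\in\prod_jW\), apply \(\ell_n\) to the \(x^{n^2}\)-coefficient of the \(z_{n+1}\)-coordinate of \(\sum_{j\le n+1}f_1\cdots f_{j-1}b_j\). This gives an \(E\)-linear map \(\overline\rho:C_2\to E\) sending an explicit class \(c\) to \([(\delta_n)_n]\), with every \(\delta_n\ne0\). This already proves \(\pd_AH=2\).
\item \emph{Killing idempotents.} Take \(\mathfrak c\) orthogonal idempotents \(e_\alpha\in E\) from an almost disjoint family. Then \(P=\bigoplus_\alpha e_\alpha E\) is a projective ideal with \(E/P\) flat; set \(V=H\otimes_E E/P\).
\item \emph{Non-separation.} The sequence \(0\to\bigoplus_\alpha e_\alpha H\to H\to V\to0\) gives \(\pd_AV\le3\) and \(\Ext^3_A(V,W)=\operatorname{Coker}\bigl(C_2\to\prod_\alpha e_\alpha C_2\bigr)\). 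Choose a subfamily \(\mathcal H\) that no \(T\subseteq\N_{>0}\) separates from its complement; it exists because there are \(2^{\mathfrak c}\) subfamilies but only \(\mathfrak c\) sets \(T\). The element with coordinates \(e_\alpha c\) for \(\alpha\in\mathcal H\), and zero otherwise, is not in the image: \(\overline\rho\) of a preimage would produce such a separating \(T\).
\end{itemize}
None of these ingredients appears in your proposal.
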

	\begin{proof}
		Keep the notation of that lemma. Put
		\(p_n=a_{1,n}\cdots a_{n,n}\) and \(O_n=\sum_{i\ne n}I_i\).
		Because \(p_na_{n+1,n}\ne0\) and \(O_nI_n=0\), we have \(p_n\notin O_n\). The injective cogenerator property supplies \(B_*\)-linear maps
		\[
		\ell_n:R\longrightarrow B_*,
		\qquad \ell_n(O_n)=0,\qquad \ell_n(p_n)=\delta_n\ne0.
		\]
		For \(\mathbb Z/p^e\mathbb Z\), extend a nonzero map from the cyclic submodule generated by the image of \(p_n\) in \(R/O_n\). The scalar \(\delta_n\) need not be a unit.
		
		Let \(E=B_*^{\N_{>0}}/B_*^{(\N_{>0})}\). For \(\lambda=(\lambda_n)\) put
		\[
		h_{\lambda,j}=\sum_{n\ge j}\lambda_na_{j,n}x^n,\qquad
		\varepsilon_\lambda(u_j)=h_{\lambda,j}u_{j+1}.
		\]
		Mutual annihilation gives compatibility with the direct system. It also gives the multiplicative identity
		\(h_{\lambda,j}h_{\mu,j+1}=h_{\lambda\mu,j}f_{j+1}\).
		The constant sequence \(1\) acts as the identity, and finite-support sequences act as zero at a sufficiently late stage. Hence \(L\) is an \((A,E)\)-bimodule. These actions commute with \(\gamma\), so \(H\) inherits the right \(E\)-action.
		
		Take \(W=A^{(\N_{>0})}\) with basis \(z_j\). The telescope computes
		\[
		C=\Ext_A^1(L,W)=\operatorname{Coker}\left[
		\prod_{j\ge1}W\xrightarrow{\Delta}\prod_{j\ge1}W\right],
		\qquad \Delta(c)_j=c_j-f_jc_{j+1}.
		\]
		Set \(F_0=1\), \(F_j=f_1\cdots f_j\). For \(b=(b_j)\) form
		\[
		w_n(b)=\sum_{j=1}^{n+1}F_{j-1}b_j.
		\]
		Extract the coefficient of \(x^{n^2}\) from its \(z_{n+1}\) coordinate and apply \(\ell_n\); call the result \(r_n(b)\). If \(b=\Delta(c)\), telescoping gives
		\(w_n(b)=c_1-F_{n+1}c_{n+2}\).
		The first term has zero \(z_{n+1}\) coordinate for all sufficiently large \(n\), and \(F_{n+1}\) has order at least \((n+1)^2\). Thus \(r_n(b)\) is eventually zero. We obtain a well-defined map
		\[
		\rho:C\longrightarrow E,\qquad
		\rho([b])=[(r_n(b))_n].
		\]
		
		We verify its two needed compatibilities. A chain lift of \(\varepsilon_\lambda\) sends the \(j\)th basis vector in each telescope term to \(h_{\lambda,j}\) times the \((j+1)\)st. Its action on \(C\) is therefore
		\([b_j]\mapsto[h_{\lambda,j}b_{j+1}]\).
		After applying \(\ell_n\), all terms in \(O_n\) disappear; the remaining coefficient is \(\lambda_n\) times the old coefficient, except for the contribution of \(b_1\) and a term divisible by \(F_{n+1}\). The former is eventually zero in coordinate \(z_{n+1}\) and the latter has excessive degree. Consequently \(\rho\) is \(E\)-linear.
		
		A chain lift of \(\gamma\) similarly acts by \([b_j]\mapsto[g_jb_{j+1}]\). After \(\ell_n\) is applied, the surviving terms in \(F_{j-1}g_jb_{j+1}\) have order at least \(n^2+jn>n^2\). Thus \(\rho\gamma^*=0\).
		
		Since \(\pd_A L\le1\), the exact sequence defining \(H\) gives
		\[
		C_2=\Ext_A^2(H,W)=\operatorname{Coker}(\gamma^*:C\to C).
		\]
		The map \(\rho\) descends to an \(E\)-linear map \(\overline\rho:C_2\to E\). Let \(c\in C_2\) be the class represented by \(b_j=z_j\). Only \(F_nz_{n+1}\) contributes to the selected coordinate and degree, so
		\begin{equation}\label{delta-class}
			\overline\rho(c)=[(\delta_n)_n],\qquad \delta_n\ne0\text{ for all }n.
		\end{equation}
		In particular \(\pd_A H=2\).
		
		Choose an almost disjoint family of infinite subsets \((S_\alpha)_{\alpha<\mathfrak c}\) of \(\N_{>0}\), with \(\mathfrak c=2^{\aleph_0}\). For example, encode the finite prefixes of infinite binary sequences in a countable set. Their characteristic idempotents \(e_\alpha\in E\) are nonzero and pairwise orthogonal. There is a subset \(\mathcal H\subseteq\mathfrak c\) for which no \(T\subseteq\N_{>0}\) satisfies
		\begin{equation}\label{noseparator}
			S_\alpha\setminus T\text{ finite }(\alpha\in\mathcal H),
			\qquad S_\alpha\cap T\text{ finite }(\alpha\notin\mathcal H).
		\end{equation}
		Indeed there are only \(\mathfrak c\) choices of \(T\), each separating at most one partition, whereas there are \(2^{\mathfrak c}\) partitions.
		
		The ideal \(P=\bigoplus_{\alpha<\mathfrak c}e_\alpha E\) is projective. The module \(E/P\) is flat: it is the filtered colimit of the projective quotients by ideals generated by finitely many orthogonal idempotents. No regularity assumption on \(E\) is required. Set \(V=H\otimes_E(E/P)\). Flatness gives
		\[
		0\longrightarrow\bigoplus_\alpha e_\alpha H
		\longrightarrow H\longrightarrow V\longrightarrow0.
		\]
		Its first two terms have \(A\)-projective dimension at most two, so \(\pd_A V\le3\), and the Ext sequence gives
		\[
		\Ext_A^3(V,W)=
		\operatorname{Coker}\left[C_2\longrightarrow\prod_\alpha e_\alpha C_2\right].
		\]
		Consider the vector with coordinate \(e_\alpha c\) for \(\alpha\in\mathcal H\) and zero otherwise. If it came from \(c'\in C_2\), represent \(\overline\rho(c')\) by \((d_n)\). Equation \eqref{delta-class} would imply that
		\(T=\{n:d_n=\delta_n\}\) satisfies \eqref{noseparator}. On an unselected \(S_\alpha\), \(d_n\) is eventually zero and hence eventually different from nonzero \(\delta_n\); on a selected one it is eventually equal. This contradiction proves \(\Ext_A^3(V,W)\ne0\).
	\end{proof}
	
	\begin{proof}[Proof of Theorem \ref{main-perfect}]
		First suppose \(R\) is local perfect, with maximal ideal \(\mathfrak m\). If \(\mathfrak m\) is nilpotent, \(\mathfrak m[[x]]\) is nilpotent and \(A_x\) is a nilpotent extension of the field \((R/\mathfrak m)((x))\), hence perfect. Every \(A/x^nA\) is perfect. Theorem \ref{flat-formula} gives \(\FFD(A)=1\), and \eqref{almost-perfect} gives \(\FPD(A)=1\) and \(\FFD(A_x)=0\).
		
		If \(\mathfrak m\) is not nilpotent, Lemma \ref{separation} produces the ideals needed in Lemmas \ref{bass-mono} and \ref{third-ext}. If the residue characteristic is zero, every nonzero integer is a unit in \(R\), so \(R\) is a \(\Q\)-algebra. If it is \(p>0\), then \(p\in\mathfrak m\) and \(T\)-nilpotence applied to the constant sequence \(p\) gives \(p^e=0\) for some \(e\). Thus \(R\) is a \(\mathbb Z/p^e\mathbb Z\)-algebra. Lemma \ref{third-ext} gives the dimension-three module in either case, and Lemma \ref{bass-mono} gives \(\FFD(A_x)\ge1\).
		
		A general commutative perfect ring is a finite product of local perfect rings. Formal and Laurent series rings respect this decomposition, and \eqref{product} applies. A nonnilpotent radical has a local factor with nonnilpotent radical; extend its witness module by zero on the other factors.
	\end{proof}
	
	\begin{corollary}\label{no-two}
		There is no commutative ring \(R\) satisfying
		\(\FPD(R)=0\) and \(\FPD(R[[x]])=2\).
	\end{corollary}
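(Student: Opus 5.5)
The corollary follows directly from Bass's characterization and Theorem~\ref{main-perfect}. Suppose \(R\) is a commutative ring with \(\FPD(R)=0\). Then \(R\) is nonzero: for the zero ring the supremum defining \(\FPD\) runs over the zero module only, and its value in either convention differs from the situation we need. If one prefers, we simply note that the zero ring has zero power series ring, so \(\FPD(R[[x]])\ne2\) trivially. By Bass's theorem, recalled before Theorem~\ref{main-perfect}, \(\FPD(R)=0\) means exactly that \(R\) is perfect.

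We now split into the two cases of Theorem~\ref{main-perfect}. If \(\Jac(R)\) is nilpotent, part (i) gives \(\FPD(R[[x]])=1\ne2\). If \(\Jac(R)\) is not nilpotent, part (ii) supplies an \(R[[x]]\)-module of projective dimension exactly three. Such a module has finite projective dimension, so it is admissible in the supremum defining \(\FPD\). Hence \(\FPD(R[[x]])\ge3\), and again \(\FPD(R[[x]])\ne2\).

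In either case the value \(2\) is excluded. There is no real obstacle here. The only points to verify are that the witness of part (ii) has finite projective dimension, which it does since its dimension is exactly three, and that the nonzero hypothesis of Theorem~\ref{main-perfect} is met, which is the degenerate case handled above. All of the substantive work lies in Lemmas~\ref{separation}, \ref{bass-mono} and \ref{third-ext}, which are already available. In particular, this shows that the excluded pair \((0,2)\) in Theorem~\ref{main-pairs}(ii) is genuinely impossible rather than merely unconstructed.
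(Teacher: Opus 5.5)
Your proposal is correct and is the paper's own proof: Bass's theorem turns \(\FPD(R)=0\) into perfectness, and the nilpotent/nonnilpotent dichotomy of Theorem~\ref{main-perfect} then gives either \(\FPD(R[[x]])=1\) or \(\FPD(R[[x]])\ge3\). Your first sentence about the zero ring is muddled, but your fallback argument is enough on its own, since \(0[[x]]=0\) gives \(\FPD(R[[x]])=\FPD(R)=0\ne2\).
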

	\begin{proof}
		The coefficient equality is equivalent to perfectness, so Theorem \ref{main-perfect} applies.
	\end{proof}
	
	\section{A countable perfect ring with cardinal-dependent values}
	The block construction also illustrates the difference between flat and projective values. In \eqref{infinite-blocks}, the increasing number of variables produces arbitrarily long regular sequences and forces both target values to be infinite. We now retain one variable in each block. For this different ring the flat target value is one, while the exact projective value depends on the stated cardinal assumption. We prove Theorem \ref{main-cardinal}. Take \(k=\F_2\), let
	\[
	C_i=k[t_i]/(t_i^{i+1}),\qquad \mathfrak m_i=t_iC_i,\qquad
	R=k\cdot1+\bigoplus_{i\ge1}\mathfrak m_i\subseteq\prod_{i\ge1}C_i.
	\]
	The ring is countable and local. Its radical is \(T\)-nilpotent: the first element of a radical sequence has finite support, and sufficiently many factors vanish in all those nilpotent components. Hence \(R\) is perfect and both coefficient invariants vanish. Its radical is not nilpotent because \(t_i^i\ne0\).
	
	Put \(A=R[[x]]\), \(D=A_x\), \(P=\prod_i C_i[[x]]\), and \(Q=P_x\), with \(x\) diagonal. Component maps embed \(A\) in \(P\) and \(D\) in \(Q\).
	
	\subsection{The global flat bound}
	\begin{lemma}\label{product-flat}
		The ring \(P\) is coherent, \(\FFD(P)=1\), and \(\FFD(Q)\le1\).
	\end{lemma}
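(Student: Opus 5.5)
Each \(C_i[[x]]=k[[x]][t_i]/(t_i^{i+1})\) is a Noetherian local ring of dimension one, and it is Cohen--Macaulay. The plan is to exploit this through the product structure, in four steps.

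\emph{Coherence of \(P\).} A product of rings is coherent exactly when every product of flat modules is flat, or equivalently when finitely generated ideals are finitely presented with a uniform bound. For a finitely generated ideal of \(P\) on \(g\) generators, the coordinate ideals need presentations whose numbers of relations are bounded independently of \(i\). The key structural point is that \(C_i[[x]]\) is free of rank \(i+1\) over the discrete valuation ring \(k[[x]]\), but the number of relations should instead be controlled by \(g\) alone. I would try to use the explicit form of ideals in \(k[[x]][t]/(t^{i+1})\): this is a quotient of the two-dimensional regular local ring \(k[[x,t]]\) by the principal ideal \(t^{i+1}\). Syzygies of \(g\) elements over such a ring are reflexive-like modules over a hypersurface of dimension one. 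Such syzygies are maximal Cohen--Macaulay modules, generated by at most a bound depending on \(g\) via the multiplicity estimate in \(k[[x,t]]\), where second syzygies are free of rank at most \(g-1\). Concretely, lift to \(k[[x,t]]\) (global dimension two). A syzygy module over the quotient is then generated by the syzygies over \(k[[x,t]]\) together with \(t^{i+1}\) times basis vectors. This gives at most \(2g\) generators, uniformly in \(i\). A finitely generated ideal in the product is the product of coordinate ideals, as noted in Proposition \ref{infinite-regular}. So the uniform relation bound assembles into a finite presentation, and \(P\) is coherent.

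\emph{The bound \(\FFD(P)\le1\).} For \(\FFD(P)\le1\) I would use Lemma \ref{flat-local} together with the uniform description. A cleaner route is to show that over coherent \(P\), if \(\fd M<\infty\) then \(\Omega^1 M\) is flat. Each coordinate ring is local Cohen--Macaulay of dimension one, so \eqref{local-classical} gives \(\FFD=1\) coordinatewise. For a \(P\)-module \(M\) of finite flat dimension, I would test flatness of \(L=\Omega^1_PM\) against finitely presented \(P\)-modules \(Q\). By coherence, \(Q\) has a resolution by finite free modules, and these are products of coordinate resolutions. Then \(\Tor^P_j(Q,L)\) can be computed using the uniform finite presentations. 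I expect the main obstacle to lie here, since a \(P\)-module is not a product of coordinate modules. My first attempt would be dimension shifting along \(x\). The element \(x\) is regular on \(P\), and \(P/xP=\prod_i C_i\) is a product of Artinian rings, hence a perfect-like ring. The plan is to show that \(\prod_i C_i\) is coherent and self-injective, so that it has \(\FFD=0\). Alternatively, \(P_x=Q\) should be a product-type von Neumann-regular-modulo-nilpotent ring with \(\FFD(Q)=0\). Theorem \ref{flat-formula} would then give \(\FFD(P)=\max\{0+1,\FFD(Q)\}\).

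\emph{The bound for \(Q\).} For \(Q\), I would note that \(Q\) is a localization, so Lemma \ref{localization} yields \(\FFD(Q)\le\FFD(P)\le1\) once the \(P\) bound is known. The lower bound \(\FFD(P)\ge1\) follows from Lemma \ref{shift} applied to \(P/xP\).

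\emph{Where the difficulty lies.} The crux is therefore \(\FFD(\prod_i C_i)=0\) and a matching statement for \(Q\), or directly \(\FFD(Q)\le1\). For \(\prod_i C_i\), each \(C_i\) is a quasi-Frobenius ring, and the relevant ideals are uniformly finitely presented. I would prove that \(\prod_i C_i\) is coherent and self-FP-injective, which forces every module of finite flat dimension to be flat.
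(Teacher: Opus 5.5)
\textbf{There is a genuine gap: you never obtain \(\FFD(Q)\le1\) independently, so your route to \(\FFD(P)\le1\) does not close.}

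Your reduction along \(x\) is sound in outline. Theorem \ref{flat-formula} gives \(\FFD(P)=\max\{\FFD(\prod_iC_i)+1,\FFD(Q)\}\), and the first term is workable. The ring \(\prod_iC_i\) is self-injective, being a product of self-injective rings, and it is coherent by uniform generator bounds. Over a coherent self-FP-injective ring every finitely presented module is reflexive, so it embeds in a finite free module, and shifting \(\Tor_1\) upward gives \(\FFD=0\).

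The problem is the second term. Your only argument for \(\FFD(Q)\le1\) is Lemma \ref{localization} applied to \(\FFD(P)\le1\), which is exactly what the formula was supposed to produce. That is circular. The alternative you float, \(\FFD(Q)=0\), is false. The element \(h=(x^i)_i\) is regular in \(Q\), and it is not a unit, because its inverse in \(\prod_iC_i((x))\) has unbounded pole orders. Lemma \ref{shift} then gives \(\fd_Q(Q/hQ)=1\). Coordinatewise facts such as \(\FFD(C_i[[x]])=1\) cannot fill the hole, for the reason you note yourself: \(P\)- and \(Q\)-modules are not products of coordinate modules.

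The missing idea is a bound that is uniform over \emph{all} modules, obtained from a regular ambient product.
\begin{itemize}
\item Put \(U=\prod_ik[[t_i,x]]\) and \(\delta=(t_i^{i+1})_i\). Then \(\delta\) is regular and \(P=U/\delta U\).
\item The kernel of any map \(U^a\to U^b\) is \(\prod_iK_i\). Each \(K_i\) is a second syzygy over a two-dimensional regular local ring, hence free of rank \(r_i\le a\).
\item With \(e_j\) the idempotent of \(\{i:r_i\ge j\}\), one gets \(\prod_iK_i\cong\bigoplus_{j\le a}e_jU\), which is finitely generated projective.
\item Hence \(U\) is coherent, finitely presented \(U\)-modules have projective dimension at most two, and \(\wgd U\le2\) by filtered colimits.
\item Lemma \ref{shift} along \(\delta\) now gives \(\fd_PM+1=\fd_UM\le2\) for every nonzero \(P\)-module \(M\) of finite flat dimension, whatever its shape.
\item Coherence of \(P\) follows because it is a principal quotient of a coherent ring. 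Only after this does Lemma \ref{localization} legitimately give \(\FFD(Q)\le1\).
\end{itemize}
Your coherence step already lifts to \(k[[x,t]]\); you used that lift only to count generators, not for the dimension bound.

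\textbf{A smaller error in your coherence step.} The generating set you describe is wrong. Take \(g=1\) and \(f=t^i\): the syzygies over \(k[[x,t]]\), together with \(t^{i+1}\) times a basis vector, all vanish in the quotient, yet \(\Ann(t^i)=(t)\). The correct generators are the images of the syzygies of \((f_1,\dots,f_g,t^{i+1})\) over \(k[[x,t]]\). That syzygy module is free of rank at most \(g\), so your uniform bound survives.
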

	\begin{proof}
		Let \(U_i=k[[t_i,x]]\), \(U=\prod_iU_i\), and \(\delta=(t_i^{i+1})_i\). Then \(\delta\) is regular and \(P=U/\delta U\). A map \(U^a\to U^b\) has kernel \(\prod_iK_i\). Each \(K_i\) is a finite second syzygy over regular local \(U_i\), and hence free of rank \(r_i\le a\). If \(e_j\) is the characteristic idempotent of \(\{i:r_i\ge j\}\), coordinatewise choices of bases give
		\(\prod_iK_i\cong\bigoplus_{j=1}^ae_jU\).
		Thus every such kernel is finite projective. It follows that \(U\) is coherent, all its finitely presented modules have projective dimension at most two, and \(\wgd U\le2\), since arbitrary modules are filtered colimits of finitely presented modules.
		
		A principal quotient of a coherent ring is coherent. For a nonzero \(P\)-module \(M\) of finite flat dimension, Lemma \ref{shift} gives
		\(\fd_PM+1=\fd_UM\le2\). This proves the upper bound, and the regular nonunit \(x\) gives equality. Lemma \ref{localization} gives the claim for \(Q\).
	\end{proof}
	
	Let \(K_0=k((x))\) and \(J=\mathfrak m[[x]][1/x]\subseteq Q\). Elements of \(J\) are radical component series whose \(x\)-orders tend to infinity with the component index, and with a common lower bound. Multiplication by an element of \(Q\) preserves this property, so \(J\) is a \(Q\)-ideal. We have a Cartesian square
	\begin{equation}\label{pullback}
		D=K_0+J=Q\times_{Q/J}K_0.
	\end{equation}
	The diagonal map embeds \(K_0\) in \(Q/J\). The latter ring need not be a field.
	
	\begin{lemma}\label{pullback-tor}
		If \(\fd_D M<\infty\), then
		\(\Tor_j^D(K_0,M)=\Tor_j^D(Q,M)=0\) for all \(j>0\).
	\end{lemma}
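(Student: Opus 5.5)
The plan is to imitate the proof of Theorem \ref{idealization}: write down short exact sequences of $D$-modules whose terms are $D$, $K_0$, $Q$, $J$ and $Q/J$, obtain recurrences between the Tor groups, and then induct downward from the top degree $h=\fd_DM<\infty$, where everything vanishes. Write
\[
T_j=\Tor_j^D(K_0,M),\qquad U_j=\Tor_j^D(Q,M),\qquad X_j=\Tor_j^D(J,M),\qquad Y_j=\Tor_j^D(Q/J,M).
\]
The sequences available are the following.
\begin{enumerate}[label=\textup{(\alph*)},nosep]
\item $0\to J\to D\to K_0\to0$, which gives $T_{j+1}\cong X_j$ for $j\ge1$.
\item The Milnor sequence $0\to D\to Q\oplus K_0\to Q/J\to0$ coming from the Cartesian square \eqref{pullback}, which gives $U_j\oplus T_j\cong Y_j$ for $j\ge2$.
\item $0\to J\to Q\to Q/J\to0$, which gives a long exact sequence
\[
\cdots\to X_j\to U_j\to Y_j\to X_{j-1}\to\cdots.
\]
\end{enumerate}

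The second ingredient is that $Q/J$ is a $K_0$-module, through the diagonal embedding of $K_0$ into $Q/J$. Since $K_0$ is a field, $Q/J$ is a direct sum of copies of $K_0$, and so $Y_j\cong T_j^{(\kappa)}$ with $\kappa\ge1$. Similarly, any $D$-module that is a $K_0$-vector space has Tor groups that are direct sums of copies of $T_j$.

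Combining these, the identification in (b) reads $U_j\oplus T_j\cong T_j^{(\kappa)}$. In (c), the map $U_j\to Y_j$ is the $Q$-component of the isomorphism in (b), so it is split injective with cokernel $T_j$. The sequence (c) therefore breaks into short pieces
\[
0\to T_j\to X_{j-1}\cong T_j\to U_{j-1}\to\cdots,
\]
where the identification $X_{j-1}\cong T_j$ comes from (a).

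With these pieces I would run the downward induction. Assume $T_i=U_i=0$ for all $i>n$, which holds for $n=h$. First obtain $T_n=0$, and then use (b) to get $U_n$ as a summand of $T_n^{(\kappa)}=0$, hence $U_n=0$. The low degrees $j=1,2$ need separate care, because (b) and (a) only hold in the stated ranges. There I would use the low-degree tail of the Milnor sequence and the fact that $D\to K_0$ is surjective with kernel $J$, so that $\Tor_1^D(K_0,M)$ is $\ker(J\otimes_DM\to M)$. A convenient alternative is to replace $M$ by its syzygies: positive Tor vanishing for every syzygy (each of finite flat dimension) reduces everything to showing $T_1=U_1=0$ for modules of flat dimension at most one. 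That case can be handled directly, since the flat module appearing is $\Omega M$.

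I expect the main obstacle to be extracting $T_n=0$ from the top-degree sequence, since the pieces above only give a map $T_{n+1}\cong X_n\to U_n$ and an injection of $T_n$ into $X_{n-1}$. What I would try first is to exploit that $J$ is an ideal of $Q$: then $X_j=\Tor_j^D(J,M)$ carries a $Q$-module structure, and I would compare it with $J\otimes_QU_j$. If $J$ is flat, or at least Tor-independent, over $Q$ in the relevant sense, this identifies $X_j$ with $J\otimes_QU_j$, so that $U_n=0$ forces $X_n=0$, hence $T_{n+1}=0$, and the induction closes.

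This flatness of $J$ over $Q$ is where the explicit form of $J$ must enter. $J$ consists of radical component series whose orders tend to infinity with the component index and have a common lower bound. I would try to exhibit $J$ as a filtered colimit of the $Q$-ideals $\prod_i t_iC_i[[x]][1/x]$ cut off by order conditions, and to check Tor-independence componentwise using the coherence of $P$ from Lemma \ref{product-flat}. If that fails, the fallback is to use $\FFD(Q)\le1$ from Lemma \ref{product-flat} to bound the $Q$-module $U_j$ and to force the vanishing directly in degrees at least $2$.
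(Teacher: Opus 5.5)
Your sequences (a)--(c) and the identification \(Q/J\cong K_0^{(\kappa)}\) as \(D\)-modules are correct. However, the step you single out, obtaining \(T_n=0\), is never proved, and the proposed repair fails.

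\(J\) is not flat over \(Q\). With \(e_1\in P\) the first-block idempotent, \(J\) has the \(Q\)-direct summand \(e_1J=\mathfrak m_1((x))=t_1K_0\cong K_0\). This summand is not flat over \(e_1Q=K_0[t_1]/(t_1^2)\). Even granting \(X_j\cong J\otimes_QU_j\), the indices do not close. Since \(T_n\cong X_{n-1}\), you need \(X_{n-1}=0\), which requires \(U_{n-1}=0\). Your chain only recovers \(T_{n+1}=0\), which is already the induction hypothesis, and \(U_n=0\) via (b) itself presupposes \(T_n=0\).

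The fallback through \(\FFD(Q)\le1\) is not an argument. Nothing gives \(U_j\) finite flat dimension over \(Q\), and a flat-dimension bound would not force \(U_j=0\) anyway. The syzygy reduction is also invalid. Dimension shifting turns \(T_j(M)\), \(j\ge2\), into \(T_1\) of a syzygy, but it cannot reduce \(T_1(M)\) for \(\fd_DM=h\) to modules of flat dimension at most one. Nor is that case ``direct'': it needs the missing idea below.

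The missing idea is the summand above, viewed over \(D\). Because \(t_1^2=0\) in \(C_1\), one has \(t_1J=0\), hence \(\Ann_D(t_1)=J\) and \(t_1D\cong D/J\cong K_0\). Moreover \(J=e_1J\oplus(1-e_1)J\) as \(D\)-modules, with \(e_1J\cong K_0\). Your remark that \(K_0\)-vector spaces have Tor groups that are sums of copies of \(T_j\) therefore applies to a summand of \(J\). Sequence (a) becomes \(T_{j+1}\cong T_j\oplus\Tor_j^D((1-e_1)J,M)\) for \(j\ge1\). Descending from degrees above \(\fd_DM\) gives \(T_j=0\) and \(X_j=0\) for all \(j\ge1\); this is the paper's argument.

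Once this is in hand, your Milnor sequence (b) does finish the proof. For \(j\ge1\) it embeds \(U_j\oplus T_j\) into \(T_j^{(\kappa)}=0\); in degree one this uses \(\Tor_1^D(D,M)=0\). This is a legitimate alternative to the paper's sequence \(0\to J^{(\Lambda\setminus\{0\})}\to D^{(\Lambda)}\to Q\to0\), built from a lifted \(K_0\)-basis of \(Q/J\).
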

	\begin{proof}
		The first component element \(t_1\) satisfies \(t_1D\cong K_0\) and \(\Ann_D(t_1)=J\). As a \(D\)-module, \(J=K_0\oplus L\), separating the first block from the others. The sequence \(0\to J\to D\to K_0\to0\) gives
		\[
		\Tor_{j+1}^D(K_0,M)\cong
		\Tor_j^D(K_0,M)\oplus\Tor_j^D(L,M)\quad(j\ge1).
		\]
		Descending from high degrees proves the vanishing for \(K_0\) and \(J\) in every positive degree. In particular \(J\otimes_DM\to M\) is injective.
		
		Choose a \(K_0\)-basis \((\overline q_\lambda)_{\lambda\in\Lambda}\) of \(Q/J\), with \(\overline q_0=1\), and lift it to \(Q\), taking \(q_0=1\). There is an exact sequence
		\[
		0\to J^{(\Lambda\setminus\{0\})}
		\xrightarrow{\alpha}D^{(\Lambda)}\longrightarrow Q\to0,
		\quad
		\alpha((j_\lambda))=
		\left(-\sum_{\lambda\ne0}j_\lambda q_\lambda,(j_\lambda)_{\lambda\ne0}\right).
		\]
		Surjectivity follows by reducing modulo \(J\); its kernel follows from linear independence of the chosen basis. Tensoring with \(M\), the nonzero-index coordinates of \(\alpha\otimes M\) are the injective maps \(J\otimes_DM\to M\). This proves \(\Tor_1^D(Q,M)=0\). Higher degrees vanish by the same sequence and the already established vanishing for \(J\).
	\end{proof}
	
	We use the pullback bound \cite[Theorem 3.2]{WKX}: for a Cartesian square \(H=T\times_{T/I}F\) with \(F\) a field, if \(\FFD(T)\le n\) and
	\(\Tor_1^H(T,M)=0\) whenever \(\fd_HM\le n+1\), then \(\FFD(H)\le n\).
	This theorem does not require \(T/I\) to be a field. Equivalently, its proof applies the flat-module patching criterion to a syzygy whose extensions to \(T\) and \(F\) are flat; the stated Tor hypothesis supplies exact base change. Lemmas \ref{product-flat} and \ref{pullback-tor}, with \eqref{pullback}, give
	\[
	\FFD(D)\le1,\qquad
	\FFD(A)=\max\{1,\FFD(D)\}=1.
	\]
	
	\subsection{Regular quotients and independent idempotents}
	Set \(f=\sum_{i\ge1}t_ix^i\in A\). Its \(i\)th component is \(t_ix^i\), and \(fP\subseteq A\): the coefficient of \(x^n\) uses only the finitely many blocks \(i\le n\). Consequently
	\[
	U=A_f=P_f=D_f.
	\]
	The last equality follows because \(f\in xA\) makes \(x\) invertible in \(A_f\). Take \(g=(x^{i^2})_i\in P\). It is regular on \(P\) and on \(U\). Put \(V=U/gU\).
	
	For \(S\subseteq\N_{>0}\) let \(e_S\) be the characteristic idempotent in \(P\). Its image in \(V\) is zero exactly when \(S\) is finite. The forward implication for finite \(S\) follows because a power of \(f\) kills its finitely many components. Conversely, vanishing means \(f^Ne_S\in gP\) for some \(N\). If \(S\) is infinite, choose \(i\in S\), \(i>N\). The \(i\)th component is the nonzero element \(t_i^Nx^{iN}\), which is not divisible by \(x^{i^2}\). This contradiction proves the assertion. In particular \(V\ne0\), and
	\(\mathcal P(\N)/\mathrm{Fin}\) embeds in its idempotent algebra.
	
	Here is an independent family of size \(\mathfrak c\). On the countable set
	\(\Omega=\{(r,F):r\ge1,\ F\subseteq\{0,1\}^r\}\),
	associate to an infinite binary sequence \(\eta\) the set
	\(S_\eta=\{(r,F):\eta|_r\in F\}\).
	For finitely many distinct sequences, their prefixes are distinct for every sufficiently large \(r\). Choosing \(F\) then realizes each prescribed Boolean pattern for infinitely many \(r\). Identifying \(\Omega\) with \(\N\) gives the required independent family modulo finite sets.
	
	Osofsky's cardinality theorem \cite[Theorem A]{Osofsky} states that an ideal generated by \(\aleph_m\) independent commuting idempotents has projective dimension \(m\), for finite \(m\). Independence means every finite Boolean cell is nonzero; the ambient ring need not be von Neumann regular. If \(\aleph_m\le\mathfrak c\), choose such a subfamily in \(V\), let \(I_m\) be its generated ideal, and let \(N_m=V/I_m\). Then
	\[
	\pd_V I_m=m,\qquad \pd_VN_m=m+1.
	\]
	Lemma \ref{shift}, applied to \(g\), gives \(\pd_UN_m=m+2\). Lemma \ref{localization}, applied to \(f\), therefore yields
	\begin{equation}\label{cardinal-lower}
		m+2\le\pd_AN_m\le m+3<\infty,\qquad
		m+2\le\pd_DN_m\le m+3<\infty.
	\end{equation}
	These are witnesses of finite projective dimension, not merely a lower bound for global dimension. Also the regular quotient \(V\) has \(\fd_UV=1\), hence \(\fd_DV=1\). Therefore \(\FFD(D)=1\).
	
	\subsection{The cardinal upper bound}
	\begin{proof}[Proof of Theorem \ref{main-cardinal}]
		Only the projective upper bounds remain. The countability of \(R\) gives
		\(|A|=|D|=\mathfrak c\). Simson's theorem \cite{Simson} says that over a ring of cardinality at most \(\aleph_m\), every flat module has projective dimension at most \(m+1\). There is no restriction on the cardinality of that module.
		
		Suppose \(\mathfrak c=\aleph_m\), \(1\le m<\infty\). For \(T=A\) or \(D\), we have \(\FFD(T)=1\). If \(\pd_TM<\infty\), the kernel \(L\) of a free surjection onto \(M\) is flat. Hence
		\(\pd_TM\le\pd_TL+1\le m+2\).
		Together with \eqref{cardinal-lower}, this proves
		\(\FPD(A)=\FPD(D)=m+2\). In fact the chosen \(N_m\) has that exact dimension over both rings.
		
		If \(\mathfrak c\ge\aleph_\omega\), the modules \(N_m\) exist for every finite \(m\ge1\), and their finite projective dimensions are unbounded. Both invariants are infinite.
	\end{proof}
	
	\begin{remark}
		Theorem \ref{main-perfect} excludes \((0,2)\) without any set-theoretic assumption. Theorem \ref{main-cardinal} realizes \((0,3)\) under CH, and realizes \((0,m+2)\) when \(\mathfrak c=\aleph_m\). It does not assert that every finite \((0,d)\), \(d\ge3\), is realizable in a fixed background satisfying ZFC. The positive-coefficient finite projective pairs, all the flat pairs, and the simultaneous pairs \((n,\infty)\) in Theorem \ref{main-pairs} are unconditional. In particular, the latter include an explicit ZFC example of a perfect coefficient ring with both power series invariants infinite.
	\end{remark}

\end{document}